\renewcommand{\mathbf}{\mathbold}
\numberwithin{equation}{section}
\newcommand{\ps}[1]{\mkern-.25mu\mathbin{\left(\mkern-3.5mu\left({#1}\right)\mkern-3.5mu\right)}}
\newcommand{\Z}{\mathbb{Z}}
\newcommand{\N}{\mathbb{N}}
\newcommand{\R}{\mathbb{R}}
\newcommand{\E}{\mathbb{E}}
\newcommand{\F}{\mathbb{F}}
\newcommand{\bX}{\mathbf{X}}
\newcommand{\bs}{\mathbf{s}}
\newcommand{\bzero}{\mathbf{0}}
\newcommand{\bone}{\mathbf{1}}
\newcommand{\cP}{\mathcal{P}}
\newcommand{\cF}{\mathcal{F}}
\newcommand{\cI}{\mathcal{I}}
\newcommand{\cD}{\mathcal{D}}
\newcommand{\cW}{\mathcal{W}}
\newcommand{\cN}{\mathcal{N}}
\newcommand{\cM}{\mathcal{M}}
\newcommand{\cA}{\mathcal{A}}
\newcommand{\cB}{\mathcal{B}}
\newcommand{\Lip}{{\operatorname{Lip}}}
\newcommand{\var}{{\operatorname{var}}}
\newcommand{\Mat}{{\operatorname{Mat}}}
\newcommand{\LL}{{\operatorname{L}}}
\newcommand{\Sh}{{\operatorname{Sh}}}
\newcommand{\End}{{\operatorname{End}}}
\newcommand{\Id}{{\operatorname{Id}}}
\newcommand{\cube}{Q}
\newcommand{\tp}{{\operatorname{tp}}}
\newcommand{\tTheta}{\widetilde{\Theta}}
\newcommand{\andd}{\quad \text{and} \quad}
\newcommand{\p}[1]{\mathbf{#1}} 
\newcommand{\pp}[1]{\mathcal{#1}} 
\newcommand{\drive}{U} 
\newcommand{\sol}{V} 
\newcommand{\ssol}{\widetilde{\sol}} 
\newcommand{\GRP}{G\Omega} 
\newcommand{\known}[1]{{#1}^{\text{kn}}} 
\newcommand{\unknown}[1]{{#1}^{\text{unkn}}} 
\newcommand{\trueknown}{\known{\theta}_0} 
\newcommand{\samplespace}{\mathcal{S}} 
\newcommand{\sample}{\sigma} 
\newcommand*{\pmat}[1]{\begin{pmatrix}#1\end{pmatrix}}
\newcommand{\ext}[1]{\overline{#1}}
\DeclareMathOperator{\tens}{tens}
\newcommand{\param}{\theta}
\DeclareMathOperator{\J}{J}
\newtheorem{counter}{Counter}[section]
\newtheorem{lemma}[counter]{Lemma}
\newtheorem{proposition}[counter]{Proposition}
\newtheorem{theorem}[counter]{Theorem}
\newtheorem{cor}[counter]{Corollary}
\theoremstyle{definition}
\newtheorem{definition}[counter]{Definition}
\newtheorem{example}[counter]{Example}
\newtheorem{experiment}[counter]{Experiment}
\newtheorem{remark}[counter]{Remark}
\newtheorem{notation}[counter]{Notation}
\title{Path-Dependent SDEs: Solutions and Parameter Estimation}
\author{Pardis Semnani}
\email{psemnani@math.ubc.ca}
\address{Department of Mathematics, University of British Columbia, Vancouver, BC, Canada V6T 1Z2}
\author{Vincent Guan}
\email{vguan23@math.ubc.ca}
\address{Department of Mathematics, University of British Columbia, Vancouver, BC, Canada V6T 1Z2}
\author{Elina Robeva}
\email{erobeva@math.ubc.ca}
\address{Department of Mathematics, University of British Columbia, Vancouver, BC, Canada V6T 1Z2}
\author{Darrick Lee}
\email{darrick.lee@ed.ac.uk}
\address{School of Mathematics and Maxwell Institute, University of Edinburgh, Edinburgh EH9 3FD, Scotland}
\date{\today}
\begin{document}
\maketitle
\vspace{-20pt}
\begin{center}
\begin{small}
    \today
\end{small}
\end{center}

\vspace{10pt}

\begin{abstract}
We develop a consistent method for estimating the parameters of a rich class of path-dependent SDEs, called \emph{signature SDEs}, which can model general path-dependent phenomena. Path signatures are iterated integrals of a given path with the property that any sufficiently nice function of the path can be approximated by a linear functional of its signatures. This is why we model the drift and diffusion of our signature SDE as linear functions of path signatures. We provide conditions that ensure the existence and uniqueness of solutions to a general signature SDE. 
We then introduce  the Expected Signature Matching Method (ESMM) for linear signature SDEs, which enables inference of the signature-dependent drift and diffusion coefficients from observed trajectories. Furthermore, we prove that  ESMM is consistent: given sufficiently many samples and Picard iterations used by the method, the parameters estimated by the ESMM approach the true parameter with arbitrary precision. Finally, we demonstrate on a variety of empirical simulations that our ESMM accurately infers the drift and diffusion parameters from observed trajectories.
While parameter estimation is often restricted by the need for a suitable parametric model, this work makes progress toward a completely general framework for SDE parameter estimation, using signature terms to model arbitrary path-independent and path-dependent processes.\medskip\\
\textbf{Keywords:} Path signatures, Rough paths, Path-dependent, Stochastic differential equations, Consistent estimator\\
\textbf{2020 Mathematics Subject Classification:} 60L20, 60L90, 62M99,  62M09 
\end{abstract}

\section{Introduction}

Stochastic differential equations (SDEs) capture both random  and deterministic dynamics, making them powerful tools for modeling processes in a broad range of fields, including biology, chemistry, physics, economics, and computer vision \cite{pavliotis2014stochastic}. Due to the need for analytical and computational tractability, the SDE is commonly assumed to be Markovian, and therefore \textit{path-independent}, i.e.,  the process' evolution depends only on its current state. Formally, a \textit{path-independent} SDE obeys the form
\begin{align}
     dY_t = a(Y_t)dt + b(Y_t)dW_t,
    \label{eq: generic_path_independent_SDE}
\end{align}
where $Y_t$ is an $m$-dimensional process and $W_t$ is an $n$-dimensional Brownian motion.
Due to its importance in applications, estimating the \emph{drift} $a$ and \emph{diffusion} $b$ of an unknown SDE from sample observations has been an active area of research for several decades, and has been especially well-studied for \textit{path-independent} SDEs. 
Once a parametric drift and diffusion family is specified,  a number of approaches, including maximum likelihood estimation (MLE) \cite{pedersen1995new, beskos2006exact, pavliotis2014stochastic, kou2012multiresolution, sharrock2021parameter}, the method of moments  \cite{ogaki199317, nielsen2000parameter, papavasiliou_parameter_2011}  and kernel density estimation \cite{nickl2020nonparametric, brogat2024learning} may be used to infer the underlying parameters. \medskip

However, real-world processes may be highly non-Markovian, since the process evolution may exhibit delays, cyclic patterns, dependence on trends, and other historical dependencies. Examples include biological processes (e.g., lung function \cite{batzel2000stability}), psychological behavior \cite{gregson2014time}, population dynamics \cite{erneux2009applied}, mechanics \cite{kalmar2001subcritical}, and climate dynamics \cite{keane2017climate}. Non-Markovian dynamics are commonly known as \textit{path-dependent} processes, and can thus be modeled by \textit{path-dependent} SDEs, whose drift and diffusion coefficients may depend on the entire history of the path,
\begin{align}
    dY_t = a(Y_{[0,t]})dt + b(Y_{[0,t]})dW_t.
    \label{eq: generic_path_dependent_SDE}
\end{align}
A simple construction of a path-dependent SDE is given in \cite{manten2024signature}. First, consider the three-dimensional linear SDE,
\begin{align}
    dY_t^{(1)} &= dW_t^{(1)}\\
    dY_t^{(2)} &= Y_t^{(3)}dt + dW_t^{(2)} \label{eq:intro_example}\\
    dY_t^{(3)} &= Y_t^{(1)}dt.
\end{align}
Although this three-dimensional SDE is Markovian, we note that if the last component $Y_t^{(3)}$ is not observed, then the second component becomes dependent on the path history of the first component, yielding
\begin{align}
    dY_t^{(1)} &= dW_t^{(1)}\\
    dY_t^{(2)}&=\left(\int_0^t Y_s^{(1)} ds\right)dt + dW_t^{(2)}.
\end{align}
The above is a simple example of a distributed-delay SDE, which is a well-studied class of path-dependent SDEs. In particular, distributed-delay SDEs assume an integro-differential form, such that the drift and diffusion parameters integrate a kernel over a time interval:
\begin{align}
    dY_t = a\left(Y_t, \int_{t-\tau}^t K(t, s, Y_s)ds \right)dt + b\left(Y_t, \int_{t-\tau}^t K(t, s, Y_s)ds \right)dW_t,
    \label{eq: integro_differential_SDE}
\end{align}
where $\tau$ is the time lag, and $K$ is a bounded kernel from a parametric family. Conditions for the existence and uniqueness of solutions, as well as numerical approximations, have been studied under various assumptions \cite{buckwar2005euler,rene2017mean}. Although distributed delay SDEs are natural examples, path-dependence encompasses a much broader class of dependencies beyond those captured by delay kernels. Furthermore, existing parameter estimation methods for the drift and diffusion are typically confined to narrowly defined models and rely on strong assumptions about the delay kernel \cite{bishwal2007parameter,torkamani2013parameter} \medskip.

In this article, we develop a consistent method for estimating the parameters of a rich class of path-dependent SDEs, called \emph{signature SDEs}, which can model general path-dependent phenomena.
The \emph{path signature} of a sufficiently smooth path $Y: [0,T] \to \R^n$ is derived from an infinite sequence of iterated integrals
\begin{equation}\label{eq:intro_signature_def}
    \p{Y}_t \coloneqq (\p{Y}_t^k)_{k=0}^\infty \quad \text{where} \quad \p{Y}^k_t \coloneqq \int_{0 \leq t_1 < \ldots < t_k \leq t} dY_{t_1} \otimes \ldots \otimes dY_{t_k} \in (\R^n)^{\otimes k}.
\end{equation}
The path signature $S(Y) \coloneqq \p{Y}_T$ completely characterizes the path $Y$ up to \emph{tree-like equivalence}~\cite{chen_integration_1958,hambly_uniqueness_2010,boedihardjo_signature_2016}, i.e., it identifies the path up to reparametrizations and retracings. In fact, by appending the time parametrization and considering the signature of the augmented path $\overline{Y}_t = (t,Y_t)$, the signature becomes injective~\cite[Section 4.3]{chevyrev_signature_2022}. We leverage two fundamental properties of the signature~\cite{chevyrev_characteristic_2016, chevyrev_signature_2022,cuchiero2023global}:
\begin{itemize}
    \item \textbf{universal}: linear functionals of the path signature can approximate sufficiently nice functions on the space of paths; and
    \item \textbf{characteristic}: the expected signature $\mu \mapsto \E_{Y \sim \mu}[S(\overline{Y})]$, where $\mu$ is a probability measure on the space of paths, is injective. 
\end{itemize}

Linear signature SDEs, introduced in~\cite{cuchiero2023signature}, are Stratonovich SDEs of the form
\begin{align} \label{eq:pdsde_compact}
    dY_t = A_\theta(\p{Y}_t) dt + B_\theta(\p{Y}_t) \circ dW_t,
\end{align}
where the drift $A_\theta$ and diffusion $B_\theta$ are linear functionals of the signature, parametrized by $\theta \in \Theta$. As the signature faithfully represents the history of the path, these are path-dependent SDEs. Moreover, by the universality of the signature, general continuous path-dependent drift and diffusion terms can be approximated by $A_\theta$ and $B_\theta$. Our main contributions are twofold. \medskip

\textbf{Existence and Uniqueness of Solutions to Signature SDEs in Short Time Intervals.} 
The question of existence and uniqueness of solutions to signature SDEs is not addressed in~\cite{cuchiero2023signature}, and our first contribution is to fill this gap in the restricted setting of short time intervals. Our approach uses the theory of \emph{rough paths}~\cite{lyons_differential_2007,friz_multidimensional_2010,friz_course_2020}. While the integrals in~\eqref{eq:intro_signature_def} are well-defined if the path $Y$ is sufficiently smooth, such integrals may not exist if $Y$ is highly irregular. Instead, a \emph{rough path} is a path $Y$, together with \emph{postulated} signature terms $\p{Y}^{k}$, for $k \leq p$, where $p$ is determined by the regularity of the path. Equipped with this additional data, integrals can be defined with respect to rough paths; in fact, the \emph{Universal Limit Theorem} determines the existence and uniqueness of solutions for path-independent \emph{rough} differential equations~\cite{lyons_differential_2007}. \medskip

After presenting preliminaries in \Cref{sec:preliminaries}, in~\Cref{sec:general_existence_uniqueness} we consider a general signature controlled differential equation 
\begin{align} 
    dY_t = F(\p{Y}_t) d\p{X}_t 
\end{align}  along with its corresponding lifted equation  
\begin{align}\label{eq:intro_signature_cde} d\p{Y}_t = \p{F}(\p{Y}_t) d\p{X}_t,
\end{align}
where $\p{X}$ is a deterministic rough driving signal. 
The latter equation extends the state space of the former equation by directly incorporating the signatures $\p{Y}$ of the solution $Y$ into the differential equation. This procedure is analogous to the example in~\eqref{eq:intro_example}, where an SDE becomes path-independent by considering a larger system. Then, by restricting our attention to sufficiently bounded driving noise, we can apply the Universal Limit Theorem~\cite{lyons_differential_2007} to obtain existence and uniqueness for the solutions to these equations (see~\Cref{thm: universal limit theorem 2}), as stated informally here.

\begin{theorem}{(Informal)}
    If $\p{X}$ is an appropriately bounded rough path, then there exists a unique solution $\p{Y}$ to~\eqref{eq:intro_signature_cde}, and there exists a sequence of Picard iterations $\{\p{Y}(r)\}_{r\in \mathbb{Z}_{\geq 0}}$, where $\p{Y} = \lim_{r \to \infty} \p{Y}(r)$. 
\end{theorem}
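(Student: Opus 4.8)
The plan is to obtain this statement as an instance of Lyons' Universal Limit Theorem~\cite{lyons_differential_2007} applied to the lifted rough differential equation~\eqref{eq:intro_signature_cde}. Recall the form of that theorem: for an RDE $d\p{Y}_t = \p{F}(\p{Y}_t)\,d\p{X}_t$ driven by a geometric $p$-rough path $\p{X}$, if the vector field $\p{F}$ lies in $\Lip(\gamma)$ for some $\gamma > p$, then there is a unique solution $\p{Y}$, and it arises as the limit of the Picard scheme $\p{Y}(r+1) = \p{Y}(0) + \int \p{F}(\p{Y}(s))\,d\p{X}_s$, where $\p{Y}(0)$ is the constant path at the initial value. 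So everything reduces to (i) placing the lifted equation in the right functional-analytic framework, and (ii) verifying that $\p{F}$ satisfies the $\Lip(\gamma)$ hypothesis on the relevant domain; the phrase ``appropriately bounded'' is precisely the quantitative condition on $\|\p{X}\|_{p\text{-var}}$ needed to close the fixed-point argument.

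First I would fix the state space. Since $\p{Y}_t$ takes values in (a truncation of, or a suitable closure of) the tensor algebra, I would identify the Banach space carrying $\p{Y}$, the space of controlled rough paths over $\p{X}$ in which the solution is sought, and the rough-integral solution map $\p{Y} \mapsto \p{Y}(0) + \int \p{F}(\p{Y})\,d\p{X}$ on that space. Next I would check the regularity of $\p{F}$: because $\p{F}$ is assembled from the linear functionals defining the drift and diffusion together with the algebraic operations propagating the signature of the solution, its coordinates are polynomial in the signature coordinates, hence smooth with all derivatives bounded on bounded sets; thus $\p{F} \in \Lip(\gamma)$ for every $\gamma$ once restricted to a bounded domain $D$. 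Finally I would exploit the smallness of $\|\p{X}\|_{p\text{-var}}$ — equivalently, restriction to a short time interval — to guarantee simultaneously that (a) the solution started at $\p{Y}(0)$ stays inside $D$, so the $\Lip(\gamma)$ bounds apply, and (b) the solution map is a contraction on the corresponding ball of controlled rough paths. The Banach fixed point theorem then yields a unique $\p{Y}$, and since its construction is exactly the sequence of iterates of this contraction, we get $\p{Y} = \lim_{r \to \infty} \p{Y}(r)$.

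The main obstacle is the circular dependence in the last step. The tensor algebra is infinite-dimensional and $\p{F}$ is only locally Lipschitz, so there is no global $\Lip(\gamma)$ bound to fall back on; instead, the bound on $\p{X}$ must be chosen so that the a priori estimate it forces on $\p{Y}$ confines $\p{Y}$ to a domain $D$ on which the local Lipschitz constants of $\p{F}$ are controlled, while those same constants re-enter the threshold that $\|\p{X}\|_{p\text{-var}}$ must meet for the contraction estimate to hold. Exhibiting an explicit quantitative bound on $\|\p{X}\|_{p\text{-var}}$ for which both requirements hold at once is the technical heart of the argument; with that threshold in place, the rest follows the classical rough-path proof essentially verbatim.
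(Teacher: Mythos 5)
Your overall plan---reduce to Lyons' Universal Limit Theorem after taming the unbounded lifted vector field---matches the paper's intent, but at the decisive technical step you take a different route, and the difficulty you flag at the end is left unresolved. You propose to work with $\p{F}$ as a \emph{locally} $\Lip(\gamma)$ field and run a localized contraction argument, choosing the bound on $\|\p{X}\|_{p\text{-var}}$ so that the solution stays in a domain $D$ on which the local Lipschitz constants are controlled, while those constants feed back into the admissible size of $\p{X}$. You correctly identify this circularity as ``the technical heart'' but do not break it. The paper dissolves it with a different device: the Whitney--Stein extension theorem (\Cref{thm:modify_lipgamma}) is used to replace the unbounded factor $\tens$ by a \emph{globally} $\Lip(\gamma)$ function $\tens_M$ agreeing with it on the ball $K_M$, so the modified field $\p{F}_M$ is globally $\Lip(\gamma)$ and \Cref{thm:universal_limit} applies verbatim, with no smallness condition on $\p{X}$, yielding existence, uniqueness, and Picard convergence for the modified equation for \emph{every} driving signal. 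The boundedness of $\p{X}$ enters only afterwards and plays a purely a posteriori role: it guarantees (quantitatively, via the functions $\cN,\cM$ of \Cref{prop: N(theta) and M(theta)}) that the $M$-solution and all its Picard iterates never leave $K_M$, where $\p{F}_M = \p{F}$, so the modified solution is a solution of the original equation. This decoupling is what your sketch is missing; without it, you would need to carry out the coupled choice of $D$ and the threshold on $\|\p{X}\|_{p\text{-var}}$ by hand.

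A second gap: you treat \eqref{eq:intro_signature_cde} as an ordinary RDE in a big Banach space and stop there, but for the solution $\p{Y}$ to deserve the name ``signature of $Y$'' one must check that the higher tensor levels of the solution of the lifted equation really are the iterated integrals of its first level (equivalently, that $\p{Y}$ is multiplicative and a geometric rough path in $\GRP_p(\sol)$). This is not automatic from the fixed-point argument and is exactly the content of \Cref{prop: signatures of the lift}, which again requires the solution to remain in $K_M$; note the paper's remark that this consistency genuinely \emph{fails} for the finite Picard iterates $\p{Y}(r)$, which are only bounded $p$-variation paths. Your proposal should either prove this consistency or restrict the claim to the lifted equation in the abstract Banach space.
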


Next, in~\Cref{sec:parametrized_signature_sde}, we specialize to the case of the linear signature SDE in~\eqref{eq:pdsde_compact} and show that for all parameters $\theta \in \Theta$, there exist uniform bounds on the driving noise such that both the solution $\p{Y}$ and the Picard estimates $\p{Y}(r)$ are well-defined.

\medskip
\textbf{Consistent Parameter Estimation.}
Our second contribution, in~\Cref{sec:esmm}, is to develop a consistent method to estimate the parameters $\theta$ of the signature SDE in~\eqref{eq:pdsde_compact}. We leverage the characteristic property of the signature to extend the \emph{Expected Signature Matching Method}, introduced in~\cite{papavasiliou_parameter_2011} for path independent SDEs with polynomial vector fields, to linear signature SDEs. We begin in~\Cref{thm: polynomial equations} by expressing the Picard iterations $\p{Y}_\theta(r)$ of the differential equation in~\eqref{eq:pdsde_compact}  as a polynomial in $\theta$, with coefficients determined by the signature $\p{X}$ of an admissible deterministic driving signal. Then, given a stochastic driving signal, the restricted expectation of the $r$th Picard iteration is a polynomial in $\theta$, given by
\begin{align}
    P_r(\theta) \coloneqq \E[\p{Y}_\theta(r) \cdot \chi_{E_\xi}],
\end{align}
where $\chi_{E_\xi}$ is the indicator function on a subset $E_\xi\subset \samplespace$ of the sample space with appropriately bounded sample driving signals. Now given a collection $\{ {Y}_{\theta_0}(\sample_i)\}_{i=1}^N$ of solutions sampled from~\eqref{eq:pdsde_compact} with respect to an unknown parameter $\theta_0 \in \Theta$, we can solve a system of polynomial equations
\begin{align} \label{eq:intro_esmm}
    P_r(\theta) = \frac{1}{N}\sum_{i=1}^N {\p{Y}}_{\theta_0}(\sample_i) \cdot \chi_{E_\xi}(\sample_i),
\end{align}
to estimate $\theta_0$. Following~\cite{papavasiliou_parameter_2011}, we call this method the \emph{Expected Signature Matching Method}. Our main result shows that this method is a consistent estimator for linear signature SDEs, stated with explicit rates, and proved in~\Cref{thm:consistency}.

\begin{theorem} {(Informal)}\label{thm:intro_consistency}
    Let $P(\theta) \coloneqq \E[\p{Y}_\theta \cdot \chi_{E_\xi}]$ denote the restricted expected signature of the solution. Suppose $P$ is differentiable at $\theta_0 \in \Theta$ with an invertible Jacobian. Then, almost surely, for all $\varepsilon>0$,  
    there exist $N_0, r_0 \in \N$ such that for $N \geq N_0$ and $r \geq r_0$, the polynomial system~\eqref{eq:intro_esmm} has a solution $\theta_{r,N} \in B_\varepsilon(\theta_0)$.
\end{theorem}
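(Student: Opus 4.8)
The plan is to produce $\theta_{r,N}$ as a Brouwer fixed point, after establishing two convergence facts: that the restricted expected signatures $P_r$ of the $r$th Picard iterates converge to $P$ uniformly near $\theta_0$, and that the empirical average on the right of~\eqref{eq:intro_esmm} converges almost surely to $P(\theta_0)$. As in the hypothesis we regard $\theta\mapsto P(\theta)$ as a map into $\R^{d}$, $d=\dim\Theta$ (projecting, if necessary, onto the finitely many signature coordinates for which $J\coloneqq DP(\theta_0)$ is square and invertible; we suppress this projection and keep writing $P$, $P_r$, and $\hat M_N\coloneqq \frac1N\sum_{i=1}^N \p{Y}_{\theta_0}(\sample_i)\,\chi_{E_\xi}(\sample_i)$ for the projected objects). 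Precomposing the target with $J^{-1}$, we may assume $J=\id$. Note that by~\Cref{thm: polynomial equations} each $P_r$ is a polynomial in $\theta$, hence continuous.

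\emph{Step 1: uniform convergence $P_r\to P$ near $\theta_0$, and continuity of $P$.} By~\Cref{thm: universal limit theorem 2} the Picard iterates $\p{Y}_\theta(r)$ converge to the solution signature $\p{Y}_\theta$ for every admissible deterministic driving signal, and the uniform estimates of~\Cref{sec:parametrized_signature_sde} make the corresponding Picard contraction constant $\kappa<1$ independent of $\theta\in\Theta$ and of the driving signal, provided the latter lies in the bounded regime defining $E_\xi$. Hence on $E_\xi$ one has a bound of the form $\|\p{Y}_\theta(r)-\p{Y}_\theta\|\le C\kappa^{r}$ with $C$ uniform in $\sample\in E_\xi$ and in $\theta$ over a neighbourhood $\bar B_\delta(\theta_0)$. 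Since $\chi_{E_\xi}$ vanishes off $E_\xi$, dominated convergence (applied coordinatewise over the retained coordinates) gives $\sup_{\theta\in \bar B_\delta(\theta_0)}\|P_r(\theta)-P(\theta)\|\le C\kappa^{r}\to 0$. The same uniform boundedness, together with continuity of the It\^o--Lyons solution map in the (linear-in-$\theta$) vector fields, shows after another dominated-convergence argument that $P$ is continuous on $\bar B_\delta(\theta_0)$.

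\emph{Step 2: strong law, and the fixed point.} The vectors $\p{Y}_{\theta_0}(\sample_i)\,\chi_{E_\xi}(\sample_i)$ are i.i.d.\ copies of $Z\coloneqq \p{Y}_{\theta_0}\,\chi_{E_\xi}$, which is bounded (each retained coordinate is controlled on $E_\xi$ by the estimates of~\Cref{sec:parametrized_signature_sde}, and $Z$ vanishes off $E_\xi$), hence integrable with $\E[Z]=P(\theta_0)$; by the strong law of large numbers there is a probability-one event $\RP_0$ on which $\hat M_N\to P(\theta_0)$. Fix $\omega\in\RP_0$ and $\varepsilon>0$; shrinking $\varepsilon$ we may assume $\varepsilon\le\delta$ and, by differentiability of $P$ at $\theta_0$ with $J=\id$, that $\|P(\theta)-P(\theta_0)-(\theta-\theta_0)\|\le\tfrac13\|\theta-\theta_0\|$ on $\bar B_\varepsilon(\theta_0)$. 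Choose $r_0$ with $C\kappa^{r_0}<\varepsilon/3$ and $N_0=N_0(\omega,\varepsilon)$ with $\|\hat M_N-P(\theta_0)\|<\varepsilon/3$ for $N\ge N_0$. For $r\ge r_0$, $N\ge N_0$ set
\[
\Phi_{r,N}(\theta)\coloneqq \theta_0-\bigl(P(\theta)-P(\theta_0)-(\theta-\theta_0)\bigr)-\bigl(P_r(\theta)-P(\theta)\bigr)-\bigl(P(\theta_0)-\hat M_N\bigr).
\]
Then $\Phi_{r,N}$ is continuous on $\bar B_\varepsilon(\theta_0)$ and the three bounds give $\|\Phi_{r,N}(\theta)-\theta_0\|<\varepsilon$, so $\Phi_{r,N}$ maps $\bar B_\varepsilon(\theta_0)$ into $B_\varepsilon(\theta_0)$. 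By Brouwer's theorem it has a fixed point $\theta_{r,N}\in B_\varepsilon(\theta_0)$, and $\Phi_{r,N}(\theta_{r,N})=\theta_{r,N}$ rearranges to $P_r(\theta_{r,N})=\hat M_N$, i.e.\ $\theta_{r,N}$ solves~\eqref{eq:intro_esmm}. As $\RP_0$ has probability one, this proves the claim; here $r_0$ is explicit in $\varepsilon$ via the geometric Picard rate $\kappa$, and $N_0$ is controlled by the fluctuations of $\hat M_N$ (e.g.\ via the law of the iterated logarithm), which is what yields the explicit rates of~\Cref{thm:consistency}.

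\emph{Main obstacle.} The analytic heart is Step 1: turning the pathwise Picard convergence and the uniform-in-noise, uniform-in-$\theta$ estimates of~\Cref{sec:parametrized_signature_sde} into uniform convergence of the \emph{restricted expectations} $P_r\to P$ and into continuity of $P$, which requires a careful justification of interchanging the limit with the restricted expectation over $E_\xi$ (and, for the quantitative version, tracking the dependence of $C$ and $\kappa$ on the neighbourhood size). Once this and the elementary strong law are in place, existence of $\theta_{r,N}$ is a soft consequence of Brouwer's theorem, and sharpening $r_0,N_0$ into explicit rates needs only the already-available geometric Picard decay and a concentration estimate for the bounded i.i.d.\ average.
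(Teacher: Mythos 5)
Your proposal is correct, and its analytic core coincides with the paper's: your Step 1 is exactly \Cref{cor: uniform convergence}, which derives the uniform geometric bound $\sup_{\theta}\|P_r(\theta)-P(\theta)\|\lesssim \rho^{-r}$ from the uniform-in-$\theta$ Picard estimates of \Cref{prop: N(theta) and M(theta)} restricted to $E_\xi$ (the paper bounds the expectation directly by the pathwise bound, so no dominated-convergence argument is even needed, and it obtains continuity of $P$ as a uniform limit of the polynomials $P_r$ rather than via continuity of the solution map in the vector field), and your strong-law step is verbatim the paper's. Where you genuinely diverge is the topological existence step: the paper invokes the Poincar\'e--Miranda theorem, first showing (\Cref{lem:existence_of_miranda}) that differentiability of $P$ at $\theta_0$ with invertible Jacobian produces, inside every small ball, a Miranda domain $D=g(\cube(\delta))$ with $g(\theta)=(\J_P(\theta_0))^{-1}\theta+\theta_0$ on whose opposite faces $P-P(\theta_0)$ changes sign strictly, and then (\Cref{lem:solution_given_miranda_domain}) that the perturbations coming from $P_r-P$ and from the empirical average preserve these sign conditions for large $r,N$. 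You instead normalize $\J_P(\theta_0)=\id$ and run Brouwer's fixed-point theorem on a closed ball for $\Phi_{r,N}(\theta)=\theta-P_r(\theta)+\frac1N\sum_i\p{Y}_{\theta_0}(\sample_i)\chi_{E_\xi}(\sample_i)$, with the linearization error, the Picard error, and the sampling error each contributing $\varepsilon/3$. Since Miranda's theorem and Brouwer's theorem are equivalent, the two routes have the same logical strength; yours is arguably shorter to write down, while the paper's Jacobian-adapted Miranda domains are what let it extract the explicit threshold~\eqref{eq:condition_on_r0} in terms of $\|(\J_P(\theta_0))^{-1}\|_\infty$ (your version would recover the same rate up to constants, with the factor $\|(\J_P(\theta_0))^{-1}\|$ entering through the normalization $J=\id$). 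Two small points to tidy: you should note that $\bar B_\delta(\theta_0)\subset\Theta_\xi$, which requires $\theta_0\in\Theta_\xi^\circ$ (the paper arranges this by the choice of $\xi$); and the explicit $N_0$ via the law of the iterated logarithm is not something the paper claims, nor is it needed for the statement.
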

In fact, we show in~\Cref{prop:P_diff_ae} that $P$ is locally Lipschitz, and thus differentiable almost everywhere. 
Our assumptions and proof of consistency are distinct from those of~\cite{papavasiliou_parameter_2011}. In particular,~\cite{papavasiliou_parameter_2011} assumes \emph{a priori} that a unique solution to~\eqref{eq:intro_esmm} exists, while we do not make this assumption. Furthermore,~\cite{papavasiliou_parameter_2011} requires invertibility and a uniform lower bound on the Jacobian of $P_r$ for all $r \in \N$ and $\theta \in \Theta$, while our result only requires invertibility of the Jacobian of $P$ at the true parameter value $\theta_0 \in \Theta$. We also note that linear signature SDEs can model path-independent SDEs with polynomial vector fields (see~\Cref{experiment: 1 dimensional}), so~\Cref{thm:intro_consistency} can be viewed as a generalization of~\cite[Theorem 3.6]{papavasiliou_parameter_2011} in this setting; see \Cref{remark: on the other paper} and \Cref{remark: comparison of polynomials}.\medskip

We demonstrate the efficacy of our algorithm on numerical simulations in~\Cref{sec:experiments}. We then conclude in~\Cref{sec:non-identifiability} by discussing the extent of identifiability of the parameters of the signature SDE from the observed trajectories. We also provide a table of notation in~\Cref{apx:notation}.

\subsection*{Acknowledgments}
 We thank Emilio Ferrucci for discussions on the signature SDE, and James Foster for suggestions on simulations. We also thank Anastasia Papavasiliou for answering several question about her work~\cite{papavasiliou_parameter_2011}.
Pardis Semnani was supported by a Vanier Canada Graduate Scholarship. Vincent Guan was supported by an NSERC Graduate Fellowship. Elina Robeva, Pardis Semnani, and Vincent Guan were supported by a Canada CIFAR AI Chair and an NSERC Discovery Grant (DGECR-2020-00338). Part of this research was performed while Pardis Semnani was visiting the Institute for Mathematical and Statistical Innovation (IMSI), which is supported by the National Science Foundation (Grant No. DMS-1929348). Darrick Lee was supported by the Hong Kong Innovation and Technology Commission (InnoHK Project CIMDA) during part of this work.

\section{Preliminaries} \label{sec:preliminaries}

In this section, we provide some of the required background and notation on path signatures and rough paths. For further details, we refer the reader to~\cite{lyons_differential_2007,friz_multidimensional_2010}. In general, we will be working with bounded $p$-variation paths.

\begin{definition}
    Let $V$ be a Banach space with norm $\|\cdot\|$ and $p \geq 1$. Let $X:[0,T] \to V$ be continuous, i.e. $X \in C([0,T], V)$.    
    For $0 \leq s < t \leq T$, we define the the \emph{$p$-variation of $X$ on $[s,t]$} by
    \begin{align}
        |X|_{p, [s,t]} \coloneqq \left( \sup_{\cD} \sum_{i = 1} ^ {r_\cD} \left\| X_{t_i} - X_{t_{i-1}} \right\|^p\right)^{\frac 1 p},
    \end{align}
    where the supremum is taken over all partitions $\cD = \{s=t_0 < t_1 < \cdots < t_{r_\cD} = t\}$ of $[s,t]$. 
    We define the \emph{$p$-variation norm of $X$} to be
    \begin{align}
        \|X\|_{p} \coloneqq |X|_{p, [0,T]} + \|X_0\|.
    \end{align}
    The space of \emph{bounded $p$-variation paths} is
    \begin{align}
        C^{p-\var}([0,T],V) \coloneqq \left\{ X \in C([0,T], V) \, : \, \|X\|_p < \infty\right\}.
    \end{align}
\end{definition}

\subsection{Path Signatures in the Young Regime}
We begin with background on path signatures for sufficiently regular paths. 
Throughout this article, we will primarily focus on finite-dimensional Hilbert spaces $V$. Suppose we have an orthonormal basis $(e_1, \ldots, e_n)$ of $V$. This induces an orthonormal basis (and thus an inner product) on $V^{\otimes k}$, where the basis elements are
\begin{align}\label{eq: orthonormal basis}
    e_I \coloneqq e_{i_1} \otimes \ldots \otimes e_{i_k}
\end{align}
for all multi-indices $I = (i_1, \ldots, i_k)$, where $i_j \in [n]$. The \emph{tensor algebra} $T(V)$ and its completion $T\ps{V} \label{pg:tensor_alg}$ are respectively defined as the direct sum and product of all such tensor powers
\begin{align}
    T(V) \coloneqq \bigoplus_{k=0}^\infty V^{\otimes k} \andd T\ps{V} \coloneqq \prod_{k=0}^\infty V^{\otimes k}.
\end{align}
Note that the individual Hilbert space structure of $V^{\otimes k}$ does not induce a Hilbert space structure on $T\ps{V} \label{pg:hilbert_alg}$, but we may restrict to finite norm elements to obtain a Hilbert space,
\begin{align}
    H\ps{V} \coloneqq \left\{ \bs = (\bs^k)_{k=0}^\infty \in T\ps{V} \, : \, \sum_{k \geq 0} \|\bs^k\| < \infty \right\}.
\end{align}
We will also need to work with truncations of the tensor algebra, which we denote by
\begin{align}
    T^{(\leq q)}(V) \coloneqq \bigoplus_{k=0}^q V^{\otimes k}. 
\end{align}
We can now define path signatures for paths in the Young regime, with bounded $p$-variation where $p < 2$. 

\begin{definition}\label{def:signature}
    Let $p \in [1,2)$ and $X \in C^{p-\var}([0,T], V) \label{pg:sig_smooth}$. The \emph{path signature} is a map
    \begin{align}
        S: C^{p-\var}([0,T], V) \to T\ps{V},
    \end{align}
    defined by
    \begin{align}
        S(X) \coloneqq \left(S^k(X)\right)_{k=0}^\infty = \left( \int_{\Delta^k_{0,T}} dX_{t_1} \otimes \ldots \otimes dX_{t_k} \right)_{k=0}^\infty,
    \end{align}
    where $\Delta^k_{s,t} \coloneqq \{(t_1,\ldots,t_k) : s \leq t_1 < \ldots < t_k \leq t\}$ for all $s\leq t$.  
    The integral is defined as a Young integral, which is well-defined for $X \in C^{p-\var}([0,T], V)$ with $p \in [1,2)$. The component $S^k(X) \in V^{\otimes k}$ is called the \emph{level $k$ component} of the signature. The level $0$ component is $S^0(X) = 1$.
\end{definition}

In view of the rough paths setting in the following section, we will also consider the collection of signatures of a path $X \in C^{p-\var}([0,T], V)$, restricted to all subintervals. In particular, we define the map $\p{X}: \Delta_T \to T\ps{V}$, where $\Delta_T \coloneqq \Delta^2_{0,T}$ and for all $(s,t)\in \Delta_T$,
\begin{align}
    \p{X}_{s,t} \coloneqq S(X|_{[s,t]}) \coloneqq \left( \int_{\Delta^k_{s,t}} dX_{t_1} \otimes \ldots \otimes dX_{t_k} \right)_{k=0}^\infty.
\end{align}
The signature preserves the underlying concatenation structure of paths and satisfies an algebraic property called \emph{Chen's identity},
\begin{align}
    \p{X}_{s,t} \otimes \p{X}_{t,u} = \p{X}_{s,u} \quad \text{for all} \quad 0 \leq s < t < u \leq T.
\end{align} 

Following standard notation for the signature, we denote the level $k$ component by $\p{X}^k$. Given a path $X = (X^1, \ldots, X^n) \in C^{p-\var}([0,T], V)$, the path signature with respect to the multi-index $I = (i_1, \ldots, i_k) \in [n]^k$ is denoted by
\begin{align}
    \p{X}^I_{s,t} \coloneqq \int_{\Delta^k_{s,t}} dX^{i_1}_{t_1} \cdots dX^{i_k}_{t_k}. 
\end{align}
The components of the path signature satisfy the \emph{shuffle product} defined as follows. The permutation group on $k$ elements is denoted by $\Sigma_k$. For $k,\ell \in \N$, the set of \emph{$(k,\ell)$-shuffles} is defined as
\begin{align}
    \Sh(k,\ell) \coloneqq \{ \sigma \in \Sigma_{k+\ell} \, : \, \sigma^{-1}(1) < \ldots < \sigma^{-1}(k), \, \sigma^{-1}(k+1) < \ldots < \sigma^{-1}(k+\ell)\}.
\end{align}
The \emph{shuffle} of two multi-indices $I = (i_1, \ldots, i_k)$ and $J = (i_{k+1}, \ldots, i_{k+\ell})$ is defined by the multi-set
\begin{align}
    I \shuffle J \coloneqq \{ (i_{\sigma(1)}, \ldots, i_{\sigma(k+\ell)}) \, : \, \sigma \in \Sh(k,\ell)\}. 
\end{align}
The path signature satisfies the following \emph{shuffle identity},
\begin{align}
    \p{X}^I \cdot \p{X}^J = \sum_{K \in I \shuffle J} \p{X}^K. 
\end{align}
\begin{example}
    For $I=(1,2)$ and $J=(3,4)$, we have
    \begin{align}
        I \shuffle J = \{(1,2,3,4), (1,3, 2,4), (1,3,4,2), (3,1,2,4), (3,1,4,2),  (3,4,1,2)  \}.
    \end{align}
    For $I= (1)$ and $J=(1)$, we have 
    \begin{align}
        I \shuffle J = \{(1,1),(1,1)\}.
    \end{align}
\end{example}

\subsection{Rough Paths}
As we are primarily interested in differential equations driven by Brownian motion trajectories, which are almost surely bounded $p$-variation paths only for $p > 2$, we consider path signatures beyond the Young regime, where we will use the theory of rough paths~\cite{Lyons1998}. While Young integration allows us to compute signatures (and more generally, integrals) of bounded $p$-variation paths with $p<2$, we must enrich lower regularity paths with additional data in order to properly define signatures and integrals. We begin with the notion of a control, which is used to measure the regularity of paths.

\begin{definition}
    A \emph{control} is a continuous non-negative function $\omega: \Delta_T \to \R_{\geq 0}$ such that 
    \begin{align}
        \omega(t,t) = 0 \andd \omega(s,t) + \omega(t,u) \leq \omega(s,u) \quad \text{for all $s \leq t \leq u$.}
    \end{align}
\end{definition}
Now, we turn to the definition of a rough path. 
\begin{definition}
    Let $p \geq 1$. A \emph{$p$-rough path} is a function $\p{X}: \Delta_T \to T^{(\leq \lfloor p \rfloor)}(V)$ such that for all $0\leq s \leq t\leq u \leq T$, $\p{X}_{s,t}^0=1$, Chen's identity holds, i.e.,  $\p{X}_{s,t} \otimes \p{X}_{t,u} = \p{X}_{s,u}$, and it satisfies the regularity conditions
    \begin{align}
    \label{eq: regularity conditions}
        \|\p{X}^k_{s,t}\| \leq \frac{\omega(s,t)^{k/p}}{\beta (k/p)!}\quad \text{for all }  1 \le k \le \lfloor p \rfloor
    \end{align}
    for some control $\omega$ and a constant $\beta$, which only depends on $p$.\footnote{The presence or absence of constant $\beta$ and factor $(k/p)!$ in~\eqref{eq: regularity conditions} does not affect the definition of the class of $p$-rough paths. However, we choose to include them in~\eqref{eq: regularity conditions} to be consistent with the notation in \cite{lyons_differential_2007}.}
    Note that $(k/p)! \coloneqq \Gamma(k/p + 1)$ equals the Gamma function. The $p$-variation of the rough path $\p{X}$ is said to be controlled by $\omega$ if \eqref{eq: regularity conditions} holds. 
    The space of $p$-rough paths is equipped with the \emph{$p$-variation metric}
    \begin{align}
        d_p(\p{X}, \p{Y}) \coloneqq \max_{1 \leq k \leq \lfloor p \rfloor} \sup_{\cD \subset [0,T]} \left( \sum_{i=1}^{r_\cD} \|\p{X}^k_{t_{i-1}, t_i} - \p{Y}^k_{t_{i-1}, t_i}\|^{p/k}\right)^{1/p}.
    \end{align}
\end{definition}

The following extension theorem shows that path signatures are well-defined for rough paths.

\begin{theorem}{\cite[Theorem 3.7]{lyons_differential_2007}} \label{thm:extension}
    For $p \geq 1$, let $\p{X}: \Delta_T \to T^{(\leq \lfloor p \rfloor)}(V)$ be a $p$-rough path whose $p$-variation is controlled by some control $\omega$. Then, there exists a unique extension $\widetilde{\p{X}}: \Delta_T \to T\ps{V}$ of $\p{X}$ such that $\widetilde{\p{X}}^k = \p{X}^k$ for all $k \leq \lfloor p \rfloor$,  Chen's identity holds for $\widetilde{\p{X}}$, i.e. $\widetilde{\p{X}}_{s,t} \otimes \widetilde{\p{X}}_{t,u} = \widetilde{\p{X}}_{s,u}$ for all $0\leq s \leq t\leq u\leq T$, and $\widetilde{\p{X}}$ satisfies the regularity conditions
    \begin{align}
        \|\widetilde{\p{X}}^k_{s,t}\| \leq \frac{\omega(s,t)^{k/p}}{\beta (k/p)!} \quad \text{for all } k\geq 1.
    \end{align}
\end{theorem}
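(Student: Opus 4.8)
The plan is to build $\widetilde{\p{X}}$ one tensor level at a time by induction on $n$, starting from the given data at levels $0,\dots,\lfloor p\rfloor$. Suppose $\widetilde{\p{X}}^j$ has been constructed for every $j \le n-1$ (with $n-1 \ge \lfloor p\rfloor$) so that Chen's identity holds on these levels and $\|\widetilde{\p{X}}^j_{s,t}\| \le \omega(s,t)^{j/p}/(\beta (j/p)!)$. To produce level $n$, for a partition $D = \{s = u_0 < \dots < u_m = t\}$ of $[s,t]$ set $Z^D_i \coloneqq 1 + \widetilde{\p{X}}^1_{u_{i-1},u_i} + \dots + \widetilde{\p{X}}^{n-1}_{u_{i-1},u_i} \in T^{(\leq n)}(V)$ (with zero level-$n$ component), and let $\widetilde{\p{X}}^{n,D}_{s,t}$ be the level-$n$ component of $Z^D_1 \otimes \dots \otimes Z^D_m$. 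I would then define $\widetilde{\p{X}}^n_{s,t} \coloneqq \lim_{|D| \to 0} \widetilde{\p{X}}^{n,D}_{s,t}$; equivalently one may invoke the sewing lemma for the two-parameter map whose defect in additivity at level $n$ is $\sum_{\ell=1}^{n-1}\widetilde{\p{X}}^\ell_{s,t}\otimes\widetilde{\p{X}}^{n-\ell}_{t,u}$.

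The first step is that this limit exists. Since levels $\le n-1$ obey Chen's identity, inserting a single point $v \in (u_{i-1},u_i)$ into $D$ changes $\widetilde{\p{X}}^{n,D}_{s,t}$ by exactly $\sum_{\ell=1}^{n-1}\widetilde{\p{X}}^\ell_{u_{i-1},v}\otimes\widetilde{\p{X}}^{n-\ell}_{v,u_i}$, and the inductive bounds with the superadditivity of $\omega$ give $\|\widetilde{\p{X}}^\ell_{u_{i-1},v}\otimes\widetilde{\p{X}}^{n-\ell}_{v,u_i}\| \le \beta^{-2}(\ell/p)!^{-1}((n-\ell)/p)!^{-1}\,\omega(u_{i-1},v)^{\ell/p}\omega(v,u_i)^{(n-\ell)/p}$. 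Summing over $\ell$ and over a refinement shows that $\|\widetilde{\p{X}}^{n,D'}_{s,t} - \widetilde{\p{X}}^{n,D}_{s,t}\|$ is bounded by a constant times $\sum_i \omega(u_{i-1},u_i)^{n/p} \le \omega(s,t)\max_i \omega(u_{i-1},u_i)^{n/p - 1}$, which vanishes as $|D| \to 0$ because $n/p > 1$ and $\omega$ is continuous with $\omega(t,t)=0$. Hence $(\widetilde{\p{X}}^{n,D}_{s,t})_D$ is a Cauchy net, converging to $\widetilde{\p{X}}^n_{s,t}$ with an \emph{a priori} (possibly non-sharp) bound $\|\widetilde{\p{X}}^n_{s,t}\| \le C_n\,\omega(s,t)^{n/p}$. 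Chen's identity for $\widetilde{\p{X}}^n$ then follows from compatibility of the partition products with concatenation: for $D_1$ on $[s,t]$ and $D_2$ on $[t,u]$, the level-$n$ component of $(\prod_{D_1}Z_i)\otimes(\prod_{D_2}Z_i)$ equals $\widetilde{\p{X}}^{n,D_1}_{s,t} + \widetilde{\p{X}}^{n,D_2}_{t,u} + \sum_{k=1}^{n-1}\widetilde{\p{X}}^k_{s,t}\otimes\widetilde{\p{X}}^{n-k}_{t,u}$, and passing to the limit yields $\widetilde{\p{X}}_{s,u} = \widetilde{\p{X}}_{s,t}\otimes\widetilde{\p{X}}_{t,u}$ at level $n$.

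The crux is to upgrade $C_n\,\omega^{n/p}$ to the sharp bound with a constant $\beta$ that does not degrade as $n\to\infty$; this is where the normalization in~\eqref{eq: regularity conditions} and the \emph{neo-classical inequality}, $\sum_{\ell=0}^{n} x^{\ell/p}y^{(n-\ell)/p}/\bigl((\ell/p)!\,((n-\ell)/p)!\bigr) \le p^2\,(x+y)^{n/p}/(n/p)!$ for $x,y\ge 0$, enter. Using continuity of $\omega$ I would pick $m$ with $\omega(s,m) = \tfrac12\omega(s,t)$ (so $\omega(m,t)\le\tfrac12\omega(s,t)$ by superadditivity); multiplicativity at level $n$ gives $\widetilde{\p{X}}^n_{s,t} = \widetilde{\p{X}}^n_{s,m} + \widetilde{\p{X}}^n_{m,t} + \sum_{k=1}^{n-1}\widetilde{\p{X}}^k_{s,m}\otimes\widetilde{\p{X}}^{n-k}_{m,t}$, and the inductive bounds plus the neo-classical inequality bound the last sum by $\tfrac{p^2}{\beta^2}\,\omega(s,t)^{n/p}/(n/p)!$. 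Iterating the $\omega$-halving dyadically for $J$ generations, one gets $\|\widetilde{\p{X}}^n_{s,t}\| \le \sum_i\|\widetilde{\p{X}}^n_{v_{i-1},v_i}\| + \tfrac{p^2}{\beta^2(n/p)!}\bigl(\sum_{\ell\ge 0}2^{-\ell(n/p - 1)}\bigr)\omega(s,t)^{n/p}$; the geometric series is bounded by a constant $c_p$ depending only on $p$ since $n/p - 1 \ge (\lfloor p\rfloor + 1)/p - 1 > 0$ uniformly in $n > \lfloor p\rfloor$, while the boundary sum tends to $0$ as $J\to\infty$ by the crude bound $C_n$. Choosing $\beta = \beta(p)$ large enough that $p^2 c_p \le \beta$, we obtain $\|\widetilde{\p{X}}^n_{s,t}\| \le \omega(s,t)^{n/p}/(\beta(n/p)!)$, closing the induction and producing $\widetilde{\p{X}}$ on all of $T\ps{V}$.

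It remains to prove uniqueness. If $\widetilde{\p{X}}$ and $\widehat{\p{X}}$ are two extensions, let $n$ be minimal with $\widetilde{\p{X}}^n \ne \widehat{\p{X}}^n$; then $n > \lfloor p\rfloor$. By Chen's identity and agreement on levels $< n$, the difference $\phi_{s,t}\coloneqq\widetilde{\p{X}}^n_{s,t} - \widehat{\p{X}}^n_{s,t}$ is additive, $\phi_{s,u} = \phi_{s,t} + \phi_{t,u}$, and $\|\phi_{s,t}\| \le 2\,\omega(s,t)^{n/p}/(\beta(n/p)!)$. For any partition of $[s,t]$ then $\|\phi_{s,t}\| \le \sum_i\|\phi_{t_{i-1},t_i}\| \le \tfrac{2}{\beta(n/p)!}\,\omega(s,t)\,\max_i\omega(t_{i-1},t_i)^{n/p - 1} \to 0$ as the mesh shrinks, so $\phi \equiv 0$, a contradiction; induction on $n$ gives $\widetilde{\p{X}} = \widehat{\p{X}}$. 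I expect the main obstacle to be the third paragraph — making the dyadic bootstrap and the neo-classical bookkeeping precise while keeping $\beta$ independent of $n$ — whereas existence of the partition-product limit, Chen's identity, and uniqueness are comparatively routine. The complete argument is in~\cite[Theorem 3.7]{lyons_differential_2007}.
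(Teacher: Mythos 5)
The paper states this as a cited result (\cite[Theorem 3.7]{lyons_differential_2007}) and gives no proof of its own, so there is nothing internal to compare against; your outline is the standard Lyons extension argument from that source — level-by-level construction via partition products, Chen's identity by compatibility with concatenation, the sharp factorial bound via the neo-classical inequality and binary subdivision in $\omega$-measure, and uniqueness from additivity of the difference at the first discrepant level — and it is essentially correct. The one step stated too loosely is the Cauchy-net estimate: for an arbitrary refinement $D'$ of $D$, naively summing the single-point insertion costs does not yield the bound $C\sum_i\omega(u_{i-1},u_i)^{n/p}$ (inserting many points into one interval accumulates costs), and one needs Young's trick of removing points one at a time in an order that keeps each removal cost of size $O\bigl((\omega(s,t)/r)^{n/p}\bigr)$, or equivalently the sewing lemma you mention in passing; with that repair the sketch matches the cited proof.
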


By uniqueness, we also denote the extension (to arbitrary levels) by $\p{X} \label{pg:sig_rough}$, and the \emph{signature} of a rough path is given by
\begin{align} \label{eq:sig_rough}
    S(\p{X})\coloneqq \p{X}_{0,T} \in T\ps{V}.
\end{align}
In this article, we will work with a class of rough paths called \emph{geometric rough paths}.

\begin{definition} \label{def:grp}
    A \emph{geometric $p$-rough path} is a $p$-rough path which is the limit of $1$-rough paths in the $p$-variation metric. The space of geometric $p$-rough paths in $V$ is denoted by $\GRP_p(V)$. 
\end{definition}

The extension of geometric rough paths still satisfies the shuffle identity.

\begin{cor}{\cite[Corollary 3.9]{cass_integration_2016}} 
    Let $\p{X}: \Delta_T \to T\ps{V}$ be the extension of a geometric $p$-rough path. Then, for multi-indices $I,J$, we have
    \begin{align}
        \p{X}^I \cdot \p{X}^J = \sum_{K \in I \shuffle J} \p{X}^K. 
    \end{align}
\end{cor}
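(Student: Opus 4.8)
The plan is to deduce the shuffle identity for the extension of a geometric $p$-rough path from the classical shuffle identity for bounded-variation paths, via a density argument. Fix multi-indices $I,J$ and a point $(s,t)\in\Delta_T$; it suffices to prove $\p{X}^I_{s,t}\cdot\p{X}^J_{s,t}=\sum_{K\in I\shuffle J}\p{X}^K_{s,t}$, and since every $K\in I\shuffle J$ has length $k\coloneqq|I|+|J|$, only the levels $\le k$ of the rough paths involved will matter.

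First I would recall that for a $1$-rough path $\p{Z}$ --- equivalently, the full signature of a continuous bounded-variation path --- every iterated integral is an honest Young integral, and the classical product formula for iterated integrals (iterated integration by parts, i.e.\ decomposition of the product of simplices into shuffled simplices) gives $\p{Z}^I_{s,t}\cdot\p{Z}^J_{s,t}=\sum_{K\in I\shuffle J}\p{Z}^K_{s,t}$ for all $(s,t)$ and all $I,J$. By \Cref{def:grp}, the geometric $p$-rough path that $\p{X}$ extends is a $d_p$-limit of $1$-rough paths $\p{Z}(m)$; extending each $\p{Z}(m)$ to its full signature, it remains to pass to the limit in the identity above. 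For this I need two ingredients: (i) that tensor multiplication $V^{\otimes|I|}\times V^{\otimes|J|}\to V^{\otimes k}$ is continuous, which is immediate in finite dimensions; and (ii) that $d_p(\p{Z}(m),\p{X})\to0$ forces level-wise pointwise convergence $\p{Z}(m)^j_{s,t}\to\p{X}^j_{s,t}$ on $\Delta_T$ for every $j\le k$.

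Ingredient (ii) is the substance of the proof and the main obstacle. For $j\le\lfloor p\rfloor$ it is essentially built into the $p$-variation metric: specializing the supremum defining $d_p$ to the one-interval partition of $[s,t]$ (and using that $d_p$ dominates $p$-variation on every subinterval) bounds $\|\p{Z}(m)^j_{s,t}-\p{X}^j_{s,t}\|$ by a power of $d_p(\p{Z}(m),\p{X})$. For $\lfloor p\rfloor<j\le k$, the component $\p{X}^j_{s,t}$ is produced by the iterated construction in the proof of \Cref{thm:extension}: it is the limit, as the mesh of partitions of $[s,t]$ shrinks, of sums of tensor products of lower-level increments, and this construction is locally uniformly continuous in the $p$-variation metric. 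Since the $p$-variations of the $\p{Z}(m)$ are eventually uniformly controlled (by the control bounding $\p{X}$ together with $d_p$-convergence), induction on $j$ yields $\p{Z}(m)^j_{s,t}\to\p{X}^j_{s,t}$, and then letting $m\to\infty$ in $\p{Z}(m)^I_{s,t}\cdot\p{Z}(m)^J_{s,t}=\sum_{K\in I\shuffle J}\p{Z}(m)^K_{s,t}$ gives the result. An equivalent way to organize the limiting step is to note that the group-like elements of each truncated tensor algebra $T^{(\le k)}(V)$ form a closed set cut out by the shuffle relations: the level-$\le\lfloor p\rfloor$ truncation of a geometric $p$-rough path is a $d_p$-limit of group-like elements, hence group-like, and its Lyons extension --- the unique multiplicative functional of the prescribed regularity --- inherits group-likeness in each $T^{(\le k)}(V)$; but verifying this again comes down to the continuity of the extension map with respect to $d_p$.
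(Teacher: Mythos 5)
Your argument is correct, and it is worth noting that the paper itself offers no proof of this corollary: it is stated purely as a citation of \cite[Corollary 3.9]{cass_integration_2016}. For geometric $p$-rough paths as defined in \Cref{def:grp} (i.e.\ $d_p$-limits of $1$-rough paths), your density argument is the natural route: the shuffle identity for bounded-variation signatures is classical, and the passage to the limit rests entirely on your ingredient (ii), which is precisely the continuity of the Lyons extension map in the $p$-variation metric --- a known result (\cite[Theorem 3.10]{lyons_differential_2007}) that the paper itself invokes elsewhere (in the proof of \Cref{prop: signatures of the lift}), so deferring to it is legitimate rather than a gap. Your sketch of why it holds (direct control of levels $j\le\lfloor p\rfloor$ by $d_p$, plus stability of the iterated extension construction under a uniform control for higher levels) is the right outline; the only point to be careful about is that a \emph{common} control for the approximating sequence must be manufactured from $d_p$-convergence, which is standard. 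The cited reference actually establishes the result by the alternative, purely algebraic route you mention in your closing remark: it works with \emph{weakly} geometric rough paths (group-like elements of the truncated algebra, not a priori limits of smooth lifts) and shows that the Lyons extension of a group-like multiplicative functional remains group-like at every level, so the shuffle relations are inherited without any approximation argument. Your approach buys a shorter proof under the stronger (geometric) hypothesis used in this paper; the algebraic approach buys the more general statement and avoids invoking continuity of the extension.
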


\begin{remark}\label{remark: moving between interval and Delta}
    Throughout this article, we will interchangeably view paths as functions $X: [0,T] \to V$ and as functions $X: \Delta_T \to V$ by $X_{s,t} \coloneqq X_t - X_s$. Similarly, we can also view rough paths $\p{X} : \Delta_T \to T(V)$ as functions $\p{X} : [0,T] \to T(V)$ by $\p{X}_{t} \coloneqq \p{X}_{0,t}$. 
\end{remark}
We end this section by defining a \emph{trivial rough path}, which we will use in later sections. 
\begin{definition}\label{def: trivial rough path}
 For a Banach space $U$, the \emph{trivial rough path} in $\GRP_p(U)$ is denoted by $\p{0}_{U}$, and defined to be  $(\p{0}_U)_{s,t} \coloneqq (1,0,\ldots,0) \in T^{(\leq \lfloor p \rfloor)}(U)$ for all $(s,t)\in \Delta_T$. Note that $(1,0,\ldots,0)$ is the multiplicative identity in $T^{(\leq \lfloor p \rfloor)}(U)$. With abuse of notation, when the choice of $U$ is clear from the context, we may denote $\p{0}_U$ by $\p{0}$.
\end{definition}

\subsection{Universal Limit Theorem}
The theory of rough paths allows us to study controlled differential equations driven by highly irregular signals. In particular, for Banach spaces $U,V$, given $\p{X} \in \GRP_p(\drive)$, $F: \sol \to L(\drive, \sol)$, and $\zeta \in \sol$,  we wish to make sense of rough differential equations of the form
\begin{align} \label{eq:standard_rde}   
    dY_t = F(Y_t) d\p{X}_t, \quad Y_0 = \zeta.
\end{align} 
In order to do so, we consider the notion of $\Lip(\gamma)$ functions in the sense of~\cite[Section VI]{singular_integrals}. We state the definition for such functions on Banach spaces $U$; for the more general definition for closed sets $F \subset U$, see~\cite[Section VI.2.3]{singular_integrals} and~\cite[Definition 1.21]{lyons_differential_2007}.

\begin{definition}{\cite[Definition 10.2]{friz_multidimensional_2010}}
    Let $\gamma > 0$, and $U,V$ be Banach spaces. A function $f: U \to V$ is $\Lip(\gamma)$ if it is $\gamma_0$-times continuously differentiable, and there exists $M \geq 0$ such that
    \begin{align}
        \sup_{u \in U}\|f^{(k)}(u)\| \leq M  \text{ for all }  k = 0, \ldots, \gamma_0 \andd \sup_{u_1, u_2 \in U} \frac{\|f^{(\gamma_0)}(u_1) - f^{(\gamma_0)}(u_2)\|}{\|u_1 - u_2\|^{\gamma-\gamma_0}} \leq M,
    \end{align}
    where $f^{(k)}$ denotes the $k$th derivative of $f$, and $\gamma_0$ denotes the largest integer that is \emph{strictly} smaller than $\gamma$. The smallest such $M \geq 0$ is the $\Lip(\gamma)$ norm of $f$, denoted $\|f\|_{\Lip(\gamma)}$. 
\end{definition}

We record a basic lemma which states that the product of two $\Lip(\gamma)$ functions is still $\Lip(\gamma)$.
\begin{lemma} \label{lem:product_lipgamma}
    Suppose $U,V_1,V_2,V_3$ are Banach spaces, $U$ is finite-dimensional, and $f: U\to \LL(V_1,V_2)$ and $g: U \to \LL(V_2,V_3)$ are $\Lip(\gamma)$ functions. Then, the product $h  
    : U \to \LL(V_1,V_3)$ defined by
    \begin{align}
        h(u) \coloneqq g(u) \circ f(u) \ \text{ for all } u\in U,
    \end{align}
    is a $\Lip(\gamma)$ function. Furthermore, $\|h\|_{\Lip(\gamma)} \leq K \|f\|_{\Lip(\gamma)} \|g\|_{\Lip(\gamma)}$, where $K>0$ is a constant that only depends on $\gamma$ and the dimension of $U$.
\end{lemma}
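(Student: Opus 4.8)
The plan is to verify the $\Lip(\gamma)$ conditions directly from the definition, controlling each derivative $h^{(k)}$ for $k=0,\ldots,\gamma_0$ and the Hölder seminorm of $h^{(\gamma_0)}$ via the Leibniz rule. First I would write out the higher-order product rule: since $h(u) = g(u)\circ f(u)$ is a composition-with-multiplication of the Banach-space-valued maps $f$ and $g$ (multilinearly, using that composition $\LL(V_1,V_2)\times\LL(V_2,V_3)\to\LL(V_1,V_3)$ is a bounded bilinear map with norm $\leq 1$), the $k$th Fréchet derivative $h^{(k)}(u)$ is a sum over $j=0,\ldots,k$ of terms built from $g^{(j)}(u)$ and $f^{(k-j)}(u)$, symmetrized appropriately over the $k$ input directions; there are at most $2^k$ such terms (or $\binom{k}{j}$ in each degree), each of operator norm bounded by $\|g^{(j)}(u)\|\,\|f^{(k-j)}(u)\|$. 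Here finite-dimensionality of $U$ is what guarantees these Fréchet derivatives exist as genuine multilinear maps with the expected combinatorics and lets us pass freely between directional and full derivatives; it also ensures the combinatorial constant depends only on $\dim U$ and $\gamma_0$ (hence on $\gamma$).

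Next, for $k=0,\ldots,\gamma_0$ I would bound $\|h^{(k)}(u)\| \leq \sum_{j=0}^k \binom{k}{j}\|g^{(j)}(u)\|\,\|f^{(k-j)}(u)\| \leq \left(\sum_{j=0}^k \binom{k}{j}\right)\|f\|_{\Lip(\gamma)}\|g\|_{\Lip(\gamma)} = 2^k \|f\|_{\Lip(\gamma)}\|g\|_{\Lip(\gamma)}$, using that each $\|f^{(i)}(u)\|, \|g^{(i)}(u)\| \leq \|f\|_{\Lip(\gamma)}, \|g\|_{\Lip(\gamma)}$ for $i \leq \gamma_0$. Then for the top-order Hölder bound, I would estimate $\|h^{(\gamma_0)}(u_1) - h^{(\gamma_0)}(u_2)\|$ by inserting telescoping terms: in each summand, replace $g^{(j)}(u_1)\otimes f^{(\gamma_0-j)}(u_1)$ by $g^{(j)}(u_2)\otimes f^{(\gamma_0-j)}(u_2)$ one factor at a time, so the difference is controlled by $\|g^{(j)}(u_1)-g^{(j)}(u_2)\|\,\|f^{(\gamma_0-j)}(u_1)\| + \|g^{(j)}(u_2)\|\,\|f^{(\gamma_0-j)}(u_1)-f^{(\gamma_0-j)}(u_2)\|$. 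For the intermediate derivatives $j<\gamma_0$ (so $\gamma_0-j \geq 1$, and likewise on the other side) I would use the mean value inequality together with the uniform bound on $f^{(j+1)}$ resp. $g^{(j+1)}$ to get a Lipschitz-in-$u$ estimate $\|f^{(i)}(u_1)-f^{(i)}(u_2)\| \leq \|f\|_{\Lip(\gamma)}\|u_1-u_2\|$ for $i < \gamma_0$, and then bound $\|u_1-u_2\| \leq (\mathrm{diam}\text{-free})$ — here one has to be slightly careful since $U$ is unbounded: instead I would split into the cases $\|u_1-u_2\|\leq 1$ and $\|u_1-u_2\|>1$, using the Hölder estimate directly in the latter case (where $\|u_1-u_2\|^{\gamma-\gamma_0} \geq 1$ and all derivatives are uniformly bounded) and the Lipschitz/Hölder estimates in the former (where $\|u_1-u_2\|^{1} \leq \|u_1-u_2\|^{\gamma-\gamma_0}$ since $\gamma - \gamma_0 \leq 1$). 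The one genuinely top-order term, $j=\gamma_0$ paired with $f^{(0)}$ and the symmetric term $g^{(0)}$ with $f^{(\gamma_0)}$, is handled directly by the $\Lip(\gamma)$ Hölder seminorms of $g^{(\gamma_0)}$ and $f^{(\gamma_0)}$. Collecting all terms gives $\|h\|_{\Lip(\gamma)} \leq K\|f\|_{\Lip(\gamma)}\|g\|_{\Lip(\gamma)}$ with $K$ depending only on $\gamma_0$ (through the binomial sums and number of telescoping steps) and $\dim U$.

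The main obstacle is bookkeeping rather than conceptual: organizing the Leibniz expansion of $h^{(\gamma_0)}$ and the telescoping differences so that every term is visibly bounded by a product of an $f$-quantity and a $g$-quantity, each of which is either $\leq \|\cdot\|_{\Lip(\gamma)}$ or $\leq \|\cdot\|_{\Lip(\gamma)}\|u_1-u_2\|^{\gamma-\gamma_0}$, and in particular making sure the case split on $\|u_1-u_2\| \lessgtr 1$ is applied uniformly so that no term carries an uncontrolled power of $\|u_1-u_2\|$. A secondary subtlety is justifying that the relevant derivatives of $h$ exist and have the claimed multilinear form — this is where finite-dimensionality of $U$ is invoked, so that differentiability of the bilinear composition map composed with the $C^{\gamma_0}$ maps $f,g$ yields a $C^{\gamma_0}$ map $h$ with derivatives given by the classical Leibniz formula; I would state this as a routine consequence of the chain and product rules for Fréchet derivatives and not belabor it.
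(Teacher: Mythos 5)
Your proposal is correct and follows essentially the same route as the paper: expand $h^{(k)}$ by the Leibniz rule, bound each derivative by $\sim 2^k\|f\|_{\Lip(\gamma)}\|g\|_{\Lip(\gamma)}$, and handle the top-order Hölder seminorm by telescoping and reducing to Hölder control of $\|f^{(i)}(u_1)-f^{(i)}(u_2)\|$ for $i<\gamma_0$. The only (cosmetic) difference is in that last step: where you split on $\|u_1-u_2\|\lessgtr 1$, the paper interpolates, writing the difference as $\|f^{(i)}(u_1)-f^{(i)}(u_2)\|^{1-(\gamma-\gamma_0)}\cdot\|f^{(i)}(u_1)-f^{(i)}(u_2)\|^{\gamma-\gamma_0}$ and bounding the two factors by boundedness and the mean-value Lipschitz estimate respectively — both devices yield the required $C\|u_1-u_2\|^{\gamma-\gamma_0}$ bound with a constant depending only on $\gamma$.
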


\begin{proof}
    The proof is given in \Cref{appendix: proofs}.
\end{proof}

A key property of rough paths is that it allows us to consider integrals of $\Lip(\gamma)$ 1-forms, and we refer the reader to~\cite[Definition 4.9]{lyons_differential_2007} for details on the construction. 

\begin{theorem}{\cite[Theorem 4.12]{lyons_differential_2007}} \label{thm:rough_integral}
    Let $p \geq 1$. Suppose $f: V \to \LL(V, U)$ is a $\Lip(\gamma-1)$ function for $\gamma > p$. Then, there exists a continuous integration map $\cI_f: \GRP_p(V) \to \GRP_p(U)$ defined by
    \begin{align}
        \cI_f(\p{Z}) = \int f(\p{Z}) d\p{Z}.
    \end{align}
    If $\omega: \Delta_T \to \R_{\geq 0}$ is a control, there exists a constant $K > 0$ dependent on $\|f\|_{\Lip(\gamma)}$, $p$, $\gamma$, and $\omega(0,T)$ such that for all $\p{Z} \in \GRP_p(V)$ with $p$-variation controlled by $\omega$, we have
    \begin{align}
        \|\p{Y}^k_{s,t}\| \leq K\omega(s,t)^{k/p} \quad \text{for all } (s,t)\in \Delta_T \text{ and } k =0, \ldots, \lfloor p \rfloor,
    \end{align}
    where $\p{Y} = \cI_f(\p{Z})$. 
\end{theorem}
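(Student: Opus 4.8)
The plan is to build $\cI_f(\p{Z})$ by the standard rough-paths machinery: exhibit an explicit local approximation to the integral, show it is an \emph{almost rough path} in the sense of Lyons, and then invoke the almost-rough-path-to-rough-path correspondence (equivalently, the sewing lemma applied level by level) to extract the genuine rough path $\p{Y}$, reading off the bounds and the continuity from the estimates that go into the construction. Concretely, for $(s,t) \in \Delta_T$ I would take the level-one approximation to be
\begin{align}
    (\widehat{\p{Y}}_{s,t})^1 \coloneqq \sum_{j=0}^{\lfloor p \rfloor - 1} \frac{1}{j!}\, D^j f(Z_s)\big[\p{Z}^{j+1}_{s,t}\big],
\end{align}
where $D^j f(Z_s) \in \LL(V^{\otimes j}, \LL(V,U))$ is paired against $\p{Z}^{j+1}_{s,t} \in V^{\otimes(j+1)}$, and take the higher components $(\widehat{\p{Y}}_{s,t})^k$, for $2 \le k \le \lfloor p \rfloor$, to be the corresponding iterated-integral polynomials in the jets $D^{\bullet}f(Z_s)$ and in $\p{Z}^1_{s,t}, \dots, \p{Z}^{\lfloor p \rfloor}_{s,t}$. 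Since $\gamma > p$, the one-form $f$ is $\Lip(\gamma-1)$ with $\gamma - 1 > \lfloor p \rfloor - 1$, so all derivatives $D^j f$ with $j \le \lfloor p \rfloor - 1$ exist, are bounded by $\|f\|_{\Lip(\gamma)}$, and $D^{\lfloor p \rfloor - 1} f$ has a Hölder modulus controlled by $\|f\|_{\Lip(\gamma)}$.

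Next I would establish two estimates for $\widehat{\p{Y}}$. The first is the size bound: using the regularity hypothesis $\|\p{Z}^k_{s,t}\| \le \omega(s,t)^{k/p}/(\beta\,(k/p)!)$ together with the uniform bounds on the jets of $f$, the explicit polynomial form of $(\widehat{\p{Y}}_{s,t})^k$ immediately gives $\|(\widehat{\p{Y}}_{s,t})^k\| \le C\,\omega(s,t)^{k/p}$ with $C$ depending only on $\|f\|_{\Lip(\gamma)}$, $p$, $\gamma$, and $\omega(0,T)$. The second, and this is the analytic heart, is almost-multiplicativity: there is an exponent $\theta > 1$ (of the form $\gamma/p$ in the generic range $p < \gamma \le \lfloor p \rfloor + 1$) such that
\begin{align}
    \big\|(\widehat{\p{Y}}_{s,u})^k - (\widehat{\p{Y}}_{s,t} \otimes \widehat{\p{Y}}_{t,u})^k\big\| \le C\,\omega(s,u)^{\theta} \quad \text{for all } s \le t \le u \text{ and } 1 \le k \le \lfloor p \rfloor.
\end{align}
This follows by Taylor-expanding each jet $D^j f(Z_t)$ around $Z_s$ to order $\gamma - 1 - j$, substituting into the level-$k$ identities, and using Chen's identity $\p{Z}_{s,t} \otimes \p{Z}_{t,u} = \p{Z}_{s,u}$ to cancel all terms except a remainder controlled by the Hölder modulus of $D^{\lfloor p\rfloor-1}f$ times products of $\p{Z}$-increments, which is of order $\omega(s,u)^{\theta}$.

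With these two estimates in hand, Lyons' almost-rough-path-to-rough-path theorem (or the sewing lemma applied inductively over levels) produces a unique $p$-rough path $\p{Y}$ with $\|\p{Y}^k_{s,t} - (\widehat{\p{Y}}_{s,t})^k\| \le C'\,\omega(s,t)^\theta$ for $k \le \lfloor p \rfloor$; combined with the size bound on $\widehat{\p{Y}}$ and $\theta > 1 \ge k/p$ (absorbing $\omega(s,t)^{\theta - k/p} \le \omega(0,T)^{\theta - k/p}$ into the constant), this yields $\|\p{Y}^k_{s,t}\| \le K\,\omega(s,t)^{k/p}$ with $K = K(\|f\|_{\Lip(\gamma)}, p, \gamma, \omega(0,T))$, the asserted bound. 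We then set $\cI_f(\p{Z}) \coloneqq \p{Y}$; it lies in $\GRP_p(U)$ because $\p{Z}$ is geometric, either by pushing the defining smooth approximations of $\p{Z}$ through the (continuous) construction, or by checking that the explicit polynomial formulas for $\widehat{\p{Y}}$ respect the shuffle relations. Continuity of $\cI_f$ in the $p$-variation metric follows by running the same argument \emph{in tandem} for two driving signals $\p{Z}, \p{Z}'$ controlled by a common $\omega$: using boundedness and local Lipschitzness of the jets of $f$ one obtains $\|(\widehat{\p{Y}}_{s,t})^k - (\widehat{\p{Y}}'_{s,t})^k\| \lesssim d_p(\p{Z},\p{Z}')\,\omega(s,t)^{k/p}$ (the level-$k$ estimate also picking up lower-level differences), and a Lipschitz version of the almost-rough-path correspondence transfers this bound to $d_p(\cI_f(\p{Z}), \cI_f(\p{Z}'))$.

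I expect the main obstacle to be the almost-multiplicativity estimate: correctly organizing the Taylor expansions of the jets $D^j f(Z_t)$ about $Z_s$, matching them against the iterated-integral structure of $\p{Z}$ through Chen's identity so that the leading terms cancel, and verifying that the surviving remainder really is of order $\omega(s,u)^\theta$ with $\theta$ strictly above $1$ — all while tracking the constants so they depend only on $\|f\|_{\Lip(\gamma)}$, $p$, $\gamma$, and $\omega(0,T)$. The bookkeeping is heaviest at the higher levels $k \ge 2$, where the multiplicativity defect of $\widehat{\p{Y}}$ involves products of the lower-level approximants together with their own defects.
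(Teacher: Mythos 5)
This theorem is quoted from \cite[Theorem 4.12]{lyons_differential_2007} and the paper gives no proof of its own; your proposal correctly reconstructs the standard argument from that reference (define the almost rough path from the Taylor jets of $f$ paired with the levels of $\p{Z}$, verify the size and almost-multiplicativity estimates with exponent $\gamma/p>1$, and pass to the associated rough path, obtaining geometricity by approximation and continuity by running the estimates in tandem). Apart from a harmless normalization convention in the $1/j!$ factors, this is essentially the same proof.
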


Now, we can use this definition of an integral to define the solution of the rough differential equation in~\eqref{eq:standard_rde}. We will denote the projection maps $\pi_\drive$ and $\pi_\sol$ from $\drive \oplus \sol$ to $\drive$ and $\sol \label{pg:projection}$ respectively, and use the same symbol for their induced maps on (truncated) tensor algebras,
\begin{align}
    \pi_\drive: T\ps{\drive \oplus \sol} \to T\ps{\drive} \andd \pi_\sol: T\ps{\drive \oplus \sol} \to T\ps{\sol}.
\end{align}

\begin{definition} \label{def:rde_sol}
    Let $1 \leq p < \gamma$, $F: \sol \to \LL(\drive, \sol)$ be a $\Lip(\gamma-1)$ function, and $\zeta\in \sol$. Define the vector field $h_F: \drive \oplus \sol \to \End(\drive \oplus \sol)$ by
    \begin{align}
        h_F(u,v) \coloneqq \pmat{\Id_\drive & 0 \\F(v+\zeta) & 0}.
    \end{align}
    We call $\p{Z} \in \GRP_p(\drive \oplus \sol)$ a \emph{coupled solution} to the rough differential equation in~\eqref{eq:standard_rde} if
    \begin{align}
        \p{Z} = \int h_F(\p{Z}) d\p{Z} \andd \pi_\drive(\p{Z}) = \p{X},
    \end{align}
    where the integral is understood in terms of~\Cref{thm:rough_integral}. In this case, we call $\p{Y} = \pi_\sol(\p{Z}) \in \GRP_p(\sol)$ the \emph{solution} to the rough differential equation in~\eqref{eq:standard_rde}.\footnote{In~\cite{lyons_differential_2007}, the rough path $\p{Z}$ is called the \emph{solution}. Here, we call $\p{Y}$ the solution in order to differentiate between various notions in the following sections.}
\end{definition}

This definition couples together the driving rough path $\p{X} = \pi_\drive(\p{Z})$ with the solution rough path $\p{Y} = \pi_\sol(\p{Z})$. In order to obtain solutions to such rough differential equations, we turn to the familiar concept of Picard iterations (though in a generalized form). We define $\p{Z}(0) \coloneqq (\p{X}, \p{0}_\sol) \in \GRP_p(\drive \oplus \sol)$. We define the \emph{Picard iterations with respect to $h_F$} recursively as
\begin{align} \label{eq:original_picard}
    \p{Z}(r+1) \coloneqq \int h_F(\p{Z}(r)) d\p{Z}(r).
\end{align}
The following \emph{Universal Limit Theorem} shows the existence and uniqueness of solutions to rough differential equations. 

\begin{theorem}{\cite[Theorem 5.3]{lyons_differential_2007}} \label{thm:universal_limit}
    Let $1 \leq p < \gamma$, $F: \sol \to \LL(\drive, \sol)$ be a $\Lip(\gamma)$ function, and $\zeta\in \sol$. For $\p{X} \in \GRP_p(\drive)$, the following hold:
    \begin{enumerate}
        \item The rough differential equation in~\eqref{eq:standard_rde} admits a unique coupled solution $\p{Z} = (\p{X}, \p{Y}) \in \GRP_p(\drive \oplus \sol)$.
        \item The map $I_{f}: \GRP_p(\drive) \times V \to \GRP_p(\sol)$ which sends $(\p{X}, \zeta)$ to $\p{Y} = \pi_{\sol}(\p{Z})$ is continuous in the $p$-variation topology.
        \item Let $\p{Z}(r)$ be the sequence of Picard iterations defined in~\eqref{eq:original_picard}, and define $\p{Y}(r) \coloneqq \pi_{\sol}(\p{Z}(r))$. The solution is given as the limit $\p{Y} = \lim_{r \to \infty} \p{Y}(r)$.
        \item Let $\omega$ be a control for the $p$-variation of $\p{X}$. For all $\rho > 1$, there exists some $T_\rho \in (0, T]$ such that
        \begin{align}
            \|\p{Y}(r)^k_{s,t} - \p{Y}(r+1)^k_{s,t}\| \leq 2^k \rho^{-r} \frac{\omega(s,t)^{k/p}}{\beta \left(\frac{k}{p}\right)!}
        \end{align}
        for all $(s,t)\in \Delta_{T_{\rho}}$ and $k = 0, \ldots \lfloor p \rfloor$. The parameter $T_\rho$ depends only on $\|f\|_{\Lip(\gamma)}$, $p$, $\gamma$, and~$\omega$. 
    \end{enumerate}
\end{theorem}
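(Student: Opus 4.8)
The plan is to recast \Cref{thm:universal_limit} as a fixed-point problem for rough paths over the coupled state space $\drive \oplus \sol$, solve it on a short time interval by a Banach fixed-point argument built on the rough integration map of \Cref{thm:rough_integral}, and then patch the local solutions into a global one via Chen's identity.

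First I would set up the fixed point. By \Cref{def:rde_sol}, a coupled solution is exactly a $\p{Z} \in \GRP_p(\drive \oplus \sol)$ fixed by $\Phi(\p{Z}) \coloneqq \cI_{h_F}(\p{Z}) = \int h_F(\p{Z})\,d\p{Z}$ subject to $\pi_\drive(\p{Z}) = \p{X}$. Since $F$ is $\Lip(\gamma)$ and $h_F$ is built from $F$ by precomposing with a translation and padding with constant blocks, $h_F$ is again $\Lip(\gamma)$, so \Cref{thm:rough_integral} provides the continuous integration map $\cI_{h_F}$. I would then note that the Picard iterates of~\eqref{eq:original_picard} are precisely the orbit $\p{Z}(r+1) = \Phi(\p{Z}(r))$ started at $\p{Z}(0) = (\p{X}, \p{0}_\sol)$, and that the block-triangular form of $h_F$ forces $\pi_\drive \Phi(\p{Z}) = \p{X}$, so the constraint is automatically preserved along the iteration.

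Next I would localize. Let $\omega$ control the $p$-variation of $\p{X}$ and fix $\rho > 1$. The estimate in \Cref{thm:rough_integral} has a constant depending only on $\|h_F\|_{\Lip(\gamma)}$, $p$, $\gamma$ and $\omega(0,T)$; combining this with the continuity of $\omega$ and $\omega(t,t)=0$, I would choose $T_\rho \in (0,T]$ small enough that there is a control $\widetilde\omega \geq \omega$ on $[0,T_\rho]$ for which the set $\mathcal{B} \coloneqq \{\p{Z} \in \GRP_p(\drive \oplus \sol) : \pi_\drive(\p{Z}) = \p{X},\ \text{$p$-variation of $\p{Z}$ controlled by }\widetilde\omega\text{ on }[0,T_\rho]\}$ is invariant under $\Phi$ and contains $\p{Z}(0)$; $(\mathcal{B}, d_p)$ is complete as a closed subset of the space of $p$-rough paths controlled by $\widetilde\omega$. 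The heart of the argument is then a Lipschitz estimate $d_p(\Phi(\p{Z}), \Phi(\p{Z}')) \leq C(T_\rho)\, d_p(\p{Z}, \p{Z}')$ on $\mathcal{B}$, proved by differencing the iterated-integral construction behind \Cref{thm:rough_integral} and controlling how a perturbation of the integrand (through the $\Lip(\gamma)$ dependence on $h_F$, by a mean-value argument of the kind in \Cref{lem:product_lipgamma}) and of the integrator propagate level by level; shrinking $T_\rho$ further forces $C(T_\rho) = \rho^{-1} < 1$. The Banach fixed-point theorem then gives a unique $\p{Z} \in \mathcal{B}$ with $\Phi(\p{Z}) = \p{Z}$, hence a unique coupled solution on $[0,T_\rho]$ with geometrically convergent Picard orbit, yielding parts (1) and (3) locally with $\p{Y} = \pi_\sol(\p{Z})$. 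For the sharp rate in part (4) I would rerun the contraction estimate in its \emph{localized}, level-by-level form and prove by induction on $r$ that $\|\p{Z}(r)^k_{s,t} - \p{Z}(r+1)^k_{s,t}\| \leq 2^k \rho^{-r}\, \omega(s,t)^{k/p}/(\beta\,(k/p)!)$ for $(s,t) \in \Delta_{T_\rho}$ and $k \leq \lfloor p \rfloor$, where the factor $2^k$ absorbs the per-level growth of the integration constant and the factorial $(k/p)!$ is exactly what makes the induction close; projecting onto $\sol$ gives the stated inequality.

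Finally I would globalize and prove continuity. Because $T_\rho$ depends only on $\|F\|_{\Lip(\gamma)}$, $p$, $\gamma$ and $\omega$, and not on $\zeta$ or on the current position of $\p{Y}$, the local solution can be restarted: solve on $[0, s_1]$, then restart at $s_1$ with initial value $Y_{s_1}$ and driving signal $\p{X}$ restricted to $[s_1,\cdot]$, and so on. Since any continuous control on $[0,T]$ admits a finite partition $0 = s_0 < \dots < s_m = T$ on each piece of which its mass is below a fixed threshold determined by $\|F\|_{\Lip(\gamma)}$, $p$, $\gamma$, finitely many restarts reach $T$; Chen's identity glues the pieces into a single $\p{Z} \in \GRP_p(\drive \oplus \sol)$, and local uniqueness on each piece gives global uniqueness, proving part (1). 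Part (2) follows from the uniform contraction principle applied on each subinterval — the contraction constant and the invariant set $\mathcal{B}$ can be chosen locally uniformly in $(\p{X}, \zeta)$, so the fixed point depends continuously in $d_p$ on these data — together with stability of continuity under the finite concatenation. I expect the main obstacle to be the Lipschitz/contraction estimate, both in the $d_p$ form and, more delicately, in the level-by-level form needed for part (4): this is where the full machinery behind the rough integral and the precise $\beta$, $(k/p)!$ bookkeeping genuinely enter, whereas the rest is localization, a fixed-point argument, and patching.
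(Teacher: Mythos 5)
This statement is \cite[Theorem 5.3]{lyons_differential_2007}, which the paper quotes without proof, so there is no internal argument to compare against; your sketch has to be measured against the source. It does follow the strategy of Lyons' proof: pass to the coupled system on $\drive \oplus \sol$, iterate the rough integration map, localize in time until successive Picard differences decay geometrically level by level (this is exactly part (4), with the $\beta$ and $(k/p)!$ bookkeeping closed by the neoclassical inequality), and then patch subintervals via Chen's identity using the fact that $T_\rho$ depends only on the global $\Lip(\gamma)$ norm of $F$ and not on the initial condition. Two caveats. First, the genuinely hard content --- the level-by-level Lipschitz estimate for $\p{Z} \mapsto \int h_F(\p{Z})\,d\p{Z}$ jointly in the integrand and the integrator --- is named but not carried out; the source devotes most of its effort to precisely this, and it is not a routine mean-value argument, so what you have is a roadmap rather than a proof. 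Second, and more substantively, the Banach fixed-point framing only yields uniqueness \emph{within} your invariant set $\mathcal{B}$, whereas \Cref{def:rde_sol} imposes no a priori control on a coupled solution; to conclude part (1) you must additionally show that any $\p{Z}$ satisfying $\p{Z} = \int h_F(\p{Z})\,d\p{Z}$ with $\pi_\drive(\p{Z}) = \p{X}$ automatically has $p$-variation controlled by $\widetilde\omega$ on $[0,T_\rho]$ (for instance via the a priori bound of \Cref{thm:rough_integral} applied to the solution itself), or else run a direct two-solution comparison estimate as in the source.
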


\subsection{Universal and Characteristic Properties}

While the signature of rough paths characterizes the paths up to \emph{tree-like equivalence}~\cite{boedihardjo_signature_2016}, we wish to use the signature to characterize paths without this equivalence relation. Furthermore, while rough differential equations are formulated as in~\eqref{eq:standard_rde}, we wish to study SDEs of the form~\eqref{eq:pdsde_compact}, which have both drift and diffusion terms. In order to deal with both of these issues, we will consider (rough) paths equipped with \emph{time parametrization}. We define \emph{time parametrized rough paths} to be
\begin{align}
    \GRP_p^{\tp}(\R \times V) \coloneqq \{ \overline{\p{X}} \in \GRP_p(\R \times V) \, : \, \overline{\p{X}}^1_t = (t, X_t) \}.
\end{align}

\begin{remark} \label{rem:time_parametrization}
For a rough path $\p{X} \in \GRP_p(V)$, there is a canonical way to construct a rough path $\overline{\p{X}} \in \GRP_p(\R \times V)$ such that $\overline{\p{X}}^{1}_t = (t, \p{X}^1_t)$ and $ \pi_V(\overline{\p{X}}) = \p{X}$; see~\cite[Section 9.4]{friz_multidimensional_2010}. In particular,
\begin{align}
    \iota^{\tp} : \GRP_p(V) \hookrightarrow \GRP_p^{\tp}(\R \times V), \quad \iota^{\tp}(\p{X}) = \overline{\p{X}}
\end{align}
is continuous~\cite[Theorem 9.33]{friz_multidimensional_2010}.
\end{remark}

As stated in the introduction, the signature allows us to approximate functions and characterize measures on the path space~\cite{chevyrev_characteristic_2016, chevyrev_signature_2022, cuchiero2023global}. The result from~\cite{chevyrev_signature_2022} performs a normalization on the signature such that the normalized signature is a bounded continuous map. Here, we adapt this result and remove the normalization procedure by restricting our attention to a bounded subset of the path space, which is sufficient for the purposes of this article.

\begin{theorem} \label{thm:univ_char}
    Let $\xi > 0$, and $B_\xi \coloneqq \{ \overline{\p{X}} \in \GRP_p^{\tp}(V) \, : \, d_p(\overline{\p{X}}, \bzero) < \xi\}$. 
    \begin{enumerate}
        \item The signature $S: B_{\xi} \to H\ps{\R \times V}$, defined in~\eqref{eq:sig_rough}, is a bounded continuous map. 
        \item \textbf{(Universal)} The space of linear functionals $\langle \ell, S(\cdot) \rangle : B_{\xi} \to \R$ of the signature, where $\ell \in T(\R \times V)$, is dense in continuous bounded functions $C_b(B_\xi, \R)$ equipped with the strict topology\footnote{For a topological space $A$, a function $\psi: A \to \R$ \emph{vanishes at infinity} if for all $\varepsilon > 0$, there exists a compact $K \subset A$ such that $\sup_{x \in A \setminus K} |\psi(x)| < \varepsilon$. The \emph{strict topology} on $C_b(A,\R)$ is the topology generated by the family of seminorms $p_\psi(f) = \sup_{x \in A} |f(x) \psi(x)|$ for all functions $\psi$ that vanish at infinity.} \cite{giles_generalization_1971}.
        \item \textbf{(Characteristic)} Let $\cB(B_\xi)$ denote finite regular Borel measures on $B_\xi$. The expected signature
        \begin{align}
            \E[S] : \cB(B_\xi) \to H\ps{\R \times V}, \quad \mu \mapsto \E_{\overline{\p{X}} \sim \mu} [S(\overline{\p{X}})]
        \end{align}
        is injective. 
    \end{enumerate}
\end{theorem}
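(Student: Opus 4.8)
The plan is to reduce \Cref{thm:univ_char} to the corresponding normalized statement in~\cite{chevyrev_signature_2022} by exploiting the fact that on the bounded set $B_\xi$ no normalization is needed. Concretely, \cite{chevyrev_signature_2022} constructs a normalization map $\Lambda$ on $T\ps{\R\times V}$ (a tensor rescaling that does not alter the linear span of coordinate functionals and is injective on signatures of paths) and proves that $\overline{\p{X}} \mapsto \Lambda(S(\overline{\p{X}}))$ is bounded and continuous into a Hilbert space, that linear functionals of $\Lambda \circ S$ are strictly dense in $C_b$, and that the normalized expected signature is injective (this last via the characteristic property of the tensor normalization and the moment problem on the tensor algebra). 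The point of \Cref{thm:univ_char} is that on $B_\xi$ these normalized statements upgrade to un-normalized ones, because $S$ itself is already bounded there.

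First I would prove part (1). By definition, every $\overline{\p{X}} \in B_\xi$ is a time-parametrized geometric $p$-rough path with $d_p(\overline{\p{X}}, \bzero) < \xi$, so its $p$-variation is controlled by a single control $\omega$ with $\omega(0,T)$ bounded by a constant depending only on $\xi$, $p$, and $T$. Applying the extension theorem~\Cref{thm:extension} (which itself gives the factorial decay $\|\p{X}^k_{0,T}\| \leq \omega(0,T)^{k/p}/(\beta (k/p)!)$ for all $k$), the tail $\sum_{k\geq K}\|\p{X}^k_{0,T}\|$ is dominated by the tail of a convergent series uniformly over $B_\xi$. This gives $S(B_\xi) \subset H\ps{\R\times V}$ with a uniform norm bound, proving boundedness; continuity of $S$ in the $p$-variation metric on this bounded set follows from the continuity of the extension map and the local Lipschitz estimates in \cite{lyons_differential_2007} (or directly from continuity of $\cI$ in \Cref{thm:rough_integral} applied iteratively).

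Next I would prove parts (2) and (3) by a comparison argument. Since $S(B_\xi)$ lives in a bounded subset of $H\ps{\R\times V}$ on which the normalization $\Lambda$ is a continuous injection with continuous inverse onto its image, the linear functional $\langle \ell, S(\cdot)\rangle$ and the normalized functionals $\langle \ell', \Lambda(S(\cdot))\rangle$ generate the same algebra of functions on $B_\xi$ up to the change of variables induced by $\Lambda$; since $\Lambda$ acts by a fixed invertible rescaling on each tensor level, linear functionals of $S$ restricted to $B_\xi$ are dense in the same space that linear functionals of $\Lambda\circ S$ are, namely $C_b(B_\xi,\R)$ in the strict topology, which gives (2). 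For (3), suppose $\mu, \nu \in \cB(B_\xi)$ have $\E_\mu[S] = \E_\nu[S]$. Pairing with an arbitrary $\ell \in T(\R\times V)$ and using the shuffle identity to identify $\langle \ell, S(\overline{\p{X}})\rangle$ with a linear functional, we get $\int \langle \ell, S\rangle \, d\mu = \int \langle \ell, S\rangle \, d\nu$ for all $\ell$; since these functionals are strictly dense in $C_b(B_\xi,\R)$ by part (2) and finite regular Borel measures on $B_\xi$ are determined by their pairing against $C_b$ with the strict topology \cite{giles_generalization_1971}, we conclude $\mu = \nu$. (Alternatively, invoke injectivity of the normalized expected signature from \cite{chevyrev_signature_2022} directly, together with injectivity of $\Lambda$.)

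The main obstacle is part (3): one must verify that matching the un-normalized expected signature of two measures on $B_\xi$ forces the measures to coincide, and the cleanest route is the density-plus-Riesz argument above, which requires knowing that $C_b(B_\xi,\R)$ with the strict topology is rich enough to separate measures in $\cB(B_\xi)$. This is exactly the content of the generalized Stone--Weierstrass / strict topology duality of \cite{giles_generalization_1971}, so the real work is organizing the measure-theoretic bookkeeping: confirming $B_\xi$ is a suitable (e.g. completely regular, or Polish in the $p$-variation metric restricted to the bounded set) topological space, that $S$ maps into $C_b$, and that linear signature functionals form a point-separating subalgebra closed under the relevant operations (which follows from the shuffle identity — the product of two linear functionals of $S$ is again a linear functional of $S$). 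The boundedness in part (1) is what makes all of this go through without the normalization $\Lambda$, so I would take care to state the uniform control $\omega(0,T) \lesssim_{\xi,p,T} 1$ explicitly at the outset.
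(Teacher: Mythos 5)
Your proof of part (1) is exactly the paper's argument: a uniform bound on the control, $\omega(0,T)\lesssim_{\xi,p} 1$, followed by the factorial decay from \Cref{thm:extension} to get a uniform bound on $\|S(\overline{\p{X}})\|$ in $H\ps{\R\times V}$, with continuity inherited from the extension map. For parts (2) and (3) the paper simply states that the remainder is identical to \cite[Theorem 21]{chevyrev_signature_2022}, and what you spell out --- the shuffle identity making the linear signature functionals a point-separating subalgebra of $C_b(B_\xi,\R)$, the strict-topology Stone--Weierstrass theorem of \cite{giles_generalization_1971} for density, and the duality between $(C_b,\text{strict})$ and finite regular Borel measures for injectivity of the expected signature --- is precisely the content of that cited proof, so your write-up is consistent with the paper's.

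One caution on your \emph{primary} route to part (2): the normalization $\Lambda$ in \cite{chevyrev_signature_2022} is not a fixed invertible rescaling on each tensor level; it is a path-dependent dilation $\mathbf{x}\mapsto\delta_{\lambda(\mathbf{x})}\mathbf{x}$ where the factor $\lambda(\mathbf{x})$ depends on $\mathbf{x}$ itself. Consequently a linear functional of $\Lambda\circ S$ is generically \emph{not} a linear functional of $S$, and the claim that the two families have the same linear span on $B_\xi$ does not hold as stated. This does not sink your proof, because the fallback you give in the final paragraph (apply the Giles Stone--Weierstrass theorem directly to the un-normalized functionals, which is legitimate once part (1) shows they lie in $C_b(B_\xi,\R)$) is exactly what the paper intends by removing the normalization; but you should make that direct argument the main line rather than the $\Lambda$-comparison.
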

\begin{proof}
    Because we consider bounded rough paths in $B_\xi$, each $\overline{\p{X}}$ has a control function $\omega$ such that $\omega(0, T) < C\xi^p$ for some constant $C>0$. Then, by~\Cref{thm:extension}, the signature satisfies the bound
    \begin{align}
        \|S(\overline{\p{X}})\| \leq \sum_{k=0}^\infty \frac{C^{k/p}\xi^k}{\beta (k/p)!} < \infty,
    \end{align}
    which does not depend on $\p{X}$. Thus $S: B_{\xi} \to H\ps{\R \times V}$ is a bounded continuous map, and linear functionals are also bounded continuous functions $\langle \ell, S(\cdot) \rangle \in C_b(B_\xi, \R)$. The remainder of the proof is identical to~\cite[Theorem 21]{chevyrev_signature_2022}.
\end{proof}

\begin{remark} \label{rem:univ_char}
    We note that more general approximation results can be found in~\cite{cuchiero2023global}, which provide universality and characteristicness results on the entire path space. This is done by considering weighted topologies, which replaces the above boundedness conditions by sufficient decay conditions on functions and measures. 
\end{remark}

\section{Path-Dependent Differential Equations} \label{sec:general_existence_uniqueness}
This section consists of the first step towards understanding the path-dependent SDEs in~\eqref{eq:pdsde_compact}, by considering a (deterministic) path-dependent rough differential equation. 
Throughout this article, assume $\drive$ and $\sol$\label{pg:U,V} are Banach spaces whose tensor powers are endowed with norms which satisfy the usual requirements of symmetry and consistency~\cite[Definition 1.25]{lyons_differential_2007}. Moreover, assume $\sol$ is finite-dimensional.
We fix $p\geq 1$, $\gamma > p$, and the natural number $q\geq \lfloor p \rfloor$ throughout\label{pg:p,gamma,q}, and assume that all rough paths are defined on $\Delta_T$ for some $T > 0$.  \medskip

We study path-dependent rough differential equations (RDEs) of the form 
\begin{align} \label{eq:original_rde}
    dY_t = F(\p{Y}_t)  d\p{X}_t,
\end{align}
where $\p{X} \in \GRP_p(\drive)$ is the driving signal as a geometric $p$-rough path, $Y: [0,T] \to \sol$ is the solution with signature denoted by $\p{Y}$, and $F$ is a $\Lip(\gamma-1)$ vector field. 
Throughout this article, we consider vector fields $F$ which depend only on the \emph{truncated} signature of $Y$ up to level $q \label{pg:ssol}$, and thus denote
\begin{align}
    \ssol \coloneqq T^{(\leq q)}(V) 
    \andd F: \ssol \to \LL(\drive, \sol).
\end{align}
In order to formally define solutions in this path-dependent context using the rough path theory discussed in the previous section, we reformulate this RDE as an ordinary path-independent RDE of the (truncated) signature of $Y$,
\begin{align} \label{eq:reformulated_rde}
    d\p{Y}_t = \p{F}(\p{Y}_t) d\p{X}_t \qquad \p{Y}_0 = \bone,
\end{align}
where we call $\p{F}:  \ssol \to \LL(\drive, \ssol)$ the \emph{lifted vector field of $F$}, and $\bone \coloneqq (1, 0, \ldots, 0) \in \ssol \label{eq:multiplicative identity}$ is the multiplicative identity in $\ssol$. \medskip

To begin, let's consider the case of a bounded $1$-variation driving signal, $X$. Our aim is to define the lifting of $F$ to $\p{F}$. In particular, we express $\p{F} = (\p{F}^0, \p{F}^1, \ldots, \p{F}^q)$, where $\p{F}^k : \ssol \to \LL(U, \sol^{\otimes k})$, and note that $\p{F}^0 = 0$ and $\p{F}^1 = F$. Then for $k=2,\ldots,q$, the level $k$ path signature of $Y$ satisfies
\begin{align}
    d\p{Y}^k_t &= \p{Y}^{k-1}_t \otimes d\p{Y}^1_t = \p{Y}^{k-1}_t \otimes F(\p{Y}_t) dX_t, \label{eq:expanded_pd}
\end{align}
and therefore suggests the definition
\begin{align} \label{eq:pF_def}
    \p{F}^k(\p{s})(x) = \p{s}^{k-1} \otimes F(\p{s})(x) \in \sol^{\otimes k},
\end{align}
where $\p{s} = (\p{s}^0, \ldots, \p{s}^q) \in \ssol$ and $x \in \drive$. The aim is to define solutions of~\eqref{eq:original_rde} as solutions of~\eqref{eq:reformulated_rde} using~\Cref{def:rde_sol}. However, an immediate issue arises: while $F$ may be a $\Lip(\gamma-1)$ function, $\p{F}$ is not $\Lip(\gamma-1)$ in general for any $\gamma > 1$; even if $F$ has bounded derivatives in $\p{s}$,  the vector field $\p{F}$ might still have unbounded derivatives. \medskip

In this section, we will consider a modification of the lifted vector field $\p{F}$ in order to maintain the $\Lip(\gamma-1)$ condition. We can then apply the Universal Limit Theorem from~\Cref{thm:universal_limit} to obtain an existence and uniqueness result for such path-dependent RDEs. 

\subsection{Reformulation with Modified Vector Field}

The main idea in reformulating the vector field is to restrict $\p{F}$ to a closed subset on which it is $\Lip(\gamma-1)$, and use the following extension theorem to extend it back to the whole space as a $\Lip(\gamma-1)$ function.

\begin{theorem}{\cite[Section VI.2, Theorem 4]{singular_integrals}} \label{thm:modify_lipgamma}
    Let $\alpha>0$. Let $U, V$ be Banach spaces, where $V$ is finite-dimensional, and $K \subset V$ be a closed subset. Let $f:K \to U$ be a $\Lip(\alpha)$ function. Then there exists an extension $\ext{f}: V \to U$ of $f$ such that $\ext{f}$ is a $\Lip(\alpha)$ function on $V$. Furthermore, there is a constant $C\in \mathbb{R}$, independent of the choice of $K$, such that for all $\Lip(\alpha)$ functions $f:K\to U$, we have $\|\ext{f}\|_{\Lip(\alpha)} \leq C \|f\|_{\Lip(\alpha)}$.
\end{theorem}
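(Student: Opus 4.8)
The plan is to prove this as a form of the Whitney extension theorem adapted to Banach-space targets; the argument is classical and follows~\cite[Section VI.2]{singular_integrals}, so I only describe the scheme. Write $\alpha_0$ for the largest integer strictly smaller than $\alpha$. Since $K$ is closed, a $\Lip(\alpha)$ function $f$ on $K$ is a Whitney jet: a tuple $(f^{(j)})_{j=0}^{\alpha_0}$ with $f^{(0)} = f$ and $f^{(j)} : K \to \LL(V^{\otimes j}, U)$ such that, for $0 \le j \le \alpha_0$ and all $x,y \in K$, the Taylor remainders $R_j(x,y) \coloneqq f^{(j)}(y) - \sum_{l=0}^{\alpha_0 - j} \tfrac{1}{l!} f^{(j+l)}(x)(y-x)^{\otimes l}$ obey $\|R_j(x,y)\| \le \|f\|_{\Lip(\alpha)}\, \|x-y\|^{\alpha - j}$. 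The goal is to build $\ext{f} \in C^{\alpha_0}(V,U)$ whose derivatives of order $j \le \alpha_0$ restrict on $K$ to $f^{(j)}$, whose $\alpha_0$-th derivative is $(\alpha-\alpha_0)$-Hölder, and with $\|\ext{f}\|_{\Lip(\alpha)} \le C(\dim V, \alpha)\,\|f\|_{\Lip(\alpha)}$.

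The construction uses the geometry of the complement. As $V$ is finite-dimensional, identify it with $\R^n$ ($n = \dim V$) and decompose the open set $V \setminus K$ into a Whitney family of dyadic cubes $\{Q_i\}$ with pairwise disjoint interiors and $\mathrm{diam}(Q_i) \le \mathrm{dist}(Q_i, K) \le c_n\, \mathrm{diam}(Q_i)$. Subordinate to fixed mild dilations $Q_i^*$ of these cubes, take a smooth partition of unity $\{\varphi_i\}$ with $\sum_i \varphi_i \equiv 1$ on $V \setminus K$, $0 \le \varphi_i \le 1$, $\mathrm{supp}\,\varphi_i \subset Q_i^*$, bounded overlap (each point lies in at most $N_n$ of the $Q_i^*$), and $\|\partial^\beta \varphi_i\| \le c_{n,\beta}\, \mathrm{diam}(Q_i)^{-|\beta|}$. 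For each $i$ pick a point $p_i \in K$ nearest to $Q_i$ (it exists since $K$ is closed and $V$ locally compact), and let $P_i(x) \coloneqq \sum_{j=0}^{\alpha_0} \tfrac{1}{j!} f^{(j)}(p_i)(x - p_i)^{\otimes j}$ be the associated degree-$\alpha_0$ Taylor polynomial. Define $\ext{f} \coloneqq f$ on $K$ and $\ext{f}(x) \coloneqq \sum_i \varphi_i(x) P_i(x)$ on $V \setminus K$.

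On $V \setminus K$ the map $\ext{f}$ is $C^\infty$, being locally a finite sum of smooth terms, so for $|\beta| \le \alpha_0$ one expands $\partial^\beta \ext{f}$ by the Leibniz rule. The decisive point is that $\sum_i \partial^{\beta'} \varphi_i \equiv 0$ whenever $|\beta'| \ge 1$: fixing, for a given $x \in V \setminus K$, a cube $Q_{i_0} \ni x$, one can subtract the fixed polynomial $P_{i_0}$ and rewrite every resulting term as $\partial^{\beta'}\varphi_i(x) \cdot \partial^{\beta''}(P_i - P_{i_0})(x)$ with $\beta' + \beta'' = \beta$. The first factor contributes a power $\mathrm{dist}(x,K)^{-|\beta'|}$, while the Whitney remainder bounds give $\|\partial^{\beta''}(P_i - P_{i_0})(x)\| \lesssim \|f\|_{\Lip(\alpha)}\,\mathrm{dist}(x,K)^{\alpha - |\beta''|}$ for $x \in Q_i^*$, using that $\mathrm{diam}(Q_i)$, $\|x - p_i\|$, $\|p_i - p_{i_0}\|$ are all comparable to $\mathrm{dist}(x,K)$; the net exponent is $\alpha - |\beta| > 0$ since $\alpha > \alpha_0 \ge |\beta|$. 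This yields a uniform bound on $\partial^\beta \ext{f}$ on $V \setminus K$ for $|\beta| \le \alpha_0$, and — via a standard case analysis (points at comparable scale, points far apart, one or both points near $K$) — both the continuous extension of $\partial^\beta \ext{f}$ across $\partial K$ to $f^{(\beta)}$, hence the $C^{\alpha_0}$ regularity and the jet-matching on $K$, and the global $(\alpha-\alpha_0)$-Hölder bound on the top derivative. Every constant entering the Whitney decomposition, the overlap number, and the partition-of-unity derivative bounds depends only on $n = \dim V$ (and $\alpha$), never on $K$, so assembling the pieces gives $\|\ext{f}\|_{\Lip(\alpha)} \le C(\dim V, \alpha)\,\|f\|_{\Lip(\alpha)}$.

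The \textbf{main obstacle} is the last paragraph's boundary analysis: one must carefully balance the blow-up $\mathrm{diam}(Q_i)^{-|\beta'|}$ of the partition-of-unity derivatives against the decay of the Taylor-polynomial differences, and — in the several regimes of how two points sit relative to $K$ and to each other — verify that the candidate derivatives glue continuously to the prescribed jet on $K$ with exactly the $(\alpha-\alpha_0)$-Hölder modulus. The Banach-space target $U$ introduces no extra difficulty, since all differentiation happens in the finite-dimensional domain $V$ and only norm estimates on $U$-valued quantities are used.
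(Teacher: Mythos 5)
The paper does not prove this statement itself — it imports it directly from Stein (\cite[Section VI.2, Theorem 4]{singular_integrals}) — and your proposal is a faithful and correct sketch of exactly that source's argument: Whitney decomposition of $V\setminus K$, a subordinate partition of unity with the standard derivative bounds, Taylor polynomials anchored at nearest points of $K$, and the cancellation $\sum_i\partial^{\beta'}\varphi_i\equiv 0$ to control derivatives near $\partial K$. Your observations that all constants depend only on $\dim V$ and $\alpha$ (never on $K$) and that the Banach-space target $U$ is harmless because only norm estimates on $U$-valued quantities are used are precisely the points needed to justify the statement as formulated in the paper.
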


We will apply this result by first factoring the vector field $\p{F}: \ssol \to \LL(\drive, \ssol)$ as defined in~\eqref{eq:pF_def} into two components. We define the map $\tens : \ssol \to \LL(\sol, \ssol)$ to be the truncated tensor product in $\ssol$; in particular, for $\p{s} = (\p{s}^0, \ldots, \p{s}^{q})\in \ssol$ and $v \in \sol \label{pg:tens}$, we have 
\begin{align}
    \tens(\p{s})(v) \coloneqq (1, \p{s}^1, \ldots, \p{s}^{q-1}) \otimes v \in \ssol.
\end{align}
Then, we can express the lifted vector field $\p{F}$ as 
\begin{align}
    \p{F} (\p{s})\coloneqq \tens(\p{s}) \circ F(\p{s}).
\end{align}
for all $\p{s}\in \ssol$.  Note that $\tens$ is an unbounded function, but its restriction to a bounded subset is a $\Lip(\alpha)$ function for any $\alpha>0$ (i.e.~a bounded Lipschitz function). Thus, we use~\Cref{thm:modify_lipgamma} to extend this restriction to a $\Lip(\alpha)$ function on $\ssol$.

\begin{definition} \label{def:modified_tensor_mult}
    Let $M>0$. 
    Set $K_M \coloneqq \{\p{s} \in \ssol \, : \, \|\p{s}\| \leq M \} \subset \ssol$.
    We define $\tens_{M} : \ssol \to \LL(\sol, \ssol)$ to be the extension from~\Cref{thm:modify_lipgamma} of the restriction $\tens|_{K_M} : K_M \to \LL(\sol, \ssol)$ to $K_M \subset \ssol$. The map $\tens_M$ is a $\Lip(\alpha)$ function for any $\alpha>0$.
\end{definition}
Now, by applying~\Cref{lem:product_lipgamma}, we obtain a $\Lip(\gamma-1)$ modification of $\p{F}$.

\begin{cor} \label{cor:general_lifted_vf}
    Let $F: \ssol \to \LL(\drive,\sol)$ be a $\Lip(\alpha)$ function for some $\alpha>0$. For $M>0$, we define $\p{F}_M : \ssol \to \LL(\drive, \ssol)$ to be
    \begin{align} \label{eq:pFM_vector_field}
        \p{F}_M (\p{s}) \coloneqq \tens_M(\p{s}) \circ F(\p{s})
    \end{align}
    for all $\p{s}\in \ssol$. Then the vector field $\p{F}_M$ is $\Lip(\alpha)$. 
\end{cor}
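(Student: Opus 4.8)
The plan is to reduce the statement to a direct application of \Cref{lem:product_lipgamma}. The vector field $\p{F}_M$ is defined as the composition $\p{F}_M(\p{s}) = \tens_M(\p{s}) \circ F(\p{s})$, so to invoke \Cref{lem:product_lipgamma} I must check its three hypotheses are met with the identifications $V_1 = \drive$, $V_2 = \sol$, $V_3 = \ssol$, and the parameter domain $U = \ssol$ (with $\gamma$ in the lemma replaced by $\alpha$). First I would note that $\ssol = T^{(\leq q)}(\sol)$ is finite-dimensional: it is a finite direct sum $\bigoplus_{k=0}^q \sol^{\otimes k}$ of tensor powers of the finite-dimensional space $\sol$, so the finite-dimensionality requirement on the parameter space $U$ in \Cref{lem:product_lipgamma} holds. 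Next, $F: \ssol \to \LL(\drive, \sol)$ is $\Lip(\alpha)$ by assumption, which gives one of the two factor maps. Finally, $\tens_M : \ssol \to \LL(\sol, \ssol)$ is $\Lip(\alpha)$ by \Cref{def:modified_tensor_mult} (it was constructed precisely as the \Cref{thm:modify_lipgamma}-extension of the bounded Lipschitz map $\tens|_{K_M}$), which gives the other factor.

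With all three hypotheses verified, I would apply \Cref{lem:product_lipgamma} (with $f = F$, $g = \tens_M$, and the composition $h = g \circ f$) to conclude directly that $\p{F}_M(\p{s}) = \tens_M(\p{s}) \circ F(\p{s})$ defines a $\Lip(\alpha)$ function $\ssol \to \LL(\drive, \ssol)$, which is exactly the assertion of the corollary. If a quantitative bound is wanted, the same lemma yields $\|\p{F}_M\|_{\Lip(\alpha)} \leq K \|F\|_{\Lip(\alpha)} \|\tens_M\|_{\Lip(\alpha)}$ for a constant $K$ depending only on $\alpha$ and $\dim \ssol$, though the statement as written only asks for the $\Lip(\alpha)$ property.

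There is essentially no serious obstacle here — the work was already done in setting up \Cref{def:modified_tensor_mult} (via \Cref{thm:modify_lipgamma}) and in \Cref{lem:product_lipgamma}. The only points requiring a moment's care are bookkeeping: confirming that $\ssol$ is finite-dimensional so that the lemma applies, and matching the source/target Banach spaces of $F$ and $\tens_M$ so that the composition $\tens_M(\p{s}) \circ F(\p{s})$ is well-typed (the output of $F(\p{s})$ lands in $\LL(\drive, \sol)$ and $\tens_M(\p{s})$ is a map $\sol \to \ssol$, so the composite is a map $\drive \to \ssol$, i.e.\ an element of $\LL(\drive, \ssol)$, as required). Everything else is immediate.
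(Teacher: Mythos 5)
Your proposal is correct and is exactly the argument the paper intends: the corollary follows immediately from \Cref{lem:product_lipgamma} applied with $f = F$ and $g = \tens_M$ (the latter being $\Lip(\alpha)$ by construction in \Cref{def:modified_tensor_mult}), using that $\ssol$ is finite-dimensional. Your bookkeeping of the Banach spaces and the resulting norm bound match the paper's reasoning precisely.
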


We can now define one notion of a solution to the path-dependent RDE in~\eqref{eq:original_rde} in terms of the modified vector field. 

\begin{definition} \label{def:M_solution}
    Let $M > 0$, and consider $\p{F}_M: \ssol \to \LL(\drive, \ssol)$ from~\eqref{eq:pFM_vector_field}.  Define the vector field $h_M: \drive \oplus \ssol \to \End(\drive \oplus \ssol)$ by
    \begin{align} \label{eq:h_m_def}
        h_M(u,\p{s}) \coloneqq \pmat{\Id_\drive & 0 \\\p{F}_M(\p{s}+\bone) & 0}.
    \end{align}
    for all $u \in \drive$ and $\p{s}\in \ssol$. We call $\pp{Z} \in \GRP_p(\drive \oplus \ssol)$ a \emph{coupled $M$-solution} to the differential equation in~\eqref{eq:original_rde} if
    \begin{align}
        \pp{Z} = \int h_M(\pp{Z}) d\pp{Z} \andd \quad \pi_U(\pp{Z}) = \p{X}.
    \end{align}
    In this case, we call $\pp{Y} \coloneqq \pi_{\ssol}(\pp{Z}) \in \GRP_p(\ssol)$ a \emph{lifted $M$-solution}.
    Moreover, we define the \emph{$M$-solution} to be $\p{Y} \coloneqq \pp{Y}^1 + \bone$, where $\pp{Y}^1$ the level 1 component of $\pp{Y}$. Note that $\p{Y}_{0,\cdot} : [0,T] \to \ssol$ is a bounded $p$-variation path, i.e. $\p{Y}_{0,\cdot} \in  C^{p-\var}([0,T], \ssol)$. 
\end{definition}

We will further discuss the interpretation of these solutions in the following section, but we first note that the Universal Limit Theorem,~\Cref{thm:universal_limit}, can be directly applied to this setting if $F:\ssol \to \LL(\drive, \sol)$ is a $\Lip(\gamma)$ function. Therefore, we can define the function $I_{F,M}$ as follows. 
\begin{definition} \label{def:I_FM}
    Let $F: \ssol \to \LL(\drive,\sol)$ be a $\Lip(\gamma)$ function. For the path-dependent differential equation \eqref{eq:original_rde}, define $I_{F,M}:\GRP(U) \to \GRP(\ssol)$ to be the continuous function which takes the driving signal $\p{X}\in \GRP(U)$ to the corresponding lifted $M$-solution $\pp{Y}\in \GRP(\ssol)$. Note that by part (2) of \Cref{thm:universal_limit}, $I_{F,M}$ is well-defined.
\end{definition}

\begin{definition}[Picard iterations]
    Let $M>0$ and $\p{X} \in \GRP_p(\drive)$. Define $\pp{Z}(0) \coloneqq (\p{X}, \mathbf{0}_{\ssol}) \in \GRP_p(\drive \oplus \ssol)$ (see \Cref{def: trivial rough path}). For all $r\in \mathbb{N}$, define 
    \begin{align} \label{eq:pd_picard}
        \pp{Z}(r) \coloneqq \int h_M(\pp{Z}(r-1)) d\pp{Z}(r-1)
    \end{align}
    using the rough integral in~\Cref{thm:rough_integral}. The sequence $\{\pp{Z}(r)\}_{r\in \mathbb{Z}_{\geq 0}}$ is called the sequence of \textit{$M$-Picard iterations} of \eqref{eq:original_rde}.
\end{definition}

We note that the definition of the $M$-Picard iterations is exactly the definition used in the Universal Limit Theorem,~\Cref{thm:universal_limit}. Thus, the Universal Limit Theorem can be directly applied to show that unique lifted $M$-solutions exist as the limit of the projections of the $M$-Picard iterations on $\ssol$.

\subsection{The Solution as a Geometric Rough Path}
Using the Universal Limit Theorem in~\Cref{thm:universal_limit} provides a lifted $M$-solution to~\eqref{eq:original_rde} as a geometric $p$-rough path $\pp{Y} \in \GRP_p(\ssol)$. In this section, we discuss when the underlying $M$-solution is a geometric $p$-rough path in $\GRP_p(\sol)$. 
Before we continue, we will briefly discuss some notational conventions.

\begin{notation}
    We denote signatures of signatures or rough paths valued in $T(\ssol)$ using calligraphic symbols $\pp{Y}$. Integer superscripts will continue to denote the \emph{outer level} of the signature, for instance $\pp{Y}^k \in \ssol^{\otimes k}$. Given a basis of $V \cong \R^n$, we obtain a basis of $\ssol$ indexed by multi-indices $\{I = (i_1, \ldots, i_k)\}_{k=0}^q$ with $i_j \in [n]$, with
    \begin{align}
        e_I \coloneqq e_{i_1} \otimes \ldots \otimes e_{i_k}. 
    \end{align}
    We will denote the \emph{outer tensor product} of $T(\ssol)$ by $\boxtimes$, and we obtain a basis of $T(\ssol)$ by multi-indices $\{\cI = (I_1, \ldots, I_k)\}^\infty_{k=0}$ where each $I_j$ is a multi-index valued in $[n]$, defined by
    \begin{align}
        e_\cI \coloneqq e_{I_1} \boxtimes \ldots \boxtimes e_{I_k}. 
    \end{align}
    Superscripts using such multi-indices $\cI$ will denote the component $\pp{Y}^{\cI}$ of $\pp{Y}$. We denote the empty multi-index corresponding to $k=0$ with $\emptyset$.
\end{notation}

In order to show that $\p{Y}$ is a geometric $p$-rough path, we begin by relating various components of $\pp{Y}$ and $\p{Y}$. 

\begin{proposition}\label{prop: signatures of the lift}
    Assume $F: \ssol \to \LL(\drive,\sol)$ is a $\Lip(\gamma)$ function, $M>0$, and $\p{X}\in \GRP_p(\drive)$ such that $\pp{Y} = I_{F,M}(\p{X})$ is the lifted $M$-solution and $\p{Y}$ is the underlying $M$-solution to \eqref{eq:original_rde}.
    If $\|\p{Y}_{0,t}\| <M$ for all $t\in [0,T]$, then for any $(s,t)\in \Delta_T$, $\p{Y}_{s,t}^{\emptyset} = 1$, and for any $i_1,\ldots,i_k \in [\dim \sol]$ with $1\leq k\leq q$, 
    \begin{align} \label{eq: signatures of the lift}
    \pp{Y}_{s,t}^{((i_1),\ldots,(i_k))} = \pp{Y}_{s,t}^{((i_1,\ldots,i_k))} = \p{Y}_{s,t}^{(i_1,\ldots,i_k)}.
    \end{align}
    In particular, $\p{Y} = \pi_V(\pp{Y})$. 
\end{proposition}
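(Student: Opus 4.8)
The plan is to exploit the bound $\|\p{Y}_{0,t}\| < M$ to show that along the solution the modified lifted vector field $\p{F}_M = \tens_M \circ F$ acts exactly like the honest lifted field $\p{F} = \tens \circ F$, so that the cascade \eqref{eq:expanded_pd} holds along $\p{Y}_{0,\cdot}$; the three-fold identity \eqref{eq: signatures of the lift} then merely records that a path satisfying \eqref{eq:expanded_pd} with initial value $\bone$ is precisely the truncated signature of its own level-one component.

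\textbf{Step 1: the modification is invisible along the solution.} The range $R \coloneqq \{\p{Y}_{0,t} : t \in [0,T]\} \subset \ssol$ is compact and, by hypothesis, contained in the open ball $\{\p{s} : \|\p{s}\| < M\}$, hence in a neighbourhood $N$ of $R$ on which $\tens_M$ coincides with the polynomial map $\tens$ (\Cref{def:modified_tensor_mult}); thus on $\drive \oplus N$ the vector field $h_M$ of \eqref{eq:h_m_def} coincides with the honest lifted field $h_{\p{F}}$, which is smooth — in particular $\Lip(\gamma-1)$ — there, by \Cref{lem:product_lipgamma}. Let $\widetilde{h}$ be a global $\Lip(\gamma-1)$ extension of $h_{\p{F}}|_{\drive \oplus N}$, furnished by \Cref{thm:modify_lipgamma}. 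By \Cref{def:I_FM} and \Cref{thm:universal_limit}, the coupled $M$-solution $\pp{Z} = (\p{X},\pp{Y})$ is the unique solution of $\pp{Z} = \int h_M(\pp{Z})\,d\pp{Z}$ with $\pi_{\drive}(\pp{Z}) = \p{X}$, and the level-one component of $\pp{Z}$ takes values in $\drive \oplus N$ (its $\ssol$-part is $\p{Y}_{0,\cdot} - \bone$, which lies near $R$). Since the rough integral of \Cref{thm:rough_integral} is local — $\cI_g(\pp{Z}) = \int g(\pp{Z})\,d\pp{Z}$ depends on $g$ only through a neighbourhood of the range of the level-one component of $\pp{Z}$ — we get $\pp{Z} = \int \widetilde{h}(\pp{Z})\,d\pp{Z}$ as well. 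Reading off the $\ssol$-component of this identity and using the explicit form \eqref{eq:pF_def} of $\p{F}$ (together with $\p{F}^0 = 0$), it follows that the cascade \eqref{eq:expanded_pd} holds along $\p{Y}_{0,\cdot}$, with $(\p{Y}_{0,\cdot})^{\emptyset} \equiv 1$.

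\textbf{Step 2: the solution is a truncated signature, and the identities follow.} The push-forward $\pi_V(\pp{Y}) \in \GRP_p(V)$ of $\pp{Y}$ under the level-one projection $\ssol \to V$ has components $\pi_V(\pp{Y})^{(i_1,\ldots,i_k)} = \pp{Y}^{((i_1),\ldots,(i_k))}$, which identifies the leftmost term of \eqref{eq: signatures of the lift} with a signature component of $\pi_V(\pp{Y})$; moreover $(\p{Y}_{0,\cdot})^{1}$ coincides with the level-one component of $\pi_V(\pp{Y})$. The cascade \eqref{eq:expanded_pd} is exactly the system of rough differential equations characterising the truncated signature, so by uniqueness $\p{Y}_{0,t}$ equals the level-$\leq q$ truncation of $\pi_V(\pp{Y})_{0,t}$; in particular $(\p{Y}_{0,t})^{\emptyset} = 1$ and $(\p{Y}_{0,t})^{(i_1,\ldots,i_k)} = \pi_V(\pp{Y})^{(i_1,\ldots,i_k)}_{0,t} = \pp{Y}^{((i_1),\ldots,(i_k))}_{0,t}$, which is \eqref{eq: signatures of the lift} at $s = 0$ and, read across all $k$, the identification $\p{Y} = \pi_V(\pp{Y})$. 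For general $(s,t) \in \Delta_T$ I would propagate the $s = 0$ identity using Chen's identity simultaneously in $T\ps{\ssol}$ (for the leftmost term), in the truncated tensor algebra $\ssol$ (for $\p{Y}_{s,t}$), and in $T\ps{V}$ (for $\pi_V(\pp{Y})$); the three are compatible because $\pp{Y}$ is geometric (\Cref{def:grp}) and, as just shown, $\p{Y}_{0,\cdot}$ is a genuine truncated signature. The middle equality in \eqref{eq: signatures of the lift} is then immediate from the definition $\p{Y} = \pp{Y}^1 + \bone$ together with $(\pp{Y}^1)^{\emptyset} \equiv 0$ (itself a consequence of $\p{F}^0 = 0$ along the solution).

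\textbf{Main obstacle.} The crux is Step 1 — genuinely cashing in ``$\tens_M = \tens$ along the solution''. The hypothesis controls only the pointwise size of the level-one path $\p{Y}_{0,\cdot}$, not that of the full coupled rough path $\pp{Z}$, so one cannot simply invoke a global boundedness estimate; instead one needs locality of the rough integral (that $\cI_g(\pp{Z})$ is unchanged when $g$ is altered away from the range of $\pp{Z}$'s level-one component), which is intuitive from Lyons' almost-multiplicative-functional construction but still deserves a short argument, plus the verification that a global $\Lip(\gamma-1)$ extension of $h_{\p{F}}|_{\drive \oplus N}$ exists. Everything afterwards is bookkeeping of the two tensor gradings and the routine fact that the truncated signature is the unique solution of the cascade \eqref{eq:expanded_pd}.
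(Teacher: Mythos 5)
Your proposal takes a genuinely different route from the paper's proof, and the two places you yourself flag as needing work are exactly where the gaps are. The paper never argues at the rough level: it first takes $p=1$, where the coupled-solution identity $\pp{Z}^1_{s,t}=\int_s^t h_M(\pp{Z}^1_{0,u})\,d\pp{Z}^1_{0,u}$ is a classical Stieltjes integral, so the hypothesis $\|\p{Y}_{0,u}\|<M$ lets one replace $h_M$ by the honest lifted field pointwise under the integral sign with no locality lemma at all; the identities then follow by induction on the word length, comparing the resulting iterated integrals with the defining iterated integrals of the outer levels of $\pp{Y}$. The general case is obtained by approximating $\p{X}$ in $d_p$ by lifts of bounded-variation paths, observing that the strict bound $\|\p{Y}_{0,t}\|<M$ survives to the approximants for large $n$ by continuity of $I_{F,M}$ (part (2) of \Cref{thm:universal_limit}), and passing to the limit, invoking continuity of the signature extension for the outer levels $\lfloor p\rfloor<k\le q$. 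This limiting argument is precisely what lets the paper avoid both of your hard points. Your Step 1 needs locality of the rough integral under modification of the vector field away from the trace of $\pp{Z}^1_{0,\cdot}$ — true, and short to prove from the almost-multiplicative-functional construction since the approximating functional only sees the Taylor data of the $1$-form along that trace, but you do not supply it. Your Step 2's ``by uniqueness the solution of the cascade is the truncated signature'' is not routine for $p\ge 2$: the integral $\int \p{Y}^{k-1}\otimes d\p{Y}^1$ is not determined by the path $\p{Y}^1$ alone but by a rough lift of it, so you must specify over which rough structure the cascade is solved and why it agrees with the pushforward structure $\pi_V(\pp{Y})$; this is essentially \Cref{thm:extension} in disguise. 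Both gaps are repairable, but the smooth-approximation route disposes of them for free.

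Separately, the Chen-propagation you defer for general $(s,t)$ cannot be carried out, and you should restrict to intervals based at $0$. The quantity $\pp{Y}^{((i_1),\ldots,(i_k))}_{s,t}$ is a component of the outer level-$k$ signature and is multiplicative under Chen's identity, whereas $\pp{Y}^{((i_1,\ldots,i_k))}_{s,t}$ sits in the outer level-one part and is additive; for $k\ge 2$ the cross terms $\pp{Y}^{((i_1),\ldots,(i_j))}_{s,t}\,\pp{Y}^{((i_{j+1}),\ldots,(i_k))}_{t,u}$ do not vanish, so the two transformation rules are incompatible away from $s=0$. Concretely, for $\dim\sol=1$ and $k=2$ the based-at-zero identity gives $\pp{Y}^{((1,1))}_{s,t}=\tfrac12\bigl((Y_{0,t})^2-(Y_{0,s})^2\bigr)$ while $\pp{Y}^{((1),(1))}_{s,t}=\tfrac12 (Y_{0,t}-Y_{0,s})^2$, which differ by $Y_{0,s}(Y_{0,t}-Y_{0,s})$. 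The based-at-zero version is all that is required for the conclusion $\p{Y}=\pi_V(\pp{Y})$ once both sides are read as one-parameter paths via \Cref{remark: moving between interval and Delta}, and it is what your Step 2 (and the paper's induction) actually delivers; just do not promise the two-parameter identity.
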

\begin{proof} 
The proof is given in \Cref{appendix: proofs}.
\end{proof}

This result shows that under a certain condition on the  $M$-solution $\p{Y}$, it coincides with the projection of the lifted $M$-solution $\pp{Y}$  onto $\sol$. 
In particular, this justifies why $\p{Y}$ can be interpreted as a geometric $p$-rough path. 
Now, we can state a reformulation of the Universal Limit Theorem from~\Cref{thm:universal_limit} in terms of the $M$-solutions.
\begin{theorem} \label{thm: universal limit theorem 2}
    Assume $F: \ssol \to {\rm L}(\drive,\sol)$ is a $\Lip(\gamma)$ function, and $M>0$. Define
    \begin{align}
        G_M\coloneqq \{\p{X}\in \GRP_p(\drive) \, : \,  \|I_{F,M}(\p{X})_{0,t}^1 + \bone\| < M \text{ for all } t\in [0,T]\}.
    \end{align}
    Then, the following hold:
    \begin{enumerate}
        \item For any $\p{X} \in G_M$, the unique $M$-solution $\p{Y}$ to \eqref{eq:original_rde} is a geometric $p$-rough path, i.e. $\p{Y} \in \GRP_p(\sol)$. In particular, we will call $\p{Y}$ a \emph{solution}\footnote{Here, we omit the reference to $M$, as we have restricted the driving noise such that~\Cref{prop: signatures of the lift} holds, and the first level of $\pp{Y}$ is correctly interpreted as the signature of some underlying path $\p{Y}^1$. This definition of solution should be viewed as the path-dependent analogue of a solution to an ordinary RDE in~\Cref{def:rde_sol}.} to~\eqref{eq:original_rde}.
        \item The map  $J_{F,M}: G_M \to \GRP_p(\sol)\label{pg:J_FM}$, which sends $\p{X}$ to $\p{Y}$, is continuous in the $p$-variation topology. 
        \item
        For any $\p{X}\in \GRP_p(\drive)$, let $\{\pp{Z}(r)\}_{r\in \Z_{\geq 0}}$ be the sequence of $M$-Picard iterations of~\eqref{eq:original_rde}. For all $r \in \Z_{\geq 0}$, define $\pp{Y}(r) \coloneqq \pi_{\ssol}(\pp{Z}(r))$ and  $\p{Y}(r) \coloneqq \pp{Y}(r)^1 + \bone$.
        Then $\p{Y} = \lim_{r \to \infty} \p{Y}(r)$ in $p$-variation norm, where $\p{Y}(r)_{0,\cdot}$ and $\p{Y}_{0,\cdot}$ are considered as paths in $C^{p-\var}([0,T], \ssol)$. 
        \item 
        Let $\omega$ be a control of the $p$-variation of $\p{X} \in \GRP_p(\drive)$. For all $\rho>1$, there exists $T_\rho\in (0,T]$ such that for all $n\in \mathbb{Z}_{\geq 0}$,
            \begin{align}
            \left\| \p{Y}(r)_{s,t} - \p{Y}(r+1)_{s,t} \right\| \leq 2 \rho^{-r} \frac{\omega(s,t)^{\frac{1}{p}}}{\beta \left(\frac{1}{p}\right)!} \ 
            \text{ for all $(s,t)\in \Delta_{T_\rho}$},
        \end{align}
        where $T_\rho$ depends on $M$, $\|F\|_{\Lip(\gamma)}$, $\dim \sol$, $p$, $\gamma$, and $\omega$.
    \end{enumerate}
\end{theorem}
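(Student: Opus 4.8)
The plan is to reduce all four assertions to the Universal Limit Theorem, \Cref{thm:universal_limit}, applied not to $F$ directly but to a shifted version of the modified lifted vector field. First I would set $F'(\p{s}) \coloneqq \p{F}_M(\p{s}+\bone)$ for $\p{s}\in\ssol$; since translation by the fixed element $\bone$ is an isometry of $\ssol$, \Cref{cor:general_lifted_vf} shows $F':\ssol\to\LL(\drive,\ssol)$ is $\Lip(\gamma)$ with $\|F'\|_{\Lip(\gamma)} = \|\p{F}_M\|_{\Lip(\gamma)}$. Comparing \eqref{eq:h_m_def} with \Cref{def:rde_sol}, the vector field $h_M$ is exactly the coupling vector field of $F'$ with initial value $\mathbf{0}_{\ssol}$, and the $M$-Picard iterations $\pp{Z}(r)$ of \eqref{eq:pd_picard} are exactly the iterations \eqref{eq:original_picard} for that vector field. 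Hence \Cref{thm:universal_limit}, applied with $\sol$ replaced by $\ssol$, $F$ replaced by $F'$, and $\zeta = \mathbf{0}_{\ssol}$, yields at once: a unique coupled $M$-solution $\pp{Z}\in\GRP_p(\drive\oplus\ssol)$; the lifted $M$-solution $\pp{Y} = I_{F,M}(\p{X}) = \pi_{\ssol}(\pp{Z})$, with $I_{F,M}$ continuous in $p$-variation; the convergence $\pp{Y}(r) \coloneqq \pi_{\ssol}(\pp{Z}(r)) \to \pp{Y}$ in $d_p$; and, for every $\rho>1$, a time $T_\rho\in(0,T]$ (depending only on $\|\p{F}_M\|_{\Lip(\gamma)}$, $p$, $\gamma$, and $\omega$) such that $\|\pp{Y}(r)^k_{s,t} - \pp{Y}(r+1)^k_{s,t}\| \leq 2^k\rho^{-r}\,\frac{\omega(s,t)^{k/p}}{\beta(k/p)!}$ on $\Delta_{T_\rho}$ for $k=0,\dots,\lfloor p\rfloor$, whenever $\omega$ controls the $p$-variation of $\p{X}$ (equivalently of $\pp{Z}(0)=(\p{X},\mathbf{0}_{\ssol})$, whose $\ssol$-part is trivial). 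To get the dependence of $T_\rho$ claimed in part (4), I would then note that $\|\p{F}_M\|_{\Lip(\gamma)}$ is controlled by $M$, $\|F\|_{\Lip(\gamma)}$, $\dim\sol$ and the fixed $q$, by unwinding \Cref{cor:general_lifted_vf}, \Cref{lem:product_lipgamma} and \Cref{def:modified_tensor_mult} (the latter via \Cref{thm:modify_lipgamma}).

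For parts (1) and (2) it remains to identify the $M$-solution $\p{Y}$ with a geometric $p$-rough path in $\sol$. By definition of $G_M$, if $\p{X}\in G_M$ then $\|\p{Y}_{0,t}\| = \|I_{F,M}(\p{X})^1_{0,t}+\bone\| < M$ for all $t\in[0,T]$, so the hypothesis of \Cref{prop: signatures of the lift} holds, giving for $(s,t)\in\Delta_T$ and $1\leq k\leq q$ the identities $\pp{Y}^{((i_1),\dots,(i_k))}_{s,t} = \pp{Y}^{((i_1,\dots,i_k))}_{s,t} = \p{Y}^{(i_1,\dots,i_k)}_{s,t}$; equivalently $\p{Y} = \pi_V(\pp{Y})$, the image of $\pp{Y}$ under the bounded linear level-one projection $\pi_V:\ssol\to V$ applied at every tensor degree (then truncated to degrees $\leq\lfloor p\rfloor\leq q$). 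I would then invoke the elementary fact that applying a bounded linear map at each tensor degree carries the signature of a bounded-variation path to the signature of a bounded-variation path and is Lipschitz for the $p$-variation metric, so it maps $\GRP_p(\ssol)$ continuously into $\GRP_p(\sol)$ (cf.\ \Cref{def:grp}). This gives $\p{Y}=\pi_V(\pp{Y})\in\GRP_p(\sol)$, which is part (1); and on $G_M$ we then have $J_{F,M} = \pi_V\circ I_{F,M}$, a composition of the continuous maps $I_{F,M}$ (\Cref{def:I_FM}) and $\pi_V$, so $J_{F,M}$ is continuous in the $p$-variation topology, which is part (2).

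Parts (3) and (4) I would deduce by reading off the level-one data, noting that they hold for arbitrary $\p{X}\in\GRP_p(\drive)$, exactly as in \Cref{thm:universal_limit}. For $s<t$, the increment satisfies $\p{Y}(r)_{s,t} = \p{Y}(r)_t-\p{Y}(r)_s = \pp{Y}(r)^1_{0,t}-\pp{Y}(r)^1_{0,s} = \pp{Y}(r)^1_{s,t}$ (the constant $\bone$ cancels, and the level-one component of a rough path is additive), and similarly $\p{Y}_{s,t} = \pp{Y}^1_{s,t}$. The degree-one contribution to $d_p(\pp{Y}(r),\pp{Y})$ is precisely $|\pp{Y}(r)^1_{0,\cdot}-\pp{Y}^1_{0,\cdot}|_{p,[0,T]}$, which therefore tends to $0$; since $\p{Y}(r)_0 = \p{Y}_0 = \bone$, this is exactly convergence $\p{Y}(r)_{0,\cdot}\to\p{Y}_{0,\cdot}$ in $C^{p-\var}([0,T],\ssol)$, proving part (3). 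Taking $k=1$ in the rate estimate of the first paragraph gives $\|\p{Y}(r)_{s,t}-\p{Y}(r+1)_{s,t}\| = \|\pp{Y}(r)^1_{s,t}-\pp{Y}(r+1)^1_{s,t}\| \leq 2\rho^{-r}\,\frac{\omega(s,t)^{1/p}}{\beta(1/p)!}$ on $\Delta_{T_\rho}$, which is part (4).

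The one genuinely delicate point is the geometricity transfer in part (1): this is where the hypothesis $\p{X}\in G_M$ is used essentially, since only when $\|\p{Y}_{0,t}\|<M$ does \Cref{prop: signatures of the lift} guarantee that the level-one part of $\pp{Y}$ is genuinely the signature of an underlying path (so that $\pi_V(\pp{Y})$ is the right object); off $G_M$ the modified vector field $\p{F}_M$ may disagree with the true lift $\p{F}$ on the relevant region and this identification can fail. Everything else — checking that $h_M$ coincides with the coupling vector field of $F'$ so that \Cref{thm:universal_limit} transfers verbatim, and tracking $\|\p{F}_M\|_{\Lip(\gamma)}$ to obtain the stated dependence of $T_\rho$ — is routine bookkeeping.
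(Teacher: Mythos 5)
Your proposal is correct and follows essentially the same route as the paper's proof: apply the Universal Limit Theorem to the lifted equation with the modified vector field, use \Cref{prop: signatures of the lift} to identify $\p{Y}=\pi_V(\pp{Y})$ for part (1), write $J_{F,M}=\pi_V\circ I_{F,M}$ for part (2), and restrict to the level-one component for parts (3) and (4). The paper's proof is terser but contains no additional ideas; your extra bookkeeping (verifying $h_M$ is the coupling vector field of the shifted $F'$, and tracking the dependence of $T_\rho$ on $\|\p{F}_M\|_{\Lip(\gamma)}$) is consistent with what the paper implicitly relies on.
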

\begin{proof}
        Part (1) is straightforward since by \Cref{prop: signatures of the lift}, $\p{Y} = \pi_\sol(\pp{Y})$ is the projection of a geometric rough path $\pp{Y} \in \GRP_p(\ssol)$. Part (2) is also immediate from the original Universal Limit Theorem in~\Cref{thm:universal_limit} and the fact that on $G_M$, we have $J_{F,M} = \pi_V \circ I_{F,M}$. For part (3), we note that by~\Cref{thm:universal_limit}, we get that $I_{F,M}(\p{X}) = \lim_{r \to \infty} \pp{Y}(r)$ in the $p$-variation rough path topology. Then, restricting this to the level 1 component of $\pp{Y}$ and $\pp{Y}(r)$, we obtain the desired result. 
        Finally, part (4) also follows directly from the analogous result in part (4) of~\Cref{thm:universal_limit} and considering the level 1 component. 
\end{proof}

\begin{remark}
    An important point in this result is that $\p{Y}(r) \coloneqq \pp{Y}(r)^1 + \bone$ in parts (3) and (4) is \emph{not} a geometric $p$-rough path in general; in fact, it may not even be multiplicative. Part (1) shows that in the limit, $\p{Y}$ coincides with $\pi_\sol(\pp{Y}) = \pi_{\sol}(\pp{Z})$, but this is not true at finite Picard iterations. Thus, we treat $\p{Y}(r)$ as a bounded $p$-variation path in $C^{p-\var}([0,T], \ssol)$. 
\end{remark}

\begin{remark}\label{rem: stochastic interpretation}
    Let $(\samplespace, \mathbf{\mathcal{F}}, \mathcal{P}) \label{pg:prob_space}$ be a probability space, and $\GRP_p(\drive)$ be equipped with the Borel $\sigma$-field. When the driving noise $\p{X}:\samplespace \to \GRP_p(\drive)$ of the RDE is a stochastic process, the $M$-solution is also a stochastic process. 
    Indeed, by \Cref{remark: moving between interval and Delta}, the map $J_{F,M}$ can be extended to a continuous map $J_{F,M} : \GRP_p(U) \to C^{p-\var}([0,T], \ssol)$, where $C^{p-\var}([0,T], \ssol)$ is equipped with the $p$-variation norm.
    Then given the Borel $\sigma$-field on  $C^{p-\var}([0,T], \ssol)$, the map $J_{F,M} \circ \p{X}$ is measurable. 
\end{remark}

\section{Parametrized Signature SDEs} \label{sec:parametrized_signature_sde}

In this section, we will consider a specific model of path-dependent rough differential equations, where the underlying vector field $F: \ssol \to \LL(\drive, \sol)$ is \emph{affine} with respect to the signature of the solution. 
This choice of vector field is motivated by the universal approximation property of signatures; see~\Cref{thm:univ_char} and~\Cref{rem:univ_char}.
To leverage this property, we design our RDE so that the underlying path in the solution is $\overline{Y}_t = (t,Y_t)$ and the vector field is a linear functional of the truncated signatures of $\overline{Y}$. To simplify the notation, we will denote all parametrized paths without the overline in the remainder of this article.\medskip

Suppose that $\p{W} \in \GRP_p(\mathbb{R}^n)$ is a deterministic rough path, and $A_\theta : \ssol \to \sol$ and $B_\theta \in \LL(\ssol, \LL(\mathbb{R}^n, \sol))$ are vector fields parametrized by $\theta \in \Theta$, where $\Theta$ is the parameter space. We wish to consider the solution $\p{Y}  \in \GRP_p(\sol)$ to the rough differential equation
\begin{align} \label{eq:parametrizd_pdrde}
    dY_t = A_\theta(\p{Y}_t) dt + B_\theta(\p{Y}_t)d\p{W}_t.
\end{align}
The vector fields are affine functionals of the components of $\p{Y}$, and we assume $\sol  \cong \R^{m+1}$ and $\drive \cong \R^{n+1}$, equipped with their standard bases, for the rest of the section. We can express the vector fields as
\begin{align} \label{eq:AB_vector_fields}
    A_\theta(\p{Y}_t) = \pmat{1 \\ \langle \theta_{1,0}, \p{Y}_t \rangle\\ \vdots \\\langle \theta_{m,0}, \p{Y}_t \rangle} \andd B_\theta(\p{Y}_t) = \pmat{0 & \cdots & 0 \\\langle \theta_{1,1}, \p{Y}_t\rangle & \cdots & \langle \theta_{1,n}, \p{Y}_t\rangle \\ \vdots &\ddots& \vdots \\ \langle \theta_{m,1}, \p{Y}_t\rangle & \cdots & \langle \theta_{m,n}, \p{Y}_t \rangle },
\end{align}
where $\theta_{i,j} \in \ssol \label{pg:parametrized_vf}$ for $i = 1, \ldots , m$, and $j = 0, \ldots n$. Thus, the parameter space is $\Theta \coloneqq\{(\theta_{i,j})_{i\in[m],j\in[n]_0} : \theta_{i,j} \in \ssol\} =  \Mat_{m, n+1}(\ssol)$. \medskip

To simplify our notation such that it coincides with the notation used in the previous section, we define $\p{X} = (t, \p{W}) \in \GRP_p(\drive)$. More precisely, $\p{X}$ is a canonical $p$-rough path lift of the path $(t, W_{0,t}^1)$; see~\Cref{rem:time_parametrization}. 
Therefore, we can express the above differential equation as
\begin{align} \label{eq:F_theta_def}
    dY_t = F_{\theta}(\p{Y}_t) d\p{X}_t \quad \text{where} \quad 
    F_{\theta}(\p{Y}_t) \coloneqq \pmat{A_\theta(\p{Y}_t) & B_\theta(\p{Y}_t)}.
\end{align}
In this case, the vector field $F_\theta$ is unbounded because it is affine. We will modify this vector field in the same way as in~\Cref{def:modified_tensor_mult}. In particular, for $M > 0$, we set $K_M \coloneqq \{\p{s} \in \ssol \, : \, \|s\| \leq M\} \subset \ssol$ as before, and define $F_{\theta, M} : \ssol \to \LL(\drive, \sol)$ to be the extension from~\Cref{thm:modify_lipgamma} of the restriction $F_\theta|_{K_M} : K_M \to \LL(\drive, \sol)$ to $K_M \subset \ssol$. Thus, we in fact consider the path-dependent RDE 
\begin{align}
    dY_t = F_{\theta, M}(\p{Y}_t) d\p{X}_t,
\end{align}
where we define the corresponding lifted vector field $\p{F}_{\theta, M} : \ssol \to \LL(\drive, \ssol)$ as
\begin{align} \label{eq:bounded_lifted_vf}
    \p{F}_{\theta,M} (\p{s}) \coloneqq \tens_M(\p{s}) \circ F_{\theta, M} (\p{s}) \quad
    \text{for all $\p{s}\in \ssol$},
\end{align}
with $\tens_M$ defined in~\Cref{def:modified_tensor_mult}. 
Note that this definition ensures that $M$-solutions with respect to $\p{F}_{\theta,M}$ will not exhibit blow-ups at finite time. However, in order to ensure that the $M$-solutions are solutions in the sense of~\Cref{thm: universal limit theorem 2}, we consider sufficiently bounded driving signals. The following result shows that we can place a uniform bound on the driving signals to obtain a uniform bound on the solutions and the corresponding Picard iterations. 
For the remainder of the section, our convention is to index the coordinates of $\sol = \R^{m+1}$ and $\drive = \R^{n+1}$ with $[m]_0\coloneqq \{0,\ldots,m\}$ and $[n]_0 \coloneqq \{0,\ldots,n\}$ respectively.

\begin{proposition} \label{prop: N(theta) and M(theta)}
    For all $\theta \in \Theta$ and $M> 0$, let $\p{Y}_{\theta, M}$ be the $M$-solution, and $\{\pp{Z}_{\theta, M}(r)\}_{r\in \mathbb{Z}_{\geq 0}}$ be the sequence of $M$-Picard iterations of the path-dependent RDE
    \begin{align}
        dY_t = F_{\theta, M}(\p{Y}_t) d\p{X}_t.
    \end{align}
    Define $\pp{Y}_{\theta, M}(r) \coloneqq \pi_{\ssol}(\pp{Z}_{\theta, M}(r))$, and $\p{Y}_{\theta, M}(r)\coloneqq \pp{Y}_{\theta,M}(r)^1 + \bone$ for all $r\in \mathbb{Z}_{\geq 0}$. Let 
    \begin{align} \label{eq: rho and C}
        \rho > 1 \text{ and } C \geq \max_{1\leq i \leq \lfloor p \rfloor} \left(\beta \left(\frac{i}{p}\right)!\right)^\frac{p}{i}. 
    \end{align}
    There exist functions $\cN, \cM : \R_+ \to \R_+$ such that if $d_p(\p{X}, \bzero_\drive) \leq \cN(\|\theta\|)$, then for all $(s,t) \in \Delta_T$ and $r \in \Z_{\geq 0}$, we have
    \begin{align}
        &\left\| \p{Y}_{\theta,\cM(\|\theta\|)}(r)_{s,t} - \p{Y}_{\theta,\cM(\|\theta\|)}(r+1)_{s,t} \right\| \leq 2 \rho^{-r} \frac{C^{\frac{1}{p}} d_p(\p{X}, \mathbf{0}_\drive)}{\beta\left(\frac{1}{p}\right)!}, 
            \\
    &\left\|\p{Y}_{\theta,\cM(\|\theta\|)}(r)_{s,t}\right\| < \cM(\|\theta\|), \text{ and } \left\|(\p{Y}_{\theta, \cM(\|\theta\|)})_{s,t}\right\|< \cM(\|\theta\|).
    \end{align}
    The functions $\cM, \cN : \R_+ \to \R_+$ depend only on $p$, $\gamma$, $\rho$, $C$ and $\sol$. Furthermore, $\cN$ and $\cM$ are non-increasing.
\end{proposition}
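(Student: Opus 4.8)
The plan is to reduce the statement to the Universal Limit Theorem in the form of \Cref{thm: universal limit theorem 2}(4), which already produces, for each fixed vector field and each $\rho>1$, a contraction estimate of the required shape on \emph{some} subinterval $\Delta_{T_\rho}$. Two things are then needed: (i) control the $\Lip(\gamma)$-norm of the vector field $h_M$ of \Cref{def:M_solution} uniformly in terms of $M$ and $\|\theta\|$, so that the horizon $T_\rho$ is governed by an explicit quantity; and (ii) force $T_\rho=T$ by restricting the size $\delta\coloneqq d_p(\p{X},\bzero_\drive)$ of the driving signal, and convert the control-based right-hand side of \Cref{thm: universal limit theorem 2}(4) into one that only sees $\delta$. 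The functions $\cM$ and $\cN$ are then chosen by a short self-consistency argument.

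First I would estimate the Lipschitz norm. Because $A_\theta$ and $B_\theta$ are affine in the signature, the restriction $F_\theta|_{K_M}$ is smooth with $\sup_{K_M}\|F_\theta\|\le c_0(1+\|\theta\|M)$, $\sup_{K_M}\|F_\theta^{(1)}\|\le c_0\|\theta\|$ and vanishing higher derivatives, so $\|F_\theta|_{K_M}\|_{\Lip(\gamma)}\le c_1\bigl(1+\|\theta\|(1+M)\bigr)$ with $c_0,c_1$ depending only on the (fixed) model spaces; by the extension estimate of \Cref{thm:modify_lipgamma}, $F_{\theta,M}$ satisfies the same bound up to a universal factor. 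Likewise $\tens|_{K_M}$ is bounded Lipschitz with $\|\tens|_{K_M}\|_{\Lip(\gamma)}\le c_2(1+M)$, hence the same for $\tens_M$. Applying \Cref{lem:product_lipgamma} to $\p{F}_{\theta,M}=\tens_M\circ F_{\theta,M}$ and adding the constant $\Id_\drive$ block, I obtain $\|h_M\|_{\Lip(\gamma)}\le\Phi(M,\|\theta\|)$, where $\Phi$ is a polynomial with nonnegative coefficients, nondecreasing in each argument, whose coefficients depend only on $p$, $\gamma$, $\sol$, $\drive$. Since $F_{\theta,M}$ is $\Lip(\gamma)$, the $M$-solution and the $M$-Picard iterations are well-defined (\Cref{def:I_FM}, \Cref{thm:universal_limit}).

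Next I would pass from $\delta$ to a control. Starting from the level-wise $(p/k)$-variations of $\p{X}$, one builds a control $\omega$ for the $p$-variation of $\p{X}$ whose total mass obeys $\omega(0,T)\le C\,\delta^{p}$; here the hypothesis $C\ge\max_{1\le i\le\lfloor p\rfloor}(\beta(i/p)!)^{p/i}$ is exactly what makes the normalization $\|\p{X}^k_{s,t}\|\le\omega(s,t)^{k/p}/(\beta(k/p)!)$ hold. Inspecting the proof of \Cref{thm:universal_limit}, the horizon $T_\rho$ can be taken equal to $T$ as soon as $\omega(0,T)$ lies below a threshold $\kappa\bigl(\|h_M\|_{\Lip(\gamma)},p,\gamma,\rho\bigr)$ that is independent of $T$ and nonincreasing in its first slot. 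Hence, writing $\psi(M,\|\theta\|)\coloneqq\bigl(\kappa(\Phi(M,\|\theta\|),p,\gamma,\rho)/C\bigr)^{1/p}$, which is nonincreasing in $M$ and in $\|\theta\|$, the condition $\delta\le\psi(M,\|\theta\|)$ gives $T_\rho=T$, and \Cref{thm: universal limit theorem 2}(4) applied on all of $\Delta_T$ together with $\omega(s,t)^{1/p}\le\omega(0,T)^{1/p}\le C^{1/p}\delta$ yields the first displayed inequality of the proposition for all $(s,t)\in\Delta_T$ and all $r$. Summing the geometric series and using that $\p{Y}_{\theta,M}(0)$ is the constant path $\bone$ (since $\pp{Z}_{\theta,M}(0)=(\p{X},\bzero_\ssol)$), I get $\|\p{Y}_{\theta,M}(r)_{s,t}\|\le\lambda\delta$ for all $r$, and in the limit $\|(\p{Y}_{\theta,M})_{s,t}\|\le\lambda\delta$, where $\lambda\coloneqq\tfrac{2\rho\,C^{1/p}}{(\rho-1)\,\beta(1/p)!}$ depends only on $p,\rho,C$; the value of the $M$-solution path at each $t$ then has norm at most $\lambda\delta+\|\bone\|$.

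Finally I would close the loop. It suffices to take $\cM$ equal to a constant exceeding $\|\bone\|$, say $\cM\equiv 1+\|\bone\|$, and to set $\cN(\|\theta\|)\coloneqq\min\bigl\{\psi(\cM,\|\theta\|),\ (\cM-\|\bone\|)/(2\lambda)\bigr\}$, which is strictly positive, is the minimum of a function nonincreasing in $\|\theta\|$ and a constant (hence nonincreasing), and depends only on $p,\gamma,\rho,C,\sol$ (and $\drive$) — in particular not on $T$. If $\delta=d_p(\p{X},\bzero_\drive)\le\cN(\|\theta\|)$, then $\delta\le\psi(\cM,\|\theta\|)$, so the contraction estimate holds on all of $\Delta_T$; and $\lambda\delta\le(\cM-\|\bone\|)/2<\cM$ yields $\|\p{Y}_{\theta,\cM}(r)_{s,t}\|,\,\|(\p{Y}_{\theta,\cM})_{s,t}\|<\cM$, while the path value at each $t$ has norm $\le\lambda\delta+\|\bone\|<\cM$, so $\p{X}\in G_{\cM}$ and \Cref{prop: signatures of the lift} applies — i.e.\ these really are solutions and Picard iterates in the sense of \Cref{thm: universal limit theorem 2}. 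The main obstacle is the circularity between $\cM$ — which enters the definition of $F_{\theta,M}$, hence of $\|h_M\|_{\Lip(\gamma)}$ and of the admissible noise size — and the a posteriori requirement that the solution stay inside the ball of radius $\cM$; this is what forces the monotonicity bookkeeping above and the option of fixing $\cM$ first. A secondary delicate point is upgrading \Cref{thm:universal_limit}'s ``there exists $T_\rho\in(0,T]$'' to ``$T_\rho=T$'': one must check that the smallness threshold in its proof is $T$-independent, so that it is the \emph{size} of $\p{X}$, not the time horizon, that is shrunk — which is also what keeps $\cM,\cN$ independent of $T$. The remaining work is routine bookkeeping with the additive $\bone$ and the tensor-norm constants.
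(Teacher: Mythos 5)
Your proposal is correct and follows essentially the same route as the paper's proof: construct a control with $\omega(0,T) \le C\, d_p(\p{X},\bzero)^p$, trace through the proof of the Universal Limit Theorem to see that $T_\rho = T$ once $\omega(0,T)$ falls below a $T$-independent threshold depending monotonically on $\|h_M\|_{\Lip(\gamma)}$ (hence on $M$ and $\|\theta\|$), and then break the circularity between $M$ and the a posteriori solution bound. The only material deviation is that you fix $\cM$ to be a constant and solve for $\cN$, whereas the paper sets $\cM(\mu) = 1 + \mathrm{const}\cdot\cN(\mu)$; both choices are admissible (a constant is non-increasing), and your small bookkeeping slip — $\p{Y}_{\theta,M}(0)_{s,t}$ equals $\bone$, not $0$, so the uniform Picard bound is $\|\bone\| + \lambda\delta$ rather than $\lambda\delta$ — is already absorbed by your final inequality $\lambda\delta \le (\cM - \|\bone\|)/2$.
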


\begin{proof}
The proof is given in \Cref{appendix: proofs}.
\end{proof}

Throughout the remainder of this paper, we fix 
\begin{align} \label{eq: C and rho}
    \rho> 1  \text{ and } C =  \max_{1\leq i \leq \lfloor p \rfloor} \left(\beta \left(\frac{i}{p}\right)!\right)^\frac{p}{i}.
\end{align}
We assume $\p{X}: \samplespace \to \ \GRP_p(\drive)$ is a stochastic process, and we have observed trajectories of the $\cM(\|\theta\|)$-solutions of the path-dependent SDE 
\begin{align}\label{eq: parameterized SDE}
    dY_t = F_{\theta,\cM(\|\theta\|)}(\p{Y}_t) \ d\p{X}_t
\end{align}
for some (partially) unknown parameter $\theta \in \Theta$.  In other words, samples of the stochastic process
$\p{Y}_{\theta} \coloneqq J_{F_{\theta,\cM(\|\theta\|)}, \cM(\|\theta\|)}\circ \p{X}$ have been observed; see \Cref{rem: stochastic interpretation}.
Our goal is to estimate the parameter $\theta$.
Moreover, for all $\theta\in \Theta$,
we let $\{ \pp{Z}_{\theta}(r) \}_{r\in \mathbb{Z}_{\geq 0}}$ be the sequence of the $\cM(\|\theta\|)$-Picard iterations of \eqref{eq: parameterized SDE}, and define $\pp{Y}_{\theta} (r) \coloneqq \pi_{\ssol}(\pp{Z}_\theta(r))$ and $\p{Y}_{\theta}(r) \coloneqq \pp{Y}_\theta(r)^1 + \bone$ for all $r \in \mathbb{Z}_{\geq 0}$.  
\section{Expected Signature Matching Method} \label{sec:esmm}

In this section, we turn to our main problem of interest: estimating the parameters of \emph{path-dependent} stochastic differential equations. Our methods are a generalization of~\cite{papavasiliou_parameter_2011}, which studies parameter estimation of \emph{path-independent} rough differential equations using a moment-matching approach, and we will briefly review their approach. \medskip

In~\cite{papavasiliou_parameter_2011}, the authors consider \emph{path-independent} stochastic differential equations of the form
\begin{align} \label{eq:path_ind_sde}
    dY_t = a_\theta(Y_t) dt + b_\theta(Y_t) \circ d\p{W}_t,
\end{align}
where 
$\p{W}$ is an $n$-dimensional Brownian motion, and $a_\theta: \sol \to \sol$ and $b_\theta: \sol \to \LL(\mathbb{R}^n, \sol)$ are path-independent polynomial vector fields parametrized by $\theta$. Suppose we observe $N$ trajectories $\{\p{Y}_{\theta_0}(\sample_i)\}_{i=1}^N \subset  \GRP_p(\sol)$, $\sample_i\in \samplespace$, sampled from the solutions to the SDE above at an unknown parameter $\theta_0 \in\Theta$. The aim is to compare the theoretical expected signature $ 
\E[\p{Y}_{\theta}]$ of the solution of~\eqref{eq:path_ind_sde} at parameter $\theta$, with the empirical expected signature
$
    \frac{1}{N} \sum_{i=1}^N \p{Y}_{\theta_0}(\sample_i)
$
in order to estimate the parameter $\theta_0\in \Theta$. While an explicit form of $\E[\p{Y}_\theta]$ is difficult to obtain in general,~\cite{papavasiliou_parameter_2011} uses the expected signature of Picard iterations of \eqref{eq:path_ind_sde},
$\E[\p{Y}_\theta(r)]$,
as an approximation, and finds that $\E[\p{Y}_\theta(r)]$ can be expressed as a polynomial in $\theta$ determined by the expected signature of the driving signal $\p{X} = (t, \p{W})$.\medskip
 
The aim of this section is to generalize the methodology from~\cite{papavasiliou_parameter_2011} to estimate the parameters $\theta \in \Theta$ of a signature SDE~\eqref{eq: parameterized SDE}, where the vector field is affine in signatures of the solution. 
We begin by showing that in this setting the theoretical expectation of the $r$th Picard iteration can also be expressed as a polynomial $P_r$ in $\theta$ determined by the expected signature $\E[\p{X}]$ of the driving signal; see \Cref{remark: comparison of polynomials}. Then, we show that our estimator is consistent in~\Cref{thm:consistency}. \medskip

We note that the expected signature of the solution of the SDE is uniquely determined by its distribution. Therefore, estimating the unknown parameters of the SDE by matching the theoretical and empirical expected signatures of the solution, identifies parameters up to their equivalence class under the distribution they induce on the solution. We show in \Cref{sec:non-identifiability} that distinct parameter sets may yield the same solution, and hence the same expected signature of the solution, for sufficiently bounded trajectories of the driving signal.

\begin{remark} \label{remark: on the other paper}
    While our vector fields are \emph{affine} with respect to the signature $\p{Y}$ of the solution, the setting in this paper is a generalization of the path-independent polynomial vector fields of~\cite{papavasiliou_parameter_2011} by using the shuffle product. For example, we have $\p{Y}_{0,t}^{(1,1)} = \frac{1}{2}(Y^{(1)}_t)^2$ for all $t\in T$; see~\Cref{experiment: 1 dimensional}.
    In addition, the polynomial vector fields considered in~\cite{papavasiliou_parameter_2011} are not  $\Lip(\gamma)$ a priori, and therefore, the authors believe a modification of these vector fields, similar to what is suggested in this article, is necessary to guarantee the existence and uniqueness of the solutions.\medskip

    Furthermore, the authors believe there is an error in~\cite[Equation 3.19]{papavasiliou_parameter_2011} in the proof of consistency of the estimator in the path-independent setting~\cite[Theorem 3.6]{papavasiliou_parameter_2011}. Their proof may be rectified by replacing their use of the Jacobian of $P_r$ with a matrix of derivatives of $P_r$, where each row is evaluated at a different point, assuming invertibility everywhere of such matrices, and an even stronger bound on the norm of their inverses. 
    However, we believe our proof of consistency in~\Cref{thm:consistency} can be adapted to their setting, which requires significantly fewer assumptions, as discussed in the introduction. 
\end{remark}

\subsection{Picard Iterations as Polynomials of Parameters} 
In order to approximate the expected signature of the solution to~\eqref{eq: parameterized SDE} with a polynomial expression in the parameters, we begin by studying the Picard iterations for a given $\p{X}\in \GRP(\drive)$. 
For $r \in \mathbb{Z}_{\geq 0}$, our aim is to express the path $\p{Y}_\theta(r)$ as a system of polynomials in $\theta$, where the coefficients are given by the signature of~$\p{X}$. 

\begin{notation}
    Let $\cA\subset \mathbb{Z}_{\geq 0}$, and $I$ be a \textit{multi-index} or \textit{word} in $\cA$, i.e. $I = (i_1,\ldots,i_\ell)\in \cA^\ell$ for some $\ell \in \mathbb{Z}_{\geq 0}$. Then we let $|I| \coloneqq \ell$ denote the length of the word. If $|I| \geq 1$, we write
    \begin{align}
        I = (I^-, I^f),
    \end{align}
    where $I^- = (i_1, \ldots, i_{\ell-1})$ is the word consisting of the first $\ell-1$ elements in $I$, and $I^f =(i_\ell)$ is a word of length $1$ consisting of the final element. For a word $I=(i_1,\ldots,i_\ell)$ in $\cA$ and a word $J = (j_1,\ldots,j_{\ell'})$ in $\{1,\ldots,\ell\}$, we define $I_J \coloneqq (i_{j_1}, \ldots, i_{j_{\ell'}})$. Note that the word in $\cA$ with length 0 is denoted by $\emptyset$. For a set $\cA$, we denote the set of words of length at most $q$ in $\cA \label{pg:cW_def}$ by
    \begin{align} \label{eq:words}
        \cW(\cA, q) \coloneqq \{ (i_1, \ldots, i_\ell) \in \cA^\ell \, : \, \ell \leq q\}.
    \end{align}
\end{notation}

\begin{example}
   $I = (4,0,1)$ is a word of length 3 in $\{0,1,2,3,4\}$. In this case $I^- = (4,0)$ and $I^f = (1)$. Moreover, if $J=(1,3)$, then $I_J = (4,1)$.
\end{example}
\begin{theorem} \label{thm: polynomial equations}
 For all $r,\ell \in \mathbb{Z}_{\geq 0}$, define 
    \begin{align} \label{eq: Q(r,l)}
        Q(r,\ell) \coloneqq\begin{cases}
            0 & \text{if } r = 0 \text{ or } \ell = 0\\
            2^{r-1} & \text{if } r\geq 1 \text{ and } \ell = 1 \\
            2^{r} - 1 & \text{if } r\geq 1 \text{ and } \ell > 1
        \end{cases}. 
    \end{align}
    If $\theta\in \Theta$ and $\p{X} \in \GRP_p(\drive)$ such that $d_p(\p{X}, \mathbf{0}) < \cN(\theta)$, 
    then for any $r\in \mathbb{Z}_{\geq 0}$, any word $I \in \cW([m]_0,q)$, and any $t\in [0,T]$,
    \begin{align} \label{eq: polynomial relation}
    \p{Y}_\theta(r)_{0,t}^{I} =  \sum_{J \in \cW([n]_0, Q(r,|I|))} \alpha_{r,J}^I(\theta) \p{X}_{0,t}^J,
    \end{align} 
    where 
    \begin{itemize}
        \item for all $r\geq 0$, $\alpha^{\emptyset}_{r,\emptyset} (\theta) = 1$;
        \item for all $I\in ([m]_0)^\ell$ with $1\leq \ell \leq q$, we have $\alpha^I_{0,\emptyset} (\theta) = 0$;
        \item for all $r\geq 1$, all $i\in [m]_0$, and all words $J \in \cW([n]_0,Q(r,1))$, 
        \begin{align} \label{eq: polynomials: level 1}
           \alpha^{(i)}_{r,J} (\theta)= \begin{cases}
            \sum_{\substack{K: K \in \{0,\ldots,m\}^\ell, \\ |J^-| \leq Q(r-1,\ell),\\ \ell \leq q'}} \theta_{i,J^f}^K \ \alpha^K_{r-1,J^-}(\theta) & \text{if $1 \leq |J| \leq Q(r-1,q)+1$}\\
            0 & \text{otherwise}
        \end{cases};
        \end{align}
        \item and for all $r\geq 1$, all words $I\in ([m]_0)^\ell$ with $2\leq \ell \leq q$, and all words $J \in \cW([n]_0, Q(r, |I|))$,
        \begin{align} \label{eq: polynomials: level 2 and higher}
        \alpha^I_{r,J}(\theta) = \begin{cases} 
        \sum_{\substack{(L,K): \\ (1,\ldots,|J|-1) \in L \shuffle K, \\ |L| \leq Q(r-1,|I|-1), \\ |K| \leq Q(r,1)-1}} \alpha^{I^-}_{r-1,J_L}(\theta)\ \alpha^{I^f} _{r,(J_K,J^f)} (\theta) & \text{if } 1\leq |J|\leq Q(r-1,|I|-1)+Q(r,1) \\
        0 & \text{otherwise} \end{cases}
        \end{align}
    \end{itemize}
    with
    \begin{align}
    \theta_{0,j}^I \coloneqq \begin{cases}
            1 &\text{if $j=0$ and $I=\emptyset$}\\
            0 & \text{otherwise}
        \end{cases}
    \end{align}
    for all $j\in [n]_0$ and all $I\in\cW([m]_0,q)$.
    Moreover, for all $r\in \mathbb{Z}_{\geq 0}$ and all words $I,J$, the function $\theta \mapsto \alpha^I_{r,J}(\theta)$ is a polynomial function of degree at most $Q(r,|I|)$.
    \end{theorem}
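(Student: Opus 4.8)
The plan is to establish \eqref{eq: polynomial relation}, the three recursions for the coefficients $\alpha^I_{r,J}$, and the degree bound simultaneously by a double induction: an outer induction on the Picard index $r$, and, within each step, an inner induction on the word length $|I|$, in which one first disposes of $I=\emptyset$, then the level-one words $I=(i)$, and finally the words of length $\geq 2$. The base case $r=0$ is immediate: since $\pp{Z}_\theta(0)=(\p{X},\bzero_\ssol)$ we have $\p{Y}_\theta(0)_{0,t}=\pp{Y}_\theta(0)^1+\bone=\bone$, so $\p{Y}_\theta(0)^\emptyset_{0,t}=1$ and $\p{Y}_\theta(0)^I_{0,t}=0$ for $|I|\geq 1$; since $Q(0,\ell)=0$ for all $\ell$, the sum in \eqref{eq: polynomial relation} runs only over $J=\emptyset$, and the stated initializations $\alpha^\emptyset_{0,\emptyset}=1$, $\alpha^I_{0,\emptyset}=0$ are polynomials of degree $0=Q(0,|I|)$.

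For the inductive step, fix $r\geq 1$ and assume the statement for $r-1$. The key structural input is the rough-path counterpart of the relation $d\p{Y}^k_t=\p{Y}^{k-1}_t\otimes d\p{Y}^1_t$ that motivated the definition \eqref{eq:pF_def} of the lifted vector field. By \Cref{prop: N(theta) and M(theta)} and the hypothesis $d_p(\p{X},\bzero)<\cN(\theta)$, every Picard iterate satisfies $\|\p{Y}_\theta(r')_{0,s}\|<\cM(\|\theta\|)$, so $F_{\theta,\cM(\|\theta\|)}$ agrees with the affine field $F_\theta$ and $\tens_{\cM(\|\theta\|)}$ agrees with $\tens$ along all iterates. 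Reading off the word-indexed coordinates of $\p{Y}_\theta(r)=\pp{Y}_\theta(r)^1+\bone$, where $\pp{Y}_\theta(r)=\pi_\ssol(\pp{Z}_\theta(r))$ and $\pp{Z}_\theta(r)=\int h_{\cM(\|\theta\|)}(\pp{Z}_\theta(r-1))\,d\pp{Z}_\theta(r-1)$, then gives $\p{Y}_\theta(r)^\emptyset_{0,t}=1$ (as the lifted vector field has no level-zero part),
\begin{align}
    \p{Y}_\theta(r)^{(i)}_{0,t} \;=\; \sum_{j=0}^{n}\int_0^t \langle \theta_{i,j},\,\p{Y}_\theta(r-1)_{0,s}\rangle\; d\p{X}^{(j)}_{0,s},
\end{align}
and, for $I=(I^-,I^f)$ with $2\leq|I|\leq q$,
\begin{align} \label{eq:proposal_level_k}
    \p{Y}_\theta(r)^{I}_{0,t} \;=\; \int_0^t \p{Y}_\theta(r-1)^{I^-}_{0,s}\; d\p{Y}_\theta(r)^{(I^f)}_{0,s}.
\end{align}
To justify these integral identities at the rough-path level, I would approximate $\p{X}$ in $p$-variation by canonical lifts of bounded-variation paths --- possible since $\p{X}$ is geometric, and, by the triangle inequality for $d_p$, the approximants may be taken inside the ball of radius $\cN(\theta)$ --- for which the rough integral reduces to the ordinary Young integral and the Picard scheme is the classical one; one then passes to the limit using continuity of the rough integration map (\Cref{thm:rough_integral}), of the Picard map, and of the signature functionals $\p{X}\mapsto\p{X}^J_{0,t}$.

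With the structural input in hand, the inductive step becomes bookkeeping with the shuffle product. For $|I|=1$, substitute the inductive hypothesis $\p{Y}_\theta(r-1)^K_{0,s}=\sum_{J'}\alpha^K_{r-1,J'}(\theta)\,\p{X}^{J'}_{0,s}$ (with $\p{Y}_\theta(r-1)^\emptyset=1$ absorbed as the $K=\emptyset$ term) into $\langle\theta_{i,j},\p{Y}_\theta(r-1)_{0,s}\rangle=\sum_K\theta_{i,j}^K\,\p{Y}_\theta(r-1)^K_{0,s}$, and use $\int_0^t\p{X}^{J'}_{0,s}\,d\p{X}^{(j)}_{0,s}=\p{X}^{(J',j)}_{0,t}$ for the geometric rough path $\p{X}$; collecting the coefficient of $\p{X}^J_{0,t}$ with $J=(J^-,J^f)$ yields \eqref{eq: polynomials: level 1}, the cutoff $1\leq|J|\leq Q(r-1,q)+1\leq Q(r,1)$ arising because $\alpha^K_{r-1,J^-}$ vanishes unless $|J^-|\leq Q(r-1,|K|)$. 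For $|I|\geq 2$, plug the already-established level-one formula for $\p{Y}_\theta(r)^{(I^f)}$ and the outer inductive hypothesis for $\p{Y}_\theta(r-1)^{I^-}$ into \eqref{eq:proposal_level_k}; the integrand becomes a linear combination of products $\p{X}^{J_1}_{0,s}\,\p{X}^{J_2}_{0,s}$, which the shuffle identity for geometric rough paths (the corollary following \Cref{def:grp}) expands as $\sum_{K'\in J_1\shuffle J_2}\p{X}^{K'}_{0,s}$; integrating against $d\p{X}^{(j)}_{0,s}$ appends the letter $j$, and reorganizing the resulting words as interleavings of subwords $J_L,J_K$ of a common word $J$, where $(L,K)$ ranges over ordered partitions of $\{1,\dots,|J|-1\}$, reproduces \eqref{eq: polynomials: level 2 and higher}, with the length bounds $|L|\leq Q(r-1,|I|-1)$ and $|K|\leq Q(r,1)-1$ coming from the supports of $\alpha^{I^-}_{r-1,\cdot}$ and $\alpha^{I^f}_{r,\cdot}$. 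Finally, $\deg\alpha^I_{r,J}\leq Q(r,|I|)$ follows from the recursions and the elementary inequalities $1+Q(r-1,q)\leq Q(r,1)$ and $Q(r-1,\ell-1)+Q(r,1)\leq Q(r,\ell)$ for $\ell\geq 2$.

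The main obstacle is the structural input, namely reducing the rough Picard iteration to the classical one along the trajectory: one must verify that the modifications $F_{\theta,M}$ and $\tens_M$ are invisible to all iterates (which is exactly what \Cref{prop: N(theta) and M(theta)} provides) and that the level-one and higher-level components of the rough integral obey the honest integral recursions \eqref{eq:proposal_level_k} --- cleanest to handle via approximation by bounded-variation driving paths and continuity, since for geometric rough paths the rough integral is the $p$-variation limit of Young integrals. Everything afterwards is combinatorial bookkeeping with the shuffle product and the function $Q(r,\ell)$.
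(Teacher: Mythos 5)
Your proposal is correct and follows essentially the same route as the paper: induction on the Picard index $r$ with an inner induction on $|I|$, the removal of the $M$-modification via \Cref{prop: N(theta) and M(theta)}, the shuffle identity to expand products of signature components, and approximation of $\p{X}$ by lifts of bounded-variation paths together with continuity of the Picard/integration maps. The only (immaterial) difference is the order of operations: the paper carries out the entire induction in the bounded $1$-variation setting and then passes to the $p$-variation limit of the final polynomial identity, whereas you propose to first justify the integral recursions at the rough-path level by approximation and then run the induction once.
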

    \begin{proof}
    We first prove \eqref{eq: polynomial relation} for bounded $1$-variation paths, where $p=1$. Note that in this case, the first and second bullet points are immediate. These bullet points prove \eqref{eq: polynomial relation} for the cases of $r=0$ as well as $r\geq 1$  and $I=\emptyset$. We will prove the remaining cases of~\eqref{eq: polynomial relation} by induction. We assume that for some $r\in \mathbb{N}$, \eqref{eq: polynomial relation} holds for all $(r',I')$ with $r' < r$ and  $I' \in \cW([m]_0,q)$. We prove the relations in~\eqref{eq: polynomials: level 1} for Picard iteration $r$ and a length 1 word $(i)$. 
    Note that since $d_p(\p{X},\mathbf{0}) \leq \cN(\theta)$, by \Cref{prop: N(theta) and M(theta)}, we have $\left\| \p{Y}_\theta(r)_{s,t}\right\| < \cM(\|\theta\|)$ for all $r\geq 0$ and all $(s,t)\in \Delta_T$. 
    Thus, without loss of generality, we will omit $\cM(\|\theta\|)$ from the notation for the vector fields, since $F_\theta(\bs) = F_{\theta, \cM(\|\theta\|)}(\bs)$ when $\|\bs\|< \cM(\|\theta\|)$. 
    So,
    \begin{align}
         \p{Y}_\theta(r)_{0,t}^{(i)} =& \int_0^t \sum_{j=0}^n \left[F_{\theta}(\p{Y}_\theta(r-1)_{0,u} )\right]_{i,j}dX_{0,u}^{(j)} \\
        =& \int_{0}^t \sum_{j=0}^n \langle \theta_{i,j}, \p{Y}_\theta(r-1)_{0,u}\rangle dX_{0,u}^{(j)} \\ 
        =& \int_{0}^t \sum_{j=0}^n \sum_{K \in \cW([m]_0, q)} \theta_{i,j}^K \ \p{Y}_\theta(r-1)_{0,u}^{K} \ dX_{0,u}^{(j)}  \\ 
        =& \sum_{j=0}^n \sum_{K \in \cW([m]_0, q)} \theta_{i,j}^K  \sum_{\tilde{J} \in \cW([n]_0, Q(r-1, |K|))} \alpha_{r-1,\tilde{J}}^K(\theta) \int_{0}^{t}  \p{X}_{0,u}^{\tilde{J}} \ dX_{0,u}^{(j)}   \\ 
        =& \sum_{j=0}^n \sum_{K \in \cW([m]_0,q)} \sum_{\tilde{J} \in \cW([n]_0, Q(r-1, |K|))}  \theta_{i,j}^K 
        \ \alpha_{r-1,\tilde{J}}^K(\theta) \  \p{X}_{0,t}^{(\tilde{J},j)}.
    \end{align}
    This proves \eqref{eq: polynomials: level 1}, and hence, proves \eqref{eq: polynomial relation} for $(r,(i))$. 
    Now, we consider the relations in~\eqref{eq: polynomials: level 2 and higher}, which will be proved by induction on the length of the word $I$. Let $|I|=\ell$ with $2\leq \ell \leq q$. We assume that \eqref{eq: polynomial relation} additionally holds at Picard iteration $r$ and for words $I'$ with $|I'|<\ell$. Then, 
    \begin{align}
        \p{Y}_\theta(r)_{0,t}^{I}  =& \int_0^t \p{Y}_\theta(r-1)^{I^-}_{0,u} \cdot\left( F_{\theta}(\p{Y}_\theta(r-1)_{0,u} )dX_{0,u}\right)_{I^f} \\
        =&\int_{0}^t \p{Y}_\theta(r-1)_{0,u}^{I^-} \ d\p{Y}_{\theta}(r)_{0,u}^{I^f}  \\ 
        =& \int_0^t \left(\sum_{L \in \cW([n]_0, Q(r-1, |I|-1))} \alpha_{r-1, L}^{I^-}(\theta) \p{X}_{0,u}^L \right)  \left(\sum_{\substack{K \in \cW([n]_0, Q(r,1)), \\ |K|\geq 1}}\alpha_{r, K}^{I^f}(\theta) d \p{X}_{0,u}^K \right)  \\ 
        =& \int_0^t \sum_{L \in \cW([n]_0, Q(r-1, |I|-1))} \sum_{\substack{K \in \cW([n]_0, Q(r,1)), \\ |K|\geq 1}} \alpha_{r-1,L}^{I^-}(\theta) \ \alpha_{r,K}^{I^f}(\theta) \ \p{X}_{0,u}^L \ \p{X}_{0,u}^{K^-} \ d\p{X}_{0,u}^{K^f} \\ 
        =& \sum_{L \in \cW([n]_0, Q(r-1, |I|-1))} \sum_{\substack{K \in \cW([n]_0, Q(r,1)), \\ |K|\geq 1}}  \sum_{\tilde{J} \in L \shuffle K^-} \alpha_{r-1,L}^{I^-}(\theta) \ \alpha_{r,K}^{I^f}(\theta) \int_{0}^t \p{X}_{0,u}^{\tilde{J}} \ d\p{X}_{0,u}^{K^f}  \\ 
        =& \sum_{L \in \cW([n]_0, Q(r-1, |I|-1))} \sum_{\substack{K \in \cW([n]_0, Q(r,1)), \\ |K|\geq 1}}  \sum_{\tilde{J} \in L \shuffle K^-} \alpha_{r-1,L}^{I^-}(\theta) \ \alpha_{r,K}^{I^f}(\theta) \ \p{X}_{0,t}^{(\tilde{J},K^f)}.  
    \end{align}
    This proves \eqref{eq: polynomials: level 2 and higher}, and hence, proves \eqref{eq: polynomial relation} for $(r,I)$. This concludes the proof of \eqref{eq: polynomial relation} in the case of $p=1$. \medskip 
    
    Now suppose that $p>1$. Because $\p{X}\in \GRP_p(\drive)$, there exists a sequence $\{\p{X}(k)\}_{k\in \mathbb{N}}$ of extensions of bounded $1$-variation paths such that $d_p(\p{X}(k),\p{X}) \to 0$ as $k\to \infty$. For all $k\in \mathbb{N}$, let $\{\pp{Z}_{\theta}(k,r)\}_{r\in \mathbb{Z}_{\geq 0}}$ be the sequence of the $\cM(\|\theta\|)$-Picard iterations of the path-dependent differential equation 
    \begin{align}
    dY_t = F_{\theta,\cM(\|\theta\|)}(\p{Y}_t) \ d\p{X}(k)_t,
    \end{align}
    and define $\pp{Y}_\theta(k,r) \coloneqq \pi_{\ssol} (\pp{Z}_\theta(k,r))$ and $\p{Y}_{\theta} (k,r) \coloneqq \pp{Y}_\theta(k,r)^1 + \bone$ for all $r \in \mathbb{Z}_{\geq 0}$. Because $d_p(\p{X},\mathbf{0}) < \cN(\theta)$, there exists $N_0\in \mathbb{N}$ such that for all $k\geq N_0$, we have $d_p(\p{X}(k), \mathbf{0})< \cN(\theta)$. Therefore, since \eqref{eq: polynomial relation} has already been proved for the case of $p=1$, we have 
    \begin{align} \label{eq: polynomial relations for the sequence of bounded variation paths}
    \p{Y}_{\theta}(k,r)_{0,t}^{I} =  \sum_{J \in \cW([n]_0, Q(r,|I|))} \alpha_{r,J}^I(\theta) \p{X}(k)_{0,t}^J
    \end{align}
    for all $k\geq N_0$, all $r\in \mathbb{Z}_{\geq 0}$, all words $I \in \cW([m]_0,q)$, and all $t\in [0,T]$.  Because the functions $ \pp{Z} \mapsto \int h_{\cM(\|\theta\|)}(\pp{Z}) d\pp{Z}$ and $\pp{Z} \mapsto \pi_{\ssol}(\pp{Z})$ are both continuous on $\GRP_p(\drive\oplus \ssol)$, we have that $\lim_{k\to \infty} d_p\left(\pp{Y}_{\theta}(k,r), \pp{Y}_\theta(r) \right)=0$, and so, $\lim_{k\to \infty} \p{Y}_\theta(k,r)_{0,t}^I = \p{Y}_\theta(r)_{0,t}^I$ for all $r\in \mathbb{Z}_{\geq 0}$, all $I\in \cW([m]_0,q)$, and all $t\in [0,T]$. Hence, by taking the limits of the both sides of \eqref{eq: polynomial relations for the sequence of bounded variation paths} as $k\to \infty$, the proof of \eqref{eq: polynomial relation} is concluded for the case of $p>1$.\medskip

    Note that the four bullet points recursively define $\alpha_{r,J}^I$ for all $r\geq 0$, all words $I\in \cW([m]_0,q)$, and all words $J \in \cW([n]_0, Q(r, |I|))$. This recursive definition confirms that $\theta \mapsto \alpha_{r,J}^I(\theta)$ is a polynomial of degree at most $Q(r,|I|)$.
    \end{proof}

\begin{remark} \label{remark: comparison of polynomials}
Due to the fact that our paths $\p{Y}_\theta(r)$ are the level 1 components of the Picard iterations $\pp{Y}_\theta(r)$ of the lifted equation (see~\Cref{thm: universal limit theorem 2}), they differ from the Picard iterations used in~\cite{papavasiliou_parameter_2011}. As a result, for any $I\in \cW([m]_0,q)$ and any $r\in \mathbb{Z}_{\geq 0}$ in~\eqref{eq: polynomial relation}, we are able to obtain smaller polynomials both in terms of the maximum degrees of the polynomials $\alpha_{r,J}^I$ and the maximum length of words $J$ appearing on the right-hand side of~\eqref{eq: polynomial relation}. In~\cite{papavasiliou_parameter_2011},
these quantities are $|I|q^r$ and $|I| \sum_{i=0}^{r-1}q^i$  respectively, where $q$ is the maximum degree of $a_\theta$ and $b_\theta$ in \eqref{eq:path_ind_sde}. In our case, they are both equal to $Q(r,|I|) = O(2^r)$, as defined in~\eqref{eq: Q(r,l)}. Note that smaller polynomials improve the overall efficiency of the estimation method, and while $Q(r,|I|)$ grows exponentially with $r$, we prove in \Cref{thm:consistency} that the rate of convergence of our estimator is also exponential in $r$.
\end{remark}
    
\subsection{Empirical Estimator}
In the previous section, we considered the path-dependent differential equation~\eqref{eq: parameterized SDE} with a given $\theta \in \Theta$ for a fixed sample of the driving rough path $\p{X} \in \GRP_p(\drive)$ such that $d_p(\p{X}, \bzero) < \cN(\|\theta\|)$. In particular, we obtained polynomial expressions in $\theta$ for the Picard iterations, whenever the driving signal is sufficiently bounded. In this section, we consider the case where some components of the true parameter $\p{\theta_0} \in \Theta$, which determines the vector field, are unknown, and the differential equation is driven by stochastic rough paths $\p{X}$. We introduce the \textit{Expected Signature Matching Method} for estimating the unknown components from the observed trajectories of the solution. 
\medskip

First, we will fix a decomposition of the parameter space into known and unknown components, $\Theta = \known{\Theta} \oplus \unknown{\Theta}$,
where $\known{\Theta} \label{pg:known_unknown}$ is the subspace of \emph{known} parameters, while $\unknown{\Theta}$ is the subspace of \emph{unknown} parameters. We note that in general, $\dim \Theta = m(n+1)\dim \ssol > \dim \ssol$, so the system of polynomials in $\theta \in \Theta$ defined by the collection of all words $I \in \cW([m]_0, q)$ would yield an underdetermined system. Thus, we will make the assumption that for the unknown subspace\label{pg:d},
\begin{align}
    d \coloneqq \dim \unknown{\Theta} \leq \dim\ssol.
\end{align}
We will consider a vector field parametrized by the \emph{true parameter}\label{pg:decomposition of the true parameter} 
\begin{align} 
    \p{\theta_0} = (\trueknown, \theta_0) \in \known{\Theta} \oplus \unknown{\Theta}.
\end{align}
Thus, the known true parameter $\trueknown \in \known{\Theta}$ will always be fixed. We will use the notation
\begin{align}
    \p{\theta} = (\trueknown, \theta) \in \Theta
\end{align}
to denote the full parameter set for some $\theta \in \unknown{\Theta}$. 
Now pick $\xi\in \mathbb{R}_+ \label{pg:xi}$,  
and define
\begin{align}
    \Theta_\xi \coloneqq \{\theta \in \unknown{\Theta} \, : \,  \cN(\|\p{\theta}\|) \geq \xi\} \andd 
    E_\xi \coloneqq \{\sample \in \samplespace \, : \,  d_p(\p{X}(\sample) , \mathbf{0}) < \xi \}. 
\end{align}
Note that for every $\theta \in \Theta_\xi \label{pg:allowable}$ and $\sample \in E_\xi$, we have $d_p(\p{X}(\sample), \bzero) < \xi \leq \cN(\|\p\theta\|)$,
and thus, by \Cref{thm: polynomial equations}, for all $r\in \mathbb{Z}_{\geq 0}$ and all words $I\in \cW([m]_0, q)$, there exists a polynomial function $\theta \mapsto P^I_r(\theta)$ such that for all $\theta \in \Theta_\xi$, we have 
\begin{align} \label{eq:picard_expsig_polys}
    P^I_{r}(\theta) = \mathbb{E} \left[ \p{Y}_{\p\theta}(r)_{0,T}^{I} \cdot \chi_{E_\xi} \right].
\end{align}
Note that the coefficients of $P^I_{r}$ are determined by the expected signature $\mathbb{E}\left[\p{X}^J \cdot \chi_{E_\xi}\right]$ for words $J$ in a (possibly  proper) subset of $\cW([n]_0, Q(r,|I|))$. \medskip

Suppose we observe $N$ trajectories $\{\p{Y}_{\p{\theta_0}}(\sample_i)\}_{i=1}^N$, $\sample_i \in \samplespace$, of the $\cM(\|\p{\theta_0}\|)$-solution to the path-dependent SDE
\begin{align}
    dY_t = F_{\p{\theta_0}, \cM(\|\p{\theta_0}\|)}(\p{Y}_t) d\p{X}_t.
\end{align}
Since the dimension of the space of unknown parameters is $\dim\Theta_2 = d$, we choose a set of words $\{I_1, \ldots, I_d\} \subset \cW([m]_0, q)$. 
The \textit{Expected Signature Matching Method (ESMM)} finds an estimate for $\theta_0$ by solving the polynomial system of equations 
\begin{align}\label{eq: expected signature matching method}
    \left\{P^{I_k}_{r}(\theta) = \frac{1}{N}\sum_{i=1}^N \p{Y}_{\p{\theta_0}}(\sample_i)_{0,T}^{I_k} \cdot \chi_{E_\xi}(\sample_i), \ \  k = 1, \ldots, d\right\}
\end{align}
for some $r \in \N$.

\begin{remark}
   To ensure the consistency of the ESMM, we need to pick $\xi$ so that $\theta_0 \in \Theta_\xi^\circ$. To do so, we fix $\mu > 0$ such that $\|\p{\theta_0}\| < \mu$, and set $\xi \coloneqq \cN(\mu)$. Note that larger values of $\|\p{\theta_0}\|$ require larger values of $\mu$, which in turn, lead to smaller values of $\xi$, thereby shrinking the set $E_\xi$.
\end{remark}

\subsection{Consistency}
This section justifies the use of ESMM by proving its consistency under certain constraints on the function 
\begin{align} \label{eq:P_def}
    P: \Theta_\xi \to \R^d \quad \text{defined by} \quad \theta \mapsto (P^{I_1}(\theta), \ldots, P^{I_d}(\theta))
\end{align}
where for any word $I \in \cW([m]_0,q)$, the function $P^{I} : \Theta_\xi \to \mathbb{R}$ is defined with
\begin{align}
    P^{I}(\theta) \coloneqq \mathbb{E} \left[\left(\p{Y}_{\p\theta}\right)_{0,T}^I \cdot \chi_{E_\xi} \right].
\end{align}
In particular, we will prove in~\Cref{thm:consistency} that when $P$ is differentiable at $\theta_0$ with an invertible Jacobian, then ESMM is consistent: for sufficiently high Picard iterations $r \in \N$ and with suffiently many samples $N$, the system \eqref{eq: expected signature matching method} admits a solution arbitrarily close to $\theta_0$ almost surely.
We begin by showing that the expected signature of the solution is continuous in $\theta$.
\begin{lemma}\label{cor: uniform convergence}
For any $I \in \cW([m],q)$, the sequence of functions $\{P^I_r\}_{r\in \mathbb{Z}_{\geq 0}}$, defined in~\eqref{eq:picard_expsig_polys}, converges uniformly  on $\Theta_\xi$ to the function $P^I$, defined in \eqref{eq:P_def}, as $r \to \infty$. Hence, $P^I$ is continuous on $\Theta_\xi$.    
\end{lemma}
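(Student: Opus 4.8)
The plan is to prove uniform convergence $P^I_r \to P^I$ on $\Theta_\xi$ by exploiting the exponential Picard contraction estimate from~\Cref{prop: N(theta) and M(theta)}, which gives a geometric bound on consecutive Picard iterates that is uniform in $\theta$ once the driving signal is suitably bounded. Recall that for $\theta \in \Theta_\xi$ and $\sample \in E_\xi$, we have $d_p(\p{X}(\sample),\bzero) < \xi \leq \cN(\|\p\theta\|)$, so the hypotheses of~\Cref{prop: N(theta) and M(theta)} are met along the sample paths contributing to the expectation, and $\cM(\|\p\theta\|)$ is bounded on $\Theta_\xi$ since $\cM$ is non-increasing and $\|\p\theta\|$ is bounded on the set where $\cN(\|\p\theta\|)\geq\xi$ (again using that $\cN$ is non-increasing).

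First I would fix $\rho > 1$ and apply part of~\Cref{prop: N(theta) and M(theta)} to obtain, for each $\theta \in \Theta_\xi$ and each $\sample \in E_\xi$, the bound
\begin{align}
    \left\| \p{Y}_{\p\theta}(r)_{0,T} - \p{Y}_{\p\theta}(r+1)_{0,T} \right\| \leq 2\rho^{-r}\, \frac{C^{1/p}\, d_p(\p{X}(\sample),\bzero)}{\beta(1/p)!} \leq 2\rho^{-r}\, \frac{C^{1/p}\,\xi}{\beta(1/p)!}.
\end{align}
Summing the geometric series over $r' \geq r$ shows $\{\p{Y}_{\p\theta}(r)_{0,T}\}_r$ is Cauchy in $\ssol$ with a rate $O(\rho^{-r})$ that is uniform in both $\theta \in \Theta_\xi$ and $\sample \in E_\xi$; by part (3) of~\Cref{thm: universal limit theorem 2} the limit is $\p{Y}_{\p\theta,0,T}$ (restricted to the appropriate component). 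Passing to the fixed coordinate $I$ and taking absolute values, then integrating over $E_\xi$ against $\mathcal P$ (which has finite mass on $E_\xi$, or one simply notes $\chi_{E_\xi}\leq 1$), I get
\begin{align}
    \left| P^I_r(\theta) - P^I(\theta) \right| = \left| \E\!\left[ \left(\p{Y}_{\p\theta}(r)_{0,T}^I - (\p{Y}_{\p\theta})_{0,T}^I\right)\chi_{E_\xi}\right] \right| \leq \sum_{r' \geq r} 2\rho^{-r'}\,\frac{C^{1/p}\xi}{\beta(1/p)!} \xrightarrow[r\to\infty]{} 0,
\end{align}
with the right-hand side independent of $\theta$. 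This is exactly uniform convergence on $\Theta_\xi$. Finally, each $P^I_r$ is a polynomial in $\theta$ by~\Cref{thm: polynomial equations} (via~\eqref{eq:picard_expsig_polys}), hence continuous, and a uniform limit of continuous functions is continuous, giving continuity of $P^I$.

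The main obstacle is bookkeeping rather than conceptual: I must be careful that the Picard estimate in~\Cref{prop: N(theta) and M(theta)} is applied with the \emph{same} $\cM(\|\p\theta\|)$ that defines the observed solution $\p{Y}_{\p\theta}$, and that the bound genuinely holds uniformly over $\Theta_\xi$ — this is where the monotonicity of $\cN$ and $\cM$ is essential, since it converts the pointwise hypothesis $d_p(\p{X}(\sample),\bzero)<\cN(\|\p\theta\|)$ into the $\theta$-free bound $\xi$. A secondary point is justifying the interchange of limit and expectation, which is immediate from the uniform (in $\sample$) geometric bound above together with $\chi_{E_\xi}$ being bounded; dominated convergence applies with the constant dominating function $\sup_r \|\p{Y}_{\p\theta}(r)_{0,T}\| \leq \cM(\|\p\theta\|)$ from~\Cref{prop: N(theta) and M(theta)}.
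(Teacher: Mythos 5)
Your proposal is correct and follows essentially the same route as the paper: the uniform-in-$\theta$ and uniform-in-$\sample$ geometric Picard bound from \Cref{prop: N(theta) and M(theta)} (with $d_p(\p{X}(\sample),\bzero)<\xi\leq\cN(\|\p\theta\|)$ on $E_\xi\times\Theta_\xi$), summed over $r'\geq r$ and passed through the expectation, followed by continuity of the polynomial approximants. The paper's proof is the same computation, quoting \Cref{thm: universal limit theorem 2} for the already-summed bound $\frac{2\rho^{-r}}{(1-\rho^{-1})\beta(1/p)!}C^{1/p}\xi$ rather than summing the series explicitly.
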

\begin{proof}
The proof is given in \Cref{appendix: proofs}.
\end{proof}

We will now move on to the main results on consistency. The primary tool we will use in our proof is a variant~\cite{miranda2002} of Miranda's theorem, which is a generalization of the classical intermediate value theorem. We begin with several required definitions. 
\begin{definition}\cite[Definition 2.1]{miranda2002}
        For all $\delta>0$,
        define 
        \begin{align}
            \cube(\delta) \coloneqq [-\delta, \delta]^d \subset \R^d \andd \cube_k(\delta)^{\pm} \coloneqq \{ (x_1, \ldots, x_d) \in \cube(\delta) \, : \, x_k = \pm \delta\}
        \end{align}
        for all $k \in [d]$.
    \begin{itemize}
        \item A set $D\subset \mathbb{R}^d$ is called a \textit{Miranda domain} if there exists a surjective continuous map $g:\cube(1) \to D$ such that  $g(\partial \cube(1) ) = \partial D$, where $\partial$ denotes the boundary of a set. In this case, $g$ is called a \textit{Miranda mapping} for $D$. 
        \item For a Miranda domain $D$, let $\mathcal{D} = \{D_1^+, D_1^-, \cdots, D_d^+, D_d^-\}$ be a set of subsets of $\partial D$. The set $\mathcal{D}$ is called a \textit{Miranda partition} of $\partial D$ if there exists a Miranda mapping $g: \cube(1) \to D$ such that for all $k\in [d]$,
        \begin{align}
            g(\cube_k(1)^+) = D_k^+ \text{ and } g(\cube_k(1)^-) = D_k^-.
        \end{align}
        To simplify terminology, we will often call the pair $(D, \cD)$ a Miranda domain. 
        \item
        For a Miranda domain $(D,\cD)$,
        a continuous mapping $f = (f_1, \ldots,f_d): D \to \mathbb{R}^d$ is said to satisfy the \textit{Miranda conditions} on $(D, \mathcal{D})$ if 
        \begin{align}
            f_k(\theta_1) f_k (\theta_2) \leq 0\ \  \text{ for all } \theta_1 \in D_k^+, \text{ all } \theta_2\in D_k^-, \text{ and all }k\in [d].
        \end{align}
    \end{itemize}
\end{definition}

We will begin by showing consistency when we assume the existence of Miranda domains and partitions such that the corresponding Miranda conditions hold for $P- P(\theta_0)$ with strict inequalities. In particular, we show that for sufficiently large $r$ and $N$, the Miranda conditions also hold for the error of the polynomial system in~\eqref{eq: expected signature matching method} on the same Miranda domains.

\begin{lemma}\label{lem:solution_given_miranda_domain}
Suppose $(D, \cD)$ is a Miranda domain such that $ D \subset \Theta_\xi$, and
\begin{align}\label{eq: Miranda condition}
    \left(P^{I_k}(\theta_1) - P^{I_k}(\theta_0)\right) \left(P^{I_k}(\theta_2) - P^{I_k}(\theta_0)\right) < 0 \ \text{ for all } \theta_1 \in D_k^+, \text{ all }\theta_2\in D_k^-, \text{ and all }k\in [d].
\end{align}
Then almost surely, there exist $N_0,r_0\in \mathbb{N}$ such that for all $N\geq N_0$ and all $r\geq r_0$, the polynomial system \eqref{eq: expected signature matching method} has a solution $\theta_{r,N}$ in $D$.
\end{lemma}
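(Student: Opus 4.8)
The plan is to recast the polynomial system~\eqref{eq: expected signature matching method} as a root-finding problem on $D$ and then invoke the variant of Miranda's theorem from~\cite{miranda2002}. For $r,N\in\N$, define the residual map $\Phi_{r,N}=(\Phi^1_{r,N},\ldots,\Phi^d_{r,N}):D\to\R^d$ by
\[
    \Phi^k_{r,N}(\theta) \coloneqq P^{I_k}_{r}(\theta) - \frac{1}{N}\sum_{i=1}^N \p{Y}_{\p{\theta_0}}(\sample_i)_{0,T}^{I_k}\cdot \chi_{E_\xi}(\sample_i),
\]
so that $\theta$ solves~\eqref{eq: expected signature matching method} exactly when $\Phi_{r,N}(\theta)=\mathbf 0$. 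Each $\Phi^k_{r,N}$ is continuous on $D$, since $P^{I_k}_r$ is a (deterministic) polynomial in $\theta$ by~\Cref{thm: polynomial equations} and the empirical average is a random constant. Since $(D,\cD)$ is a Miranda domain with $D\subset\Theta_\xi$, it suffices to show that, almost surely, $\Phi_{r,N}$ satisfies the Miranda conditions on $(D,\cD)$ for all sufficiently large $r$ and $N$.

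To this end I would use the decomposition, valid for $\theta\in D\subset\Theta_\xi$ and each $k\in[d]$,
\[
    \Phi^k_{r,N}(\theta) = \underbrace{\bigl(P^{I_k}_r(\theta) - P^{I_k}(\theta)\bigr)}_{\text{(i)}} + \underbrace{\bigl(P^{I_k}(\theta) - P^{I_k}(\theta_0)\bigr)}_{\text{(ii)}} + \underbrace{\Bigl(P^{I_k}(\theta_0) - \tfrac1N\textstyle\sum_{i=1}^N \p{Y}_{\p{\theta_0}}(\sample_i)_{0,T}^{I_k}\chi_{E_\xi}(\sample_i)\Bigr)}_{\text{(iii)}}.
\]
Term (i) converges to $0$ uniformly over $\theta\in\Theta_\xi$ as $r\to\infty$, by~\Cref{cor: uniform convergence}. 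Term (iii) is independent of $\theta$; moreover, on $E_\xi$ the random variables $\p{Y}_{\p{\theta_0}}(\sample)_{0,T}^{I_k}\chi_{E_\xi}(\sample)$ are bounded (by~\Cref{prop: N(theta) and M(theta)}, using $d_p(\p{X}(\sample),\mathbf 0)<\xi\leq\cN(\|\p{\theta_0}\|)$ on $E_\xi$) with expectation $P^{I_k}(\theta_0)$, so the strong law of large numbers shows term (iii)${}\to 0$ almost surely as $N\to\infty$. Hence, on an almost sure event, $\sup_{\theta\in D}\max_{k\in[d]}\bigl|\Phi^k_{r,N}(\theta)-\bigl(P^{I_k}(\theta)-P^{I_k}(\theta_0)\bigr)\bigr|\to 0$ as $r,N\to\infty$.

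Next I would extract a uniform sign gap from the strict hypothesis~\eqref{eq: Miranda condition}. Since $D$ is the continuous image of the compact cube $\cube(1)$, it is compact, and so $\partial D$ and each face $D_k^{\pm}=g(\cube_k(1)^{\pm})$ are compact; write $g_k\coloneqq P^{I_k}-P^{I_k}(\theta_0)$, which is continuous by~\Cref{cor: uniform convergence}. The inequality~\eqref{eq: Miranda condition} forces $g_k$ to be nonvanishing on $D_k^+\cup D_k^-$ and — as a purely formal consequence of the product being negative for \emph{every} pair $(\theta_1,\theta_2)\in D_k^+\times D_k^-$ — to have one constant sign on $D_k^+$ and the opposite constant sign on $D_k^-$. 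By compactness there is $\eta>0$ with $|g_k|\geq\eta$ on $D_k^+\cup D_k^-$ for all $k$, with opposite signs on the two faces. Choosing $r_0,N_0\in\N$ so that, on the almost sure event above, the supremum from the previous paragraph is $<\eta$ for all $r\geq r_0$ and $N\geq N_0$, we get that for such $r,N$ the coordinate $\Phi^k_{r,N}$ has the same sign as $g_k$ on $D_k^{\pm}$, so $\Phi^k_{r,N}(\theta_1)\Phi^k_{r,N}(\theta_2)<0$ for all $\theta_1\in D_k^+$, $\theta_2\in D_k^-$, $k\in[d]$. Thus $\Phi_{r,N}$ satisfies the Miranda conditions on $(D,\cD)$, and the variant of Miranda's theorem~\cite{miranda2002} furnishes a zero $\theta_{r,N}\in D$ of $\Phi_{r,N}$, which is precisely a solution of~\eqref{eq: expected signature matching method}.

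The main obstacle I anticipate is organizing the two unrelated limiting regimes: the Picard index $r$ is controlled uniformly in $\theta$ via~\Cref{cor: uniform convergence}, whereas the sample size $N$ gives only an almost sure limit — fortunately $\theta$-independent — and these must be combined on a single almost sure event and weighed against the fixed gap $\eta$ coming from compactness. A subsidiary point that needs care is the claim that $g_k$ has constant sign on each face $D_k^{\pm}$: this is not a connectedness statement but follows directly from the negativity of all the products $g_k(\theta_1)g_k(\theta_2)$, and it is exactly what allows a single $\eta$ to work uniformly over the faces.
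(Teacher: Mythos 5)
Your proposal is correct and follows essentially the same route as the paper's proof: decompose the residual into the Picard-approximation error (controlled uniformly via \Cref{cor: uniform convergence}), the deterministic gap $P^{I_k}(\theta)-P^{I_k}(\theta_0)$ (bounded below on the compact faces), and the empirical-mean error (handled by the strong law of large numbers), then verify the Miranda conditions and apply the theorem of~\cite{miranda2002}. The only differences are cosmetic — the paper normalizes the signs by a WLOG and uses a per-coordinate gap $\eta_k$ rather than a single $\eta$.
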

\begin{proof}
The proof is given in \Cref{appendix: proofs}.
\end{proof}

Next, we will show that if $P$ is differentiable at $\theta_0$ with an invertible Jacobian, then arbitrary small neighborhoods of $\theta_0$ contain Miranda domains with the properties described in \Cref{lem:solution_given_miranda_domain}.
\begin{lemma}\label{lem:existence_of_miranda}
Suppose $P$ is differentiable at $\theta_0$ with an invertible Jacobian. Then, there exists $\varepsilon_0> 0$ such that for all $\varepsilon \in (0, \varepsilon_0]$, there exists a Miranda domain $(D, \cD)$ where  
$D \subset B_\varepsilon(\theta_0)$ and~\eqref{eq: Miranda condition} holds. The constant $\varepsilon_0$ merely depends on the choice of words $\{I_1,\ldots,I_d\}$, the expected signature of the driving signal $\p{X}$, and the point $\theta_0$.
\end{lemma}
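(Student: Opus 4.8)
The plan is to build the Miranda domain $D$ as the image of a small cube under an affine map associated to the Jacobian $\J P(\theta_0)$, and then use differentiability to transfer the sign conditions from the linear model to $P - P(\theta_0)$ itself. Write $L \coloneqq \J P(\theta_0) \in \Mat_{d,d}(\R)$, which is invertible by hypothesis, and let $Q \coloneqq P - P(\theta_0)$, so that $Q(\theta_0) = 0$, $Q$ is differentiable at $\theta_0$, and $\J Q(\theta_0) = L$. By differentiability, for every $\eta > 0$ there is $\delta_\eta > 0$ such that $\|Q(\theta_0 + h) - Lh\| \leq \eta \|h\|$ whenever $\|h\| \leq \delta_\eta$. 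Since $L$ is invertible, there is a constant $c > 0$ with $\|Lh\| \geq c\|h\|$ for all $h$; more precisely, each coordinate functional $h \mapsto (Lh)_k$ is a nonzero linear functional, so we can choose vectors $v_1, \ldots, v_d$ (e.g.\ suitable scalar multiples of the rows of $L^{-\top}$, or simply $L^{-1}e_k$) such that $(Lv_k)_k = +1$ while we control the other coordinates — the cleanest choice is to parametrize the cube in the $L$-coordinates directly.

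Concretely, I would define, for $\varepsilon > 0$ small, the linear map $g_\varepsilon : \cube(1) \to \R^d$ by $g_\varepsilon(x) \coloneqq \theta_0 + s_\varepsilon L^{-1} x$, where $s_\varepsilon > 0$ is a scaling factor chosen so that $D \coloneqq g_\varepsilon(\cube(1)) \subset B_\varepsilon(\theta_0)$; since $\|L^{-1} x\| \leq \|L^{-1}\| \sqrt{d}$ on $\cube(1)$, it suffices to take $s_\varepsilon = \varepsilon / (\|L^{-1}\|\sqrt{d})$. The map $g_\varepsilon$ is an affine homeomorphism onto its image, hence surjective onto $D$, continuous, and maps $\partial \cube(1)$ onto $\partial D$; the faces $\cube_k(1)^{\pm}$ map to the faces $D_k^{\pm} \coloneqq g_\varepsilon(\cube_k(1)^{\pm})$, so $(D, \cD)$ with $\cD = \{D_1^+, D_1^-, \ldots, D_d^+, D_d^-\}$ is a Miranda domain with Miranda mapping $g_\varepsilon$. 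The point of using $L^{-1}$ in the parametrization is that for $\theta = g_\varepsilon(x) = \theta_0 + s_\varepsilon L^{-1} x$, the linear approximation to $Q^{I_k}$ is $(L \cdot s_\varepsilon L^{-1} x)_k = s_\varepsilon x_k$, which equals $\pm s_\varepsilon$ on $D_k^{\pm}$. Thus on $D_k^+$ the linear model of $P^{I_k} - P^{I_k}(\theta_0)$ is exactly $+ s_\varepsilon$, and on $D_k^-$ it is exactly $- s_\varepsilon$.

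It remains to upgrade this to the genuine inequality \eqref{eq: Miranda condition}. For $\theta = \theta_0 + h \in D$ with $h = s_\varepsilon L^{-1} x$, we have $\|h\| \leq s_\varepsilon \|L^{-1}\| \sqrt d \leq \varepsilon$, and
\begin{align}
    \left| P^{I_k}(\theta) - P^{I_k}(\theta_0) - s_\varepsilon x_k \right| = \left| Q^{I_k}(\theta_0 + h) - (Lh)_k \right| \leq \|Q(\theta_0 + h) - Lh\| \leq \eta \|h\| \leq \eta\, s_\varepsilon \|L^{-1}\|\sqrt d,
\end{align}
valid once $\varepsilon \leq \delta_\eta$. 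Choosing $\eta$ small enough that $\eta \|L^{-1}\|\sqrt d < 1$ — for instance $\eta = \tfrac{1}{2}(\|L^{-1}\|\sqrt d)^{-1}$ — gives that on $D_k^+$ (where $x_k = 1$) we have $P^{I_k}(\theta) - P^{I_k}(\theta_0) \geq s_\varepsilon - \tfrac{1}{2} s_\varepsilon = \tfrac{1}{2} s_\varepsilon > 0$, and on $D_k^-$ (where $x_k = -1$) we have $P^{I_k}(\theta) - P^{I_k}(\theta_0) \leq -\tfrac12 s_\varepsilon < 0$. Hence the product in \eqref{eq: Miranda condition} is strictly negative for all $\theta_1 \in D_k^+$, $\theta_2 \in D_k^-$, and all $k$. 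Setting $\varepsilon_0 \coloneqq \delta_\eta$ for this fixed $\eta$ — which depends only on $L = \J P(\theta_0)$, hence only on $\theta_0$, the words $\{I_1, \ldots, I_d\}$, and (through $P$) the expected signature of $\p{X}$ — the construction works for every $\varepsilon \in (0, \varepsilon_0]$ by rescaling $s_\varepsilon$ accordingly, and one checks $D \subset B_\varepsilon(\theta_0)$ by the norm bound above.

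The only subtlety — and the step I expect to require the most care — is verifying that $g_\varepsilon$ genuinely satisfies the topological requirements in the definition of a Miranda mapping, namely surjectivity onto $D$, $g_\varepsilon(\partial\cube(1)) = \partial D$, and the face correspondence $g_\varepsilon(\cube_k(1)^{\pm}) = D_k^{\pm}$. Since $g_\varepsilon$ is an affine bijection from $\cube(1)$ onto the parallelepiped $D$, it is a homeomorphism, it carries boundary to boundary and relative interiors of faces to relative interiors of faces, so all three conditions hold; the $D_k^{\pm}$ are precisely the images of the $2d$ facets of the cube. One should also note that $D$ has nonempty interior (as $L^{-1}$ is invertible), so $D \subset \Theta_\xi$ holds provided $\varepsilon$ is small enough that $B_\varepsilon(\theta_0) \subset \Theta_\xi$, which is possible since $\theta_0 \in \Theta_\xi^\circ$ by the choice of $\xi$ in the preceding remark; shrinking $\varepsilon_0$ once more if necessary secures this. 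This completes the construction, and \Cref{lem:solution_given_miranda_domain} then applies to each such $(D, \cD)$.
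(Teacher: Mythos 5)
Your proposal is correct and is essentially the paper's own argument: both precompose $P$ with the inverse Jacobian so that the linearization on the image of a cube reduces to (a multiple of) the identity, then use differentiability at $\theta_0$ to absorb the remainder into half the linear term, yielding the strict sign conditions on the faces $D_k^{\pm}$; the paper works with $\cube(\delta)$ and the $\|\cdot\|_\infty$ norm where you work with a rescaled $\cube(1)$ and the Euclidean norm. The only nitpick is that your choice $s_\varepsilon = \varepsilon/(\|L^{-1}\|\sqrt d)$ gives $\|h\|\le\varepsilon$ rather than $\|h\|<\varepsilon$, so to land strictly inside the open ball $B_\varepsilon(\theta_0)$ you should shrink $s_\varepsilon$ by a factor (the paper's $\delta$ has an extra $\tfrac12$ for exactly this reason).
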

\begin{proof}
    The proof is given in \Cref{appendix: proofs}.
\end{proof}

Now, by putting together \Cref{lem:solution_given_miranda_domain} and \Cref{lem:existence_of_miranda}, in addition to the explicit rates from~\eqref{eq: rate of uniform convergence}, we obtain our main consistency result. 

\begin{theorem} \label{thm:consistency}
    Suppose $P$ is differentiable at $\theta_0 \in \Theta_\xi$ with an invertible Jacobian at this point. Then almost surely, for all $\varepsilon > 0$, there exist $N_0, r_0 \in \N$ such that for all $N \geq N_0$ and all $r \geq r_0$, the system \eqref{eq: expected signature matching method} has a solution $\theta_{r,N} \in B_{\varepsilon}(\theta_0)$. 
    Furthermore, there exists a constant $\varepsilon_0 > 0$ such that for all $\varepsilon \in (0, \varepsilon_0]$, we can explicitly express $r_0$ as the smallest positive integer satisfying 
    \begin{align} \label{eq:condition_on_r0}
        \rho^{-r_0} < \frac{(1-\rho^{-1}) \beta \left(\frac{1}{p}\right)!}{16\  C^{1/p} \ \xi \ \left\|(\J_{P}(\theta_0))^{-1}\right\|_\infty} \varepsilon,
    \end{align}
    where the constants $\rho$ and $C$ are those from~\eqref{eq: C and rho}. The constant $\varepsilon_0$ merely depends on the choice of words $\{I_1,\ldots,I_d\}$, the expected signature of the driving signal $\p{X}$, and the point $\theta_0$.
\end{theorem}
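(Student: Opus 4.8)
The plan is to obtain the statement by assembling \Cref{lem:existence_of_miranda} and \Cref{lem:solution_given_miranda_domain} and then reading off the rate from the constants they produce; the substantive work has already been carried out in those two lemmas. First I would record that the only source of randomness in \Cref{lem:solution_given_miranda_domain} is the collection of empirical averages appearing in \eqref{eq: expected signature matching method}, one for each of the fixed words $I_1,\ldots,I_d$, and that by the strong law of large numbers each of these converges to $P^{I_k}(\theta_0)$ on a single full-measure event $\Omega_0$ that depends neither on $\varepsilon$ nor on the Miranda domain. Working on $\Omega_0$: given $\varepsilon>0$, if $\varepsilon>\varepsilon_0$ replace it by $\varepsilon_0$ (the balls are nested, so a solution in $B_{\varepsilon_0}(\theta_0)$ lies in $B_\varepsilon(\theta_0)$); for $\varepsilon\in(0,\varepsilon_0]$, \Cref{lem:existence_of_miranda} furnishes a Miranda domain $(D_\varepsilon,\cD_\varepsilon)$ with $D_\varepsilon\subset B_\varepsilon(\theta_0)$ for which the strict condition \eqref{eq: Miranda condition} holds, and \Cref{lem:solution_given_miranda_domain} applied to $(D_\varepsilon,\cD_\varepsilon)$ produces $N_0,r_0$ such that \eqref{eq: expected signature matching method} has a solution $\theta_{r,N}\in D_\varepsilon\subset B_\varepsilon(\theta_0)$ whenever $N\ge N_0$ and $r\ge r_0$. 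Since $\Omega_0$ does not depend on $\varepsilon$, this proves the existence assertion almost surely and simultaneously for all $\varepsilon>0$.

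For the quantitative statement I would track the constants through the two lemmas. In \Cref{lem:existence_of_miranda} one builds $D_\varepsilon$ around $\theta_0$ via the invertible map $\J_{P}(\theta_0)$ and its inverse, and (shrinking $\varepsilon_0$ to control the linearization error of $P$ at $\theta_0$) one can arrange that on the $k$-th coordinate faces
\begin{align}
    \left|P^{I_k}(\theta)-P^{I_k}(\theta_0)\right| \;>\; \frac{\varepsilon}{2\left\|(\J_{P}(\theta_0))^{-1}\right\|_\infty}, \qquad \theta\in D_k^+\cup D_k^-.
\end{align}
In \Cref{lem:solution_given_miranda_domain}, the perturbed system \eqref{eq: expected signature matching method} still satisfies the (non-strict) Miranda conditions on $(D_\varepsilon,\cD_\varepsilon)$ as soon as, on those faces, the total perturbation
\begin{align}
    \left|P^{I_k}_r(\theta)-P^{I_k}(\theta)\right| + \left|P^{I_k}(\theta_0)-\frac1N\sum_{i=1}^N \p{Y}_{\p{\theta_0}}(\sample_i)_{0,T}^{I_k}\cdot\chi_{E_\xi}(\sample_i)\right|
\end{align}
is smaller than the right-hand side above. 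I would impose that the first (deterministic) term be at most one quarter of that bound and leave the rest to be absorbed into the choice of $N_0$, which exists on $\Omega_0$ by the strong law of large numbers but carries no explicit rate. For the first term, summing the Picard estimate of \Cref{prop: N(theta) and M(theta)} as a geometric series on $E_\xi$, where $d_p(\p{X},\bzero)<\xi\le\cN(\|\p{\theta}\|)$ for $\theta\in\Theta_\xi$, yields the uniform bound \eqref{eq: rate of uniform convergence},
\begin{align}
    \sup_{\theta\in\Theta_\xi}\left|P^{I_k}_r(\theta)-P^{I_k}(\theta)\right| \;\le\; \frac{2\,C^{1/p}\,\xi}{\beta\left(\frac1p\right)!\,(1-\rho^{-1})}\,\rho^{-r}.
\end{align}
Requiring this to be less than $\tfrac14\cdot\frac{\varepsilon}{2\left\|(\J_{P}(\theta_0))^{-1}\right\|_\infty}$ rearranges exactly to the inequality \eqref{eq:condition_on_r0}; since $\rho^{-r}$ decreases to $0$, the smallest such integer $r_0$ exists and every $r\ge r_0$ works. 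The dependence of $\varepsilon_0$ is inherited verbatim from \Cref{lem:existence_of_miranda}.

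The only real obstacle is bookkeeping. One must check that the null set in \Cref{lem:solution_given_miranda_domain} does not depend on $\varepsilon$, which is precisely what lets the quantifier over $\varepsilon$ be pulled inside the almost-sure statement, and one must follow the three numerical factors accurately: the $2$ in the uniform Picard bound \eqref{eq: rate of uniform convergence}, the $2$ in the denominator of the face bound, and the $4$ coming from allocating one quarter of the perturbation budget to the deterministic error, whose product reproduces the constant $16$ in \eqref{eq:condition_on_r0}. The regime $\varepsilon>\varepsilon_0$ is handled trivially by monotonicity of the balls, as noted above.
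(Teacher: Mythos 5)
Your proposal is correct and follows essentially the same route as the paper: combine \Cref{lem:existence_of_miranda} with \Cref{lem:solution_given_miranda_domain} for the qualitative statement, then extract \eqref{eq:condition_on_r0} from the uniform Picard rate \eqref{eq: rate of uniform convergence} and the Miranda face gaps; your explicit remark that the null set from the strong law of large numbers is independent of $\varepsilon$ is a point the paper leaves implicit. One intermediate constant is off — the construction in \Cref{lem:existence_of_miranda} only guarantees a face gap of $\tfrac12\delta=\varepsilon/\bigl(4\|(\J_P(\theta_0))^{-1}\|_\infty\bigr)$, not $\varepsilon/\bigl(2\|(\J_P(\theta_0))^{-1}\|_\infty\bigr)$ as you claim — but since you allocate only one quarter of your (too optimistic) bound to the deterministic error, your threshold $\varepsilon/\bigl(8\|(\J_P(\theta_0))^{-1}\|_\infty\bigr)$ agrees with the paper's $\eta_k>\tfrac14\delta$, the leftover budget for the empirical error remains positive, and the constant $16$ in \eqref{eq:condition_on_r0} is recovered exactly.
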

\begin{proof}
    By~\Cref{lem:existence_of_miranda}, for all $\varepsilon>0$, there exists a Miranda domain $(D_\varepsilon, \cD_\varepsilon)$ such that $D_\varepsilon \subset B_\varepsilon(\theta_0)$ and~\eqref{eq: Miranda condition} holds. Then by applying~\Cref{lem:solution_given_miranda_domain}, we can conclude the first part of the theorem. \medskip

    To obtain an explicit expression for $r_0$, let $\varepsilon_0$ be defined as in~\Cref{lem:existence_of_miranda} and consider $\varepsilon \in (0, \varepsilon_0]$.
    Moreover, let $\eta_k$ be defined as in \eqref{eq: def of epsilon_k} for all $k\in [d]$. By the discussion in the proof of \Cref{lem:solution_given_miranda_domain}, we require $r_0\in \mathbb{N}$ such that for all $r\geq r_0$,
    \begin{align} \label{eq: how big r,N should be}
        &\sup_{\theta\in \Theta_\xi} \left|P_{r}^{I_k}(\theta) - P^{I_k}(\theta) \right| < \eta_k \text{ for all } k\in [d].
    \end{align}
    However, by~\eqref{eq: rate of uniform convergence}, this can occur when
    \begin{align}
        \sup_{\theta \in \Theta_\xi} \left| P_{r}^{I_k}(\theta) - P^{I_k}(\theta) \right|  \leq  \frac{2 \rho^{-r}}{(1-\rho^{-1})\beta\left(\frac{1}{p}\right)!} C^{\frac{1}{p}} \xi < \eta_k \text{ for all } k\in [d].
    \end{align}
    Using the definition of $\eta_k$ in~\eqref{eq: def of epsilon_k} together with \eqref{eq: refined miranda 1} and \eqref{eq: refined miranda 2}, we obtain $\eta_k > \frac14 \delta$ for all $k\in [d]$, where $\delta$ is defined in~\eqref{eq:delta}. Therefore, a sufficient condition for $r_0$ is
    \begin{align}
        \frac{2 \rho^{-r_0}}{(1-\rho^{-1})\beta\left(\frac{1}{p}\right)!} C^{\frac{1}{p}} \xi < \frac14 \delta =  \frac{\varepsilon}{8\left\| \left(\J_{P}(\theta_0)\right)^{-1} \right\|_\infty},
    \end{align}
    and thus we obtain the desired expression in~\eqref{eq:condition_on_r0}.
\end{proof}

We now conclude this section by proving that the function $P$ is in fact, differentiable almost everywhere.

\begin{proposition} \label{prop:P_diff_ae}
    The function $P: \Theta_\xi \to \R^d$ as defined in~\eqref{eq:P_def} is locally Lipschitz. Therefore $P$ is differentiable almost everywhere on $\Theta_\xi^\circ$. 
\end{proposition}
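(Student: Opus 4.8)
The plan is to prove that each coordinate function $P^{I_k}$ of $P$ is Lipschitz on every closed ball contained in $\Theta_\xi$; since $\unknown{\Theta}\cong\R^d$ is finite-dimensional, local Lipschitzness of $P$ together with Rademacher's theorem then yields differentiability almost everywhere on the open set $\Theta_\xi^\circ$. Fix $\theta_*\in\Theta_\xi$ and a closed ball $\overline{B}\subset\Theta_\xi$ centered at $\theta_*$. Since $\cM$ is non-increasing, $M^*\coloneqq\sup_{\theta\in\overline{B}}\cM(\|\p{\theta}\|)<\infty$. For every $\theta\in\overline{B}$ and $\sample\in E_\xi$ we have $d_p(\p{X}(\sample),\bzero)<\xi\le\cN(\|\p{\theta}\|)$, so by \Cref{prop: N(theta) and M(theta)} the $\cM(\|\p{\theta}\|)$-solution stays in $K_{\cM(\|\p{\theta}\|)}\subseteq K_{M^*}$; since $F_{\p{\theta},\cM(\|\p{\theta}\|)}$ and $F_{\p{\theta},M^*}$ agree on $K_{\cM(\|\p{\theta}\|)}$, the uniqueness assertions of \Cref{thm:universal_limit} and \Cref{thm: universal limit theorem 2} show that on $E_\xi$ this solution coincides with the $M^*$-solution of the same equation. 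Hence, after replacing the $\theta$-dependent truncation level with the fixed $M^*$, it suffices to control the $\theta$-dependence of the $M^*$-solution and of the signature components $(\p{Y}_{\p{\theta}})^{I_k}_{0,T}$.

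First I would check that the modified, lifted vector field depends Lipschitz-continuously on $\theta$ in the $\Lip(\gamma)$ norm, uniformly on $\overline{B}$. Since $F_{\p{\theta}}|_{K_{M^*}}$ is affine in $\p{s}$ (see~\eqref{eq:AB_vector_fields}) with coefficients affine in $\theta$, the map $\theta\mapsto F_{\p{\theta}}|_{K_{M^*}}$, valued in the Banach space of $\Lip(\gamma)$ functions on the bounded set $K_{M^*}$, is affine with Lipschitz constant $c(M^*)$; choosing the extension in \Cref{thm:modify_lipgamma} to depend linearly on the extended function (as in the construction of~\cite{singular_integrals}) gives $\|F_{\p{\theta},M^*}-F_{\p{\theta'},M^*}\|_{\Lip(\gamma)}\le C\,c(M^*)\,\|\theta-\theta'\|$. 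Applying \Cref{lem:product_lipgamma} with the fixed factor $\tens_{M^*}$ then yields the same kind of Lipschitz bound for $\p{F}_{\theta,M^*}$, and hence for the vector fields $h_{M^*}$ of \Cref{def:M_solution}.

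Next I would combine this with a quantitative form of the Universal Limit Theorem: local Lipschitz dependence of the solution of a rough differential equation on its vector field (measured in $\Lip(\gamma)$ norm), uniform over driving rough paths in the bounded set $\{\p{X}:d_p(\p{X},\bzero)<\xi\}$. This can be obtained either from the estimates underlying \cite[Theorem~5.3]{lyons_differential_2007} and \cite{friz_multidimensional_2010}, or by a direct induction comparing the $M^*$-Picard iterations $\pp{Z}_{\p{\theta},M^*}(r)$ and $\pp{Z}_{\p{\theta'},M^*}(r)$ — which share the same initialization $(\p{X},\bzero)$ — via the contraction estimate and the uniform bounds of \Cref{prop: N(theta) and M(theta)}, which hold on all of $[0,T]$ once $d_p(\p{X},\bzero)\le\cN(\|\p{\theta}\|)$. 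Either way one obtains $d_p\!\left(\pp{Z}_{\p{\theta},M^*}(\sample),\pp{Z}_{\p{\theta'},M^*}(\sample)\right)\le L\|\theta-\theta'\|$ with $L$ independent of $\sample\in E_\xi$. On $E_\xi$ these solutions lie in a fixed bounded subset of $\GRP_p(\drive\oplus\ssol)$ (by \Cref{thm:rough_integral}), on which the signature-component functional $\pp{Z}\mapsto(\p{Y})^{I_k}_{0,T}$ — with $\p{Y}=\pi_{\sol}(\pp{Z})$, recovered via \Cref{prop: signatures of the lift} — is bounded and Lipschitz in $d_p$. Composing and taking expectations gives
\[
    \bigl|P^{I_k}(\theta)-P^{I_k}(\theta')\bigr|\le\E\Bigl[\bigl|(\p{Y}_{\p{\theta}})^{I_k}_{0,T}-(\p{Y}_{\p{\theta'}})^{I_k}_{0,T}\bigr|\,\chi_{E_\xi}\Bigr]\le L'\|\theta-\theta'\|,
\]
so $P$ is Lipschitz on $\overline{B}$; as $\theta_*$ and the radius were arbitrary, $P$ is locally Lipschitz on $\Theta_\xi$, and Rademacher's theorem gives differentiability almost everywhere on $\Theta_\xi^\circ$.

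The main obstacle is the third step: the Universal Limit Theorem as quoted delivers only \emph{continuous} dependence of the solution on its vector field, while the argument requires a \emph{Lipschitz} estimate that is moreover uniform over the whole family of admissible (random) driving signals in $E_\xi$. The preliminary point handled in the first step — that the truncation level $\cM(\|\p{\theta}\|)$ is itself $\theta$-dependent, so the two $\theta$-solutions must first be realized as solutions of equations sharing a single modified vector field that varies Lipschitz-continuously in $\theta$ — is a necessary set-up for this.
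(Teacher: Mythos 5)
Your proposal is correct and follows essentially the same route as the paper: fix a common truncation level $M$ on a neighbourhood using the monotonicity of $\cM$, show $\theta\mapsto \p{F}_{\p{\theta},M}$ is Lipschitz in $\Lip(\gamma-1)$ norm via the linearity of the Stein extension and \Cref{lem:product_lipgamma}, transfer this to the solution, take expectations, and invoke Rademacher. The ``main obstacle'' you flag — a quantitative Lipschitz dependence of the solution on the vector field, uniform over driving signals in $E_\xi$ — is exactly what the paper supplies by citing \cite[Theorem 10.26]{friz_multidimensional_2010}, so your argument closes as intended.
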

\begin{proof}
The proof is given in \Cref{appendix: proofs}.
\end{proof}

\section{Experiments} \label{sec:experiments}
In this section, we evaluate the performance of the Expected Signature Matching Method in estimating the unknown parameters of an underlying path-dependent SDE using a number of its observed trajectories. The code, generated data, and experimental results associated to this section are available at our GitHub repository\footnote{\url{https://github.com/pardis-semnani/signature-SDE-parameter-estimation}}. 
\medskip

In all the experiments in this section, we set $N\coloneqq 2000$, $r \coloneqq 3$, and $q \coloneqq 3$. To perform each experiment, we first select the parameters $m,n,d,\delta t,T$, several $d$-subsets of the words $\cW([m]_0,q)$, as well as a linear path-dependent SDE 
\begin{align} \label{eq: simulations path dependent}
    dY_t = F_{\p{\theta_0}}(\p{Y}_{t})  d\p{X}_t,
\end{align} 
where $\p{\theta_0} = (\trueknown,\theta_0) \in \known{\Theta} \oplus \unknown {\Theta} = \Theta = \Mat_{m, n+1} (\ssol)$  with $\dim \unknown{\Theta}=d$ denotes the parameters, $\p{X}=(t,\p{W})$ is the driving noise, and $\p{W}$ is an $n$-dimensional Brownian motion.  We then repeat the following procedure 100 times:
\begin{enumerate}
    \item Generate $N$ trajectories of the solution to the SDE in~\eqref{eq: simulations path dependent}.
    \item Apply the ESMM corresponding to each selected set of words to the generated trajectories in order to estimate $\theta_0$.
\end{enumerate}
To simulate trajectories of \eqref{eq: simulations path dependent}, we consider the lifted version $\p{F}_{\p{\theta_0}}$ of the vector field $F_{\p{\theta_0}}$, and then use the \texttt{Diffrax} package~\cite{kidger2021on} in \texttt{Python} with step size $\delta t$ and the \texttt{Heun Stratonovich} solver to simulate solution trajectories of the  path-independent SDE equivalent to \eqref{eq: simulations path dependent}, i.e.
\begin{align} \label{eq: simulations path independent}
    d\p{Y}_t = \p{F}_{\p{\theta_0}}(\p{Y}_{t})  d\p{X}_t, \ \text{ with } \p{Y}_0 = \bone \text{ and } \p{F}_{\p{\theta_0}}(\p{Y}_t) = \tens(\p{Y}_t) \circ F_{\p{\theta_0}}(\p{Y}_t),
\end{align}
 over the time interval $[0,T]$. The underlying paths in the solution trajectories of the path-independent SDE \eqref{eq: simulations path independent} are $\ssol$-valued. We take the projection of each of these paths onto $\sol$ to extract the underlying paths in the solution trajectories of the path-dependent SDE \eqref{eq: simulations path dependent}, denoted by $\{Y_{\p{\theta_0}}(\sample_i)\}_{i\in [N]}$. Then, we use the \texttt{iisignature} package~\cite{iisignature} in \texttt{Python} to obtain the signatures up to level $q$ of these paths, which we denote by $\{\p{Y}_{\p{\theta_0}}(\sample_i)\}_{i\in [N]}$. Note that although the solutions to~\eqref{eq: simulations path independent} involve these signatures, we choose to approximate them from their underlying paths $\{Y_{\p{\theta_0}}(\sample_i)\}_{i\in [N]}$ as these paths are typically what a user of the ESMM would observe in practice.
\medskip
 
Now to estimate $\theta_0$, we use the \texttt{Macaulay2}~\cite{M2} package \texttt{NumericalAlgebraicGeometry}~\cite{leykin2011numerical,NumericalAlgebraicGeometrySource} to solve the polynomial system
\begin{align} \label{eq: expected signature mathcing method - modified}
    \left\{P^{I_k}_{r}(\theta) = \frac{1}{N}\sum_{i=1}^N \p{Y}_\p{\theta_0}(\sample_i)_{0,T}^{I_k} , \ \  k = 1, \ldots, d\right\}
\end{align}
for each selected set of words $\{I_1,\ldots,I_d\}\subset \cW([m]_0,q)$.
We report all the real solutions of this polynomial system. Furthermore, we let
\begin{align}\label{eq: estimate in each trial}
    \hat\theta\coloneqq \arg\min\{\|\theta - \theta_0\|_1: \theta \text{ is a real solution to the polynomial system~\eqref{eq: expected signature mathcing method - modified}.}\},
\end{align}
and report the mean and standard deviation of the obtained estimates $\hat\theta$ across all the 100 trials.

\begin{remark}
    As we note that $\lim_{T \to 0} \cP(d_p(\p{X}|_{\Delta_{T}}, \bzero) < \xi) = 1$, for small values of $T$, we consider the simplifying assumption in the polynomial system \eqref{eq: expected signature mathcing method - modified}, where we omit the term $\chi_{E_\xi}(\sample_i)$ (when compared with~\eqref{eq: expected signature matching method}). This also allows us to use $\mathbb{E}\left[\p{X}^J\right]$ in the coefficents of $P_r^{I_k}$, which is computed using the explicit formula in~\cite[Theorem 1]{ladroue_expectation_2010}. Our experiments show that even with this assumption, our method is able to effectively estimate parameters. 
\end{remark}

\begin{experiment}
\label{experiment: 1 dimensional}
In this experiment, we set $m=n=1$, and use step size $\delta t = 0.001$ over the interval $[0,T]$ with $T=0.2$. The SDE model in this experiment contains $d=3$ unknown parameters $(\param^1, \param^2, \param^3)$, and is given by
\begin{align}
    \begin{dcases}
        d {Y}_t^{(0)} = dt,\\
        d {Y}_t^{(1)} = -\param^1 \left(\p{Y}_t^{\emptyset} - \p{Y}_t^{(1)}\right) dt + \left(\param^2 \p{Y}_t^{\emptyset} + \param^3 \p{Y}_t^{(1,1)} \right) \circ d\p{W}_t^{(1)},
    \end{dcases}
\end{align}
where we recall that $\p{Y}_t^{\emptyset}=1$.  The true parameter is $\theta_0 = (-1, 0, 4)$, and \Cref{tab: exp 1} shows the component-wise means and standard deviations of the estimates $\hat\theta$, defined in~\eqref{eq: estimate in each trial}, which are obtained over the 100 trials via the ESMM with the sets of words
\begin{align}\label{eq: set of words 1}
    \cW_1 = \{ (0, 1, 0), (0, 1, 1), (1, 0, 1)\}, \quad
    \cW_2 = \{(1), (1, 1), (0, 1, 1)\}.
\end{align}
\Cref{fig: exp 1-all} in \Cref{appendix: figures for experiments} illustrates \emph{all} the real solutions to the polynomial system~\eqref{eq: expected signature mathcing method - modified} in each trial and for each set of words. Our results show that for both sets of words, the ESMM effectively estimates the correct parameters, with slightly more error for the second order signature term in the diffusion.\medskip

This experiment allows for a comparison between our version of the ESMM and the original version in~\cite{papavasiliou_parameter_2011}. In \cite[Example 5.1]{papavasiliou_parameter_2011}, parameter estimation is done for the same SDE model and the same true parameter values, where two parameters $(\param^1, \param^3)$ are considered unknown. 
Estimates from a single trial using $N=2000$, $r=3$, $T=0.25$ and the word set $\{(1),(1,1)\}$ are reported, and they are comparable to our results.
\begin{table}[h]
\centering
\renewcommand{\arraystretch}{1.3} 
\begin{tabular}{c !{\vrule width 1pt} c|c c c}
\toprule
 \multicolumn{2}{c|}{} & $\param^1$ & $\param^2$ & $\param^3$ \\
\hline
\multirow{2}{*}{$\cW_1$} & mean & -1.0135 &	-0.1117	& 4.2444 \\
\cline{2-5}
& std dev & 0.0017	& 0.0014	& 0.1820 \\
\specialrule{1.25pt}{0pt}{0pt}
\multirow{2}{*}{$\cW_2$} & mean & -0.9956& 0.0413 & 4.5703\\
\cline{2-5}
& std dev & 0.0005 & 	0.0005 &	0.6102 \\
\bottomrule
\end{tabular}
\caption{Mean and standard deviation of the estimated components of the unknown parameter $(\param^1, \param^2, \param^3)=(-1,0,4)$ in \Cref{experiment: 1 dimensional}, obtained using ESMM over 100 trials. The first two and last two rows correspond to the sets of words $\cW_1$ and $\cW_2$ in~\eqref{eq: set of words 1} respectively.}
\label{tab: exp 1}
\end{table}
\end{experiment}

\begin{experiment}
\label{experiment: 2 dimensional}
In this experiment, we set $m = 2$, $n = 1$, $\delta t=0.01$, and $T=0.2$. The SDE model under consideration involves $d=5$ unknown parameters $(\param^1, \param^2, \param^3, \param^4, \param^5)$, and is given by
\begin{align}
\begin{dcases}
dY_t^{(0)} = dt, \\
dY_t^{(1)} = \param^1 \left(\p{Y}_t^{(2,1)} - \p{Y}_t^{(1,2)}\right) dt + \left(\param^2\p{Y}_t^{\emptyset} + \param^3 \p{Y}_t^{(2)} \right) \circ d\p{W}_t^{(1)},\\
d Y_t^{(2)} = \param^4 \left(\p{Y}_t^{(2,1)} - \p{Y}_t^{(1,2)}\right) dt + \param^5\p{Y}_t^{\emptyset} \circ d\p{W}_t^{(1)}.
\end{dcases}
\end{align}
The true parameter is $\theta_0 = (-1,5,1,-2,3)$, and for each of the 100 trials, we apply the ESMM to the sets of words
\begin{align}\label{eq: set of words 2}
    \cW_3 \coloneqq \{ (1) , (1, 2) , (2, 1) , (2, 2) , (0, 1, 1)  \}, \quad \cW_4 \coloneqq \{(1, 2), (0, 1, 0), (0, 1, 1), (0, 2, 1), (1, 1, 0) \}.
\end{align}
\Cref{tab: exp 2}  presents the means and standard deviations of the components of the estimates $\hat\theta$ obtained across 100 trials, as specified in~\eqref{eq: estimate in each trial}. We observe that the ESMM accurately estimates components $\theta^2, \theta^3, \theta^5$, while the estimates for $\theta^1, \theta^4$ were better when the ESMM uses the word set $\cW_4$.
While selecting the optimal word set is beyond the scope of this paper, these empirical results suggest that the choice of word set influences the performance of the ESMM, and would be an interesting avenue for future work. \Cref{fig: exp 2-all} in \Cref{appendix: figures for experiments} shows the components of \emph{all} the real solutions to the polynomial system~\eqref{eq: expected signature mathcing method - modified} obtained in each trial and for each set of words.

\begin{table}[h]
\centering
\renewcommand{\arraystretch}{1.3} 
\begin{tabular}{c !{\vrule width 1pt} c|c c c c c}
\toprule
 \multicolumn{2}{c|}{} & $\param^1$ & $\param^2$ & $\param^3$ & $\param^4$ & $\param^5$ \\
\hline
\multirow{2}{*}{$\cW_3$} & mean &  0.3851 & 4.8549 & 1.0133 & -0.7635 & 2.9431 \\
\cline{2-7}
& std dev &  2.7267	& 0.2567 & 0.2167 & 1.5968 & 0.1668  \\
\specialrule{1.25pt}{0pt}{0pt}
\multirow{2}{*}{$\cW_4$} & mean &  -1.2528	 & 5.0165 & 0.9763 & -1.3885 & 3.0302  \\
\cline{2-7}
& std dev &  0.8674 & 0.0748 & 0.1642 & 0.2941 & 0.0795  \\
\bottomrule
\end{tabular}
\caption{Mean and standard deviation of the estimated components of the unknown parameter $(\theta^1, \theta^2, \theta^3, \theta^4, \theta^5)=(-1,5,1,-2,3)$ in \Cref{experiment: 2 dimensional}, obtained using ESMM over 100 trials. The first two and last two rows correspond to the sets of words $\cW_3$ and $\cW_4$ in~\eqref{eq: set of words 2} respectively.}
\label{tab: exp 2}
\end{table}
\end{experiment}

\begin{experiment}
\label{experiment: 3 dimensional}

In this experiment, we set $m=n=3$ and simulate solution trajectories with step size $\delta t = 0.001$ over the interval $[0,T]$ with $T=0.1$. The following system defines our SDE model, which involves
$d = 6$ unknown parameters $(\param^1, \param^2, \param^3, \param^4, \param^5, \param^6)$. 
\begin{align}
    \begin{dcases}
        d{Y}_t^{(0)} = dt,\\
        d{Y}_t^{(1)} = \param^1 \p{Y}_t^{(2)} dt + \param^2 \circ d\p{W}_t^{(1)}, \\
        d{Y}_t^{(2)} = \theta^3 \p{Y}_t^{(1)} dt + \theta^4 \circ d \p{W}_t^{(2)}, \\
        d{Y}_t^{(3)} = \param^5 \left(\p{Y}_t^{(2,1)} - \p{Y}_t^{(1,2)} \right) + \param^6 \circ d\p{W}_t^{(3)}.
    \end{dcases}
\end{align}
This model is an example of a path-dependent \emph{stochastic causal kinetic model}~\cite[Eq. (11)]{Peters2022}. The true parameter in this experiment is $\theta_0 = (1,1,-1,1,-5,1)$, and in each trial we obtain estimates of this parameter by performing the ESMM with respect to the sets of words
\begin{align}\label{eq: set of words 3}
    \cW_5 = \{(0, 3), (1, 1), (1, 2), (2, 1), (2, 2), (3, 3)\},\quad
    \cW_6 = \{(3), (1, 1), (1, 2), (2, 1), (2, 2), (3, 3)\}.
\end{align}
\Cref{tab: exp 3} indicates the component-wise means and standard deviations of the estimates $\hat\theta$ obtained over 100 trials, as specified in~\eqref{eq: estimate in each trial}. We observe that the ESMM accurately estimates the parameter set for both choices of word sets $\cW_5, \cW_6$. We note that $\theta^5$ was more difficult to estimate. \Cref{fig: exp 3-all} in \Cref{appendix: figures for experiments} shows \emph{all} the real solutions to the polynomial system~\eqref{eq: expected signature mathcing method - modified} across the 100 trials and corresponding to each set of words.
\begin{table}[h]
\centering
\renewcommand{\arraystretch}{1.3} 
\begin{tabular}{c !{\vrule width 1pt} c|c c c c c c}
\toprule
 \multicolumn{2}{c|}{} & $\param^1$ & $\param^2$ & $\param^3$ & $\param^4$ & $\param^5$ & $\param^6$\\
\hline
\multirow{2}{*}{$\cW_5$} & mean &  1.0264	& 1.0007 &	-1.0082 &	0.999 &	-6.415 & 1.0047  \\
\cline{2-8}
& std dev &  0.3177	& 0.0158	& 0.3109	& 0.0158	& 17.0444	& 0.0162 \\
\specialrule{1.25pt}{0pt}{0pt}
\multirow{2}{*}{$\cW_6$} & mean &  1.0243	& 1.001	& -1.0107	& 0.9989 &	-7.6045	& 1.0047 \\
\cline{2-8}
& std dev &  0.3186	& 0.0156	&0.3115	 & 0.0159	& 23.4885 &	0.0163 \\
\bottomrule
\end{tabular}
\caption{Mean and standard deviation of the estimated components of the unknown parameter $(\theta^1, \theta^2, \theta^3, \theta^4, \theta^5, \theta^6)=(1,1,-1,1,-5,1)$ in \Cref{experiment: 3 dimensional}, obtained using ESMM over 100 trials. The first two and last two rows correspond to the sets of words $\cW_5$ and $\cW_6$ in~\eqref{eq: set of words 3} respectively.}
\label{tab: exp 3}
\end{table}

Finally, we solve the polynomial system~\eqref{eq: expected signature mathcing method - modified} with $N = 200,000$ samples, which are obtained by aggregating the 2000 samples generated in each of the 100 trials. Among the real solutions to this polynomial system, the closest to $\theta_0$ in $L^1$ norm is $$(1.026, 1.001, -1.0079, 0.9993, -6.4232, 1.0049)$$ for the word set $\cW_5$, and $$(1.026, 1.001, -1.0079, 0.9993, -6.8913, 1.0049)$$ for the word set $\cW_6$.
\end{experiment}

\begin{remark}
For a Brownian motion trajectory $\p{W}(\sample)$, note that $-W^{(1)}(\sample)$ is also the underlying path of another Brownian motion trajectory. Therefore, the laws of the $\cM(\|\p{\theta_0}\|)$-solutions of the signatures SDEs
\begin{align}
    dY_t = A_{\p{\theta_0}}(\p{Y}_t) dt + B_{\p{\theta_0}}(\p{Y}_t) \circ d\p{W}_t 
    \ \text{ and } \ 
    dY_t = A_{\p{\theta_0}}(\p{Y}_t) dt  -B_{\p{\theta_0}}(\p{Y}_t) \circ d\p{W}_t 
\end{align}
are the same. This explains why, for each estimate $\hat\theta$, the polynomial system~\eqref{eq: expected signature mathcing method - modified} also admits a solution whose drift component is close to the drift component of $\hat\theta$, and whose diffusion component is close to the negative of the diffusion component of $\hat\theta$; see Figures~\ref{fig: exp 1-all} to~\ref{fig: exp 3-all} in \Cref{appendix: figures for experiments}.
\end{remark}

\section{Non-identifiability of Parameters from the Law} 
\label{sec:non-identifiability}
The characteristic property of the signatures in~\Cref{thm:univ_char} ensures that the distribution of the solution $\p{Y}_\theta$ of a linear signature SDE, restricted to noise terms in $E_\xi$, is uniquely characterized by the restricted expected signature of the solution $\E[\p{Y}_\theta \cdot \chi_{E_\xi}]$. Therefore, the Expected Signature Matching Method identifies the unknown parameters of the SDE to the furthest extent possible given the observed distribution of the solution. However, in this section, we show that signature RDEs with distinct parameters can admit the same solution for sufficiently bounded driving signals. Therefore, identifying a unique parameter set giving rise to the observed trajectories of the solution may not be possible.\medskip 

Here, as before, we assume that $\drive \cong \R^{n+1}$ and $\sol \cong \R^{m+1}$, where the extra coordinate for time is included in the zeroth coordinate. Recall that a signature RDE, where the vector field depends on $\p{Y}_t \in \ssol$ up to level $q$, can be written as 
\begin{align}
\label{eq: linear signature RDE}
    dY_t = F_\theta(\p{Y}_t) d\p{X}_t,
\end{align}
for some $\theta\in \Theta$, where $F_\theta : \ssol \to \LL(\drive, \sol)$ is defined in~\eqref{eq:F_theta_def}.  \medskip

Next, we begin to define two RDEs. 
Choose $q' \in \N$ such that $2q' \leq q$ and fix a linear functional $H \in \text{L}(\ssol, \mathbb{R})$ which does not depend on the final coordinate; in other words,
$
    H(\p{Y}_t) = \langle \lambda, \p{Y}_t\rangle,
$ 
where $\lambda\in \ssol$, and $\lambda^I \neq 0$ only if $I \in \cW([m-1]_0,q')$. Then fix a parameter $\nu\in\Theta$
such that for $i \in [m-1]$ and $j \in [n]_0$, $\nu_{i,j}^I = 0$ if $|I|>q'$. 
Now consider the following path-dependent~RDE,
\begin{align}
\label{eq: identifiability 1}
& \begin{dcases}
  \pmat{dY_t^{(0)} & \cdots & dY_t^{(m-1)}}^T  = F^{0, \ldots, m-1}_\nu(\p{Y}_t) d\p{X}_t,\\
  dY^{(m)}_t = \sum_{I \in \cW([m-1]_0, q')} \lambda^I \p{Y}_t^{I^{-}} d\p{Y}_t^{I^f},
\end{dcases} 
\end{align}
where $F^{0, \ldots, m-1}_\nu$ consists of the rows $0, \ldots, m-1$ of $F_\nu$.
Thus, the coordinates $0, \ldots, m-1$ evolve with respect to a linear signature RDE parametrized by $\nu$, while the final coordinate $Y^{(m)}_t$ is given in terms of the functional $H(\p{Y}_t)$.
Note that using the shuffle identity, \eqref{eq: identifiability 1} is a linear signature RDE and can be written in the form of~\eqref{eq: linear signature RDE} with respect to some $\theta_1\in \Theta$. \medskip

To define a second linear signature RDE with distinct parameters which yields the same solutions as~\eqref{eq: identifiability 1}, we introduce ``hidden dynamics'' to the system. In particular, take $\kappa \in \Theta$ such that for $i\in [m]$ and $j\in [n]_0$,  $\kappa_{i,j}^I =0$ if $|I|>q'$. We define 
\begin{align}
    \begin{dcases}\label{eq: identifiability 2}
        dY_t^{(0)} = dt, \\
        \pmat{dY_t^{(1)} & \cdots & dY_t^{(m)}}^T = \left(F^{1,\ldots, m}_{\theta_1} (\p{Y}_t) + \left(\p{Y}_t^{(m)} - H(\p{Y}_t) \right) F^{1,\ldots, m}_\kappa(\p{Y}_t) \right) d\p{X}_t,
    \end{dcases}
\end{align}
where $F^{1,\ldots, m}_\theta(\p{Y}_t)$ excludes the zeroth row of $F_\theta(\p{Y}_t)$. Using the shuffle identity, \eqref{eq: identifiability 2} is also a linear signature RDE, i.e. for some $\theta_2\in \Theta$, it can be written in the form of~\eqref{eq: linear signature RDE}. We note that if $\p{Y}$ is a solution to ~\eqref{eq: identifiability 1}, then $\p{Y}^{(m)}_t = H(\p{Y}_t)$ for all $t\in [0,T]$, so $\p{Y}$ is also a solution to~\eqref{eq: identifiability 2}.

\begin{proposition} \label{prop: non-identifiability}
For a sufficiently bounded driving signal trajectory $\p{X}$ satisfying 
\begin{align} \label{eq: identifiability bound}
    d_p(\p{X}, \p{0}) < \min\{\cN(\|\theta_1\|), \cN(\|\theta_2\|)\},
\end{align}
the $\cM(\|\theta_1\|)$-solution 
to~\eqref{eq: identifiability 1} 
coincides with the $\cM(\|\theta_2\|)$-solution to ~\eqref{eq: identifiability 2}. Therefore, the distributions of $\p{Y}_{{\theta_1}} \cdot \chi_{E_\xi}$ and $\p{Y}_{{\theta_2}}\cdot \chi_{E_\xi}$ are the same if $\min\{\cN(\|\theta_1\|), \cN(\|\theta_2\|)\}\geq \xi$.  
\end{proposition}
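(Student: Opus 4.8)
The plan is to leverage the observation recorded immediately before the statement: if $\p{Y}$ is a (genuine) solution to~\eqref{eq: identifiability 1}, then its last coordinate satisfies $\p{Y}^{(m)}_t = H(\p{Y}_t)$ for all $t\in[0,T]$, which makes the factor $\p{Y}^{(m)}_t - H(\p{Y}_t)$ multiplying $F^{1,\ldots,m}_\kappa$ in~\eqref{eq: identifiability 2} vanish along $\p{Y}$, so that $\p{Y}$ also solves~\eqref{eq: identifiability 2}. Concretely I would proceed in four steps: (1) use~\eqref{eq: identifiability bound}, \Cref{prop: N(theta) and M(theta)}, and \Cref{thm: universal limit theorem 2} to realize the $\cM(\|\theta_1\|)$-solution $\p{Y}_{\theta_1}$ of~\eqref{eq: identifiability 1} as a genuine, uniformly bounded geometric $p$-rough path whose trajectory stays strictly inside the ball $K_{\cM(\|\theta_1\|)}$; (2) prove the identity $\p{Y}^{(m)}_{\theta_1,t} = H(\p{Y}_{\theta_1,t})$ rigorously; (3) deduce that $\p{Y}_{\theta_1}$ solves~\eqref{eq: identifiability 2} as well; and (4) identify $\p{Y}_{\theta_1}$ with the $\cM(\|\theta_2\|)$-solution $\p{Y}_{\theta_2}$ of~\eqref{eq: identifiability 2} via a uniqueness argument.

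For step~(1), the hypothesis~\eqref{eq: identifiability bound} gives $d_p(\p{X},\p{0}) < \cN(\|\theta_1\|)$, so by \Cref{prop: N(theta) and M(theta)} both the $\cM(\|\theta_1\|)$-Picard iterations and the $\cM(\|\theta_1\|)$-solution of~\eqref{eq: identifiability 1} satisfy $\|(\p{Y}_{\theta_1})_{s,t}\| < \cM(\|\theta_1\|)$ for all $(s,t)\in\Delta_T$; in particular $\p{X}$ lies in the set $G_{\cM(\|\theta_1\|)}$ of \Cref{thm: universal limit theorem 2}, so $\p{Y}_{\theta_1}\in\GRP_p(\sol)$ is a genuine solution, \Cref{prop: signatures of the lift} applies, and since the trajectory never reaches $\partial K_{\cM(\|\theta_1\|)}$ the modified vector fields $\tens_{\cM(\|\theta_1\|)}$ and $F_{\theta_1,\cM(\|\theta_1\|)}$ coincide with the affine maps $\tens$ and $F_{\theta_1}$ along it. For step~(2), the last-coordinate equation of~\eqref{eq: identifiability 1} reads $dY^{(m)}_t = \sum_{I}\lambda^I\,\p{Y}_t^{I^-}\,d\p{Y}_t^{I^f}$; since $I^f$ has length one, $d\p{Y}_t^{I^f}$ is the corresponding path increment, and the recursive definition of iterated integrals gives $\int_0^t\p{Y}_u^{I^-}\,d\p{Y}_u^{I^f} = \p{Y}_{0,t}^{I}$, so summing over $I$ and using $\p{Y}_0 = \bone$ yields $\p{Y}^{(m)}_{0,t} = \langle\lambda,\p{Y}_{0,t}\rangle = H(\p{Y}_t)$. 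For step~(3), substituting $\p{s} = \p{Y}_{\theta_1,t}$ into the vector field of~\eqref{eq: identifiability 2} makes the factor $\p{s}^{(m)} - H(\p{s})$ vanish, so along $\p{Y}_{\theta_1}$ equation~\eqref{eq: identifiability 2} reduces to $dY^{(0)}_t = dt$ together with the rows $1,\ldots,m$ given by $F^{1,\ldots,m}_{\theta_1}(\p{Y}_t)\,d\p{X}_t$, which is exactly~\eqref{eq: identifiability 1}; more precisely, the coupled $\cM(\|\theta_1\|)$-solution $\pp{Z}_1$ of~\eqref{eq: identifiability 1}, whose $\ssol$-projection is by \Cref{prop: signatures of the lift} the signature of $\p{Y}_{\theta_1}$, satisfies the fixed-point equation defining $M$-solutions of~\eqref{eq: identifiability 2} (see \Cref{def:M_solution}) once the vector fields are seen to agree along the trajectory.

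Step~(4) is where I expect the main obstacle, since~\eqref{eq: identifiability 1} and~\eqref{eq: identifiability 2} are run with the a priori distinct truncation radii $\cM(\|\theta_1\|)$ and $\cM(\|\theta_2\|)$, to which the $M$-solution framework is sensitive. The resolution I propose is to pass to $M \coloneqq \max\{\cM(\|\theta_1\|),\cM(\|\theta_2\|)\}$. The hypothesis~\eqref{eq: identifiability bound} also gives $d_p(\p{X},\p{0}) < \cN(\|\theta_2\|)$, so by \Cref{prop: N(theta) and M(theta)} the coupled $\cM(\|\theta_2\|)$-solution $\pp{Z}_2$ of~\eqref{eq: identifiability 2} has underlying path $\p{Y}_{\theta_2}$ strictly inside $K_{\cM(\|\theta_2\|)}\subseteq K_M$. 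On the interior of $K_M$ the modified fields $\p{F}_{\theta_1,\cM(\|\theta_1\|)}$, $\p{F}_{\theta_2,\cM(\|\theta_2\|)}$ and $\p{F}_{\theta_2,M}$ all equal the affine fields $\tens\circ F_{\theta_1}$, resp. $\tens\circ F_{\theta_2}$, and by steps~(2)--(3) one has $F_{\theta_2}(\p{s}) = F_{\theta_1}(\p{s})$ whenever $\p{s}^{(m)} = H(\p{s})$, in particular along $\p{Y}_{\theta_1}$. Hence both $\pp{Z}_1$ and $\pp{Z}_2$ satisfy the same fixed-point equation $\pp{Z} = \int h_M(\pp{Z})\,d\pp{Z}$ built from $\p{F}_{\theta_2,M}$ with $\pi_{\drive}(\pp{Z}) = \p{X}$, i.e.\ both are coupled $M$-solutions of~\eqref{eq: identifiability 2}. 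Since $\p{F}_{\theta_2,M}$ is $\Lip(\gamma)$, the Universal Limit Theorem~\Cref{thm:universal_limit} gives a unique such coupled solution, so $\pp{Z}_1 = \pp{Z}_2$ and therefore $\p{Y}_{\theta_1} = \p{Y}_{\theta_2}$, which is the claimed coincidence.

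Finally, for the statement about laws, suppose $\min\{\cN(\|\theta_1\|),\cN(\|\theta_2\|)\}\geq\xi$. For every $\sample\in E_\xi$ we have $d_p(\p{X}(\sample),\p{0}) < \xi \leq \min\{\cN(\|\theta_1\|),\cN(\|\theta_2\|)\}$, so~\eqref{eq: identifiability bound} holds for that sample and the first part of the proposition gives $\p{Y}_{\theta_1}(\sample) = \p{Y}_{\theta_2}(\sample)$; off $E_\xi$ the indicator $\chi_{E_\xi}$ vanishes. Hence $\p{Y}_{\theta_1}\cdot\chi_{E_\xi} = \p{Y}_{\theta_2}\cdot\chi_{E_\xi}$ pointwise on $\samplespace$, and in particular these processes have the same distribution. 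The only genuinely delicate point in the whole argument is the reconciliation of the two truncation radii in step~(4): one must be careful that the claimed equality of solutions holds at the level of coupled rough paths (so that \Cref{thm:universal_limit} can be invoked), which is why \Cref{prop: signatures of the lift} is needed to guarantee that $\pi_{\ssol}(\pp{Z}_1)$ is exactly the signature of $\p{Y}_{\theta_1}$ and that the same coupled rough path solves both fixed-point equations.
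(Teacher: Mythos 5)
Your proposal follows the same route as the paper's proof: derive the constraint identity $\p{Y}^{(m)}_{\theta_1,t}=H(\p{Y}_{\theta_1,t})$ from the last line of~\eqref{eq: identifiability 1}, conclude that the solution of~\eqref{eq: identifiability 1} also solves~\eqref{eq: identifiability 2}, and invoke uniqueness; the paper's argument is essentially a three-line version of your steps (2)--(4). Your step (4) is in fact more careful than the paper, which simply asserts that ``the $\cM(\|\theta_2\|)$-solution and the solution to~\eqref{eq: identifiability 2} are the same''; passing to the common radius $M=\max\{\cM(\|\theta_1\|),\cM(\|\theta_2\|)\}$ and appealing to the uniqueness statement of \Cref{thm:universal_limit} is a sound way to make that step precise.

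One caveat applies to your step (3) (and equally to the paper's own proof): at the rough-path level, the claim that $\pp{Z}_1$ satisfies the fixed-point equation built from $F_{\theta_2}$ does not follow merely from the pointwise identity $F_{\theta_2}(\p{s})=F_{\theta_1}(\p{s})$ along the trajectory. The rough integral $\int h(\pp{Z})\,d\pp{Z}$ is assembled from derivatives of $h$ at trajectory points, and the two vector fields agree only on the constraint set $\{\p{s}^{(m)}=H(\p{s})\}$ --- and even there only modulo shuffle identities, which hold for group-like elements of $\ssol$ but not on a neighbourhood in $\ssol$ --- so their higher derivatives at trajectory points generally differ. The clean repair, consistent with how the paper argues in the proof of \Cref{thm: polynomial equations}, is to approximate $\p{X}$ by bounded-variation paths $X(k)$ with $d_p(X(k),\p{0})$ below the bound in~\eqref{eq: identifiability bound}: for ODEs the substitution argument is valid (the lifted solution of~\eqref{eq: identifiability 1} driven by $X(k)$ is a genuine signature, so the shuffle identities and the constraint hold exactly and the two lifted ODE solutions coincide), and one then passes to the limit using the continuity of both solution maps from part (2) of \Cref{thm: universal limit theorem 2} together with \Cref{prop: N(theta) and M(theta)}. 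With this patch your argument is complete, and it also makes the radius reconciliation in your step (4) essentially automatic, since the approximating ODE solutions agree outright.
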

\begin{proof}
The proof is given in \Cref{appendix: proofs}.
\end{proof}
\begin{example}
    Define 
    $H(\p{Y}_t) \coloneqq \frac12(\p{Y}_t^{(1,2)} - \p{Y}_t^{(2,1)})$, 
    and consider the following path-dependent RDEs:
    \begin{align} \label{eq: non-identifiablility example 1}
        &\begin{dcases}
            d{Y}_t^{(0)} = dt,\\
            d{Y}_t^{(1)} = \p{Y}_t^{(1)} dt + d\p{W}_t^{(1)},\\
            d{Y}_t^{(2)} = -\p{Y}_t^{(2)} dt + d\p{W}_t^{(2)}, \\
            d{Y}_t^{(3)} = \left(-\p{Y}_t^{(1,2)} - \p{Y}_t^{(2,1)} \right) dt - \frac12 \p{Y}_t^{(2)} \circ d\p{W}_t^{(1)}+ \frac12 \p{Y}_t^{(1)}\circ d\p{W}_t^{(2)}.
        \end{dcases}\\
        \label{eq: non-identifiablility example 2}
        &\begin{dcases}
            d{Y}_t^{(0)} = dt,\\
            d{Y}_t^{(1)} =  \left(\p{Y}_t^{(1)} + H(\p{Y}_t) -\p{Y}_t^{(3)} 
            \right) dt + d\p{W}_t^{(1)},\\
            d{Y}_t^{(2)} = \left(-\p{Y}_t^{(2)}  + H(\p{Y}_t) -\p{Y}_t^{(3)}
            \right) dt + d\p{W}_t^{(2)},\\
            d{Y}_t^{(3)} = \left(-\p{Y}_t^{(1,2)} - \p{Y}_t^{(2,1)} + H(\p{Y}_t) -\p{Y}_t^{(3)} 
            \right) dt - \frac12 \p{Y}_t^{(2)}\circ d\p{W}_t^{(1)} +\frac12 \p{Y}_t^{(1)}\circ d\p{W}_t^{(2)}.
        \end{dcases}
    \end{align}
    Using the Stratonovich midpoint scheme with step size 0.001, we numerically compute the underlying paths in the solutions to these two RDEs corresponding to a sampled trajectory of the Brownian motion on the interval $[0,2]$. See \Cref{fig:non-identifiability}. Note that after a certain point of time, the bound condition in~\eqref{eq: identifiability bound} on the driving signal is no longer satisfied. As a result, we can no longer guarantee that the solutions will coincide. 
    \begin{figure}[t]
  \centering
  \begin{subfigure}[b]{0.48\textwidth}
    \centering
    \includegraphics[width=0.7\textwidth]{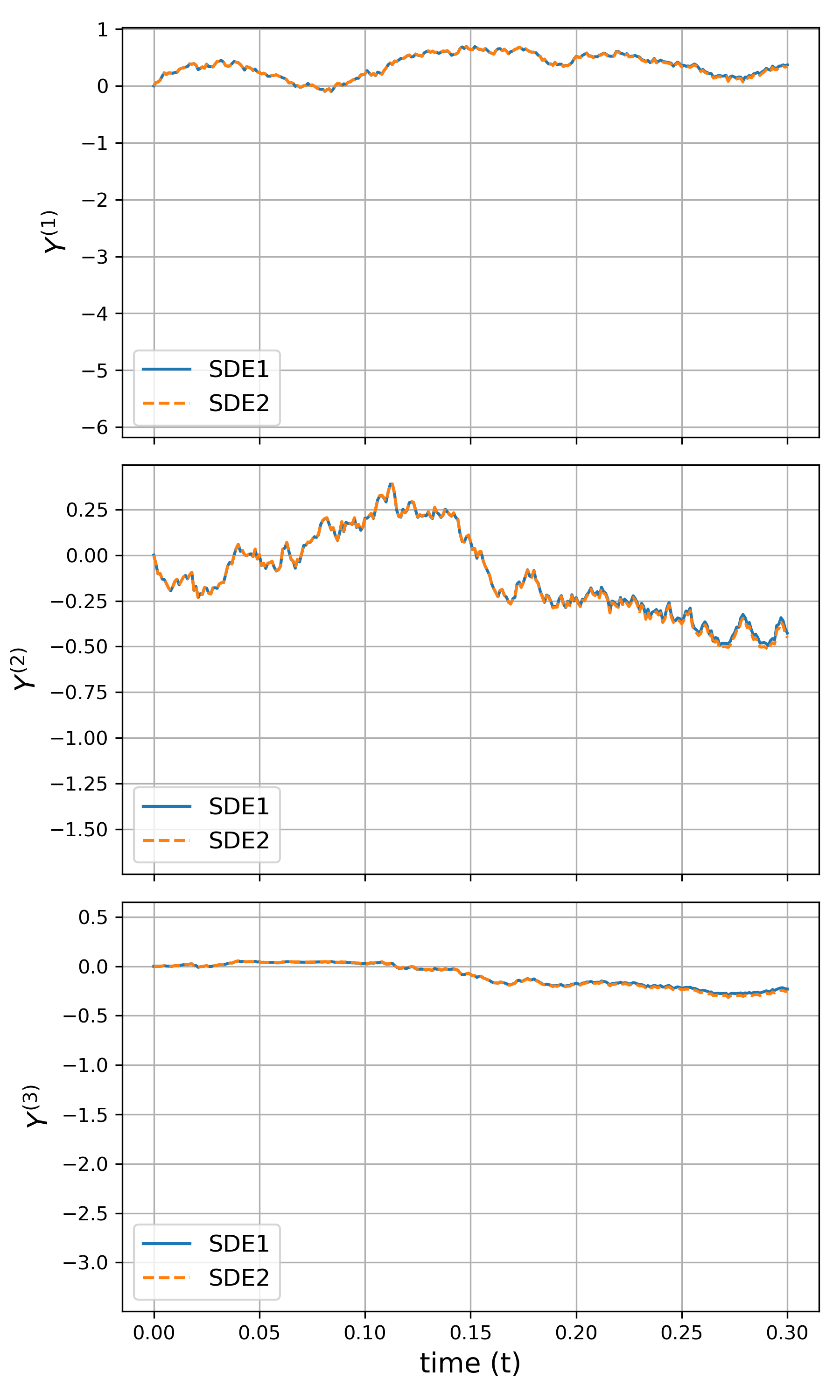}
    \caption{Trajectories observed until $T=0.3$}
    \label{fig:non-identifiability-01}
  \end{subfigure}
  ~
  \begin{subfigure}[b]{0.48\textwidth}
    \centering
    \includegraphics[width=0.7\textwidth]{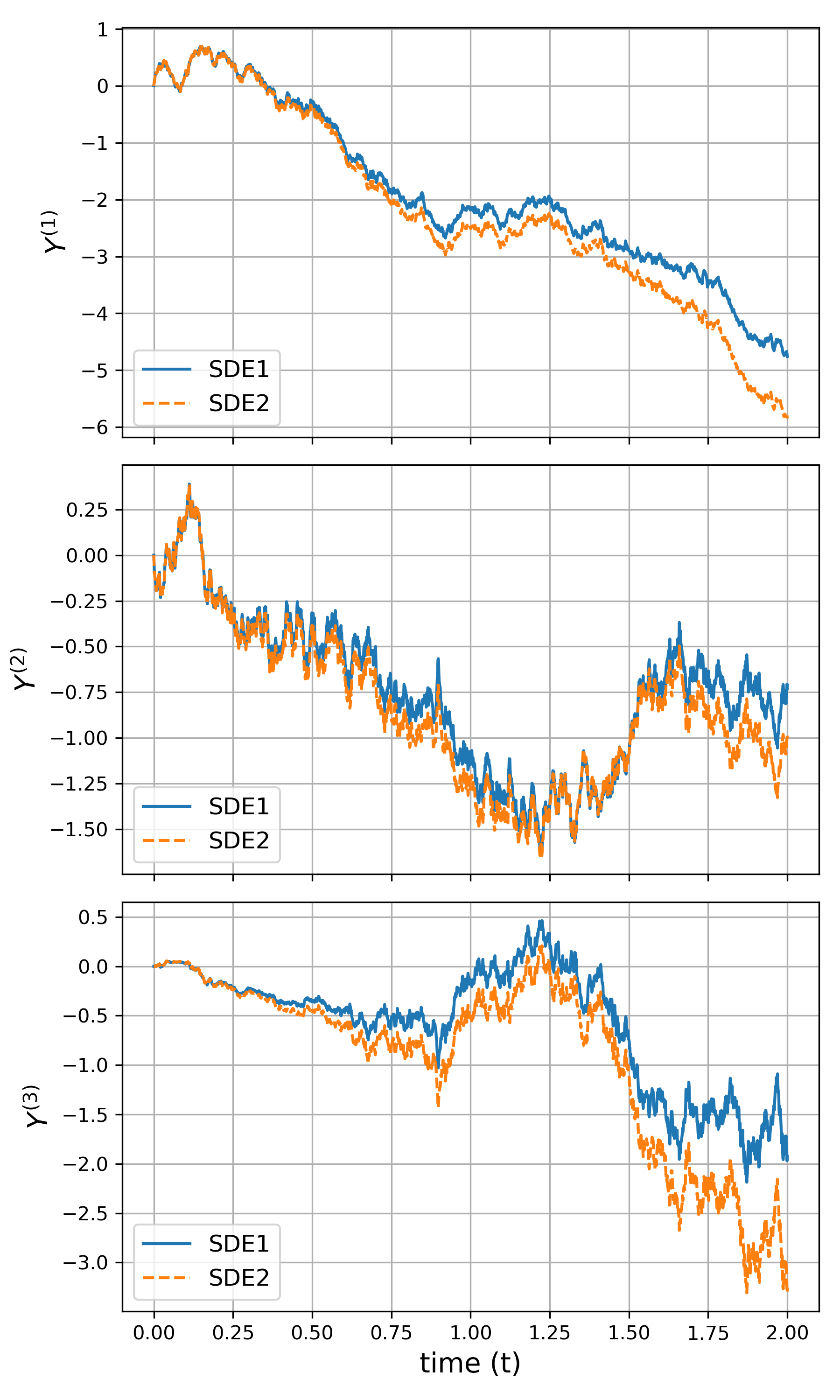}
    \caption{Trajectories observed until $T=2.0$}
    \label{fig:non-identifiability-02}
  \end{subfigure}

  \caption{The underlying paths in the sampled solutions to the path-dependent SDEs in~\eqref{eq: non-identifiablility example 1} and \eqref{eq: non-identifiablility example 2} are plotted for each component of the paths over the interval $[0,T]$ for (a) $T=0.3$ and (b) $T=2.0$. The $L^2$ distance between the two trajectories evaluated at $T$ is (a) $0.045$ and (b) $1.743$. }
  \label{fig:non-identifiability}
\end{figure}
\end{example}

\appendix

\clearpage

\section{Notation and Conventions} \label{apx:notation}
{\footnotesize
\begin{longtable}
    {ccc}
     \toprule
    Symbol & Description & Page\\ \midrule
    & Fixed Parameters & \\ \midrule
    $m$ & dimension of solution (excluding time parameter) &  \\
    $n$ & dimension of driving noise (excluding time parameter) & \\
    $q$ & truncation level of signature for signature vector field & \pageref{pg:p,gamma,q}\\ 
    $p$ & $p$-variation of a path & \pageref{pg:p,gamma,q}\\
    $r$ & depth of Picard iteration & \\
    $N$ & number of sampled trajectories & \\
    $T$ & length of time interval $[0,T]$ & \\
    $\gamma$ & constant for $\Lip(\gamma)$ functions & \pageref{pg:p,gamma,q}\\
    $M$ & radius of restriction ball & \\ 
    $\xi$ & parameter used to fix uniform bounds on noise & \pageref{pg:xi}\\
    $d$ & dimension of the unknown component of the parameter space & \pageref{pg:d}\\
    $\cN, \cM$ & noise bounded by $\cN(\xi)$ results in solutions bounded by $\cM(\xi)$ & \pageref{prop: N(theta) and M(theta)}\\
    \midrule 
    & Sets and Spaces &\\ \midrule
    $[n], [n]_0$ & finite sets $[n] \coloneqq \{1, \ldots, n\}$ and $[n]_0 \coloneqq \{0, \ldots, n\}$ & \\
    $(\samplespace, \mathbf{\mathcal{F}}, \mathcal{P})$ & probability space & \pageref{pg:prob_space}\\
    $\cW(\cA, \ell)$ & words in a set $\cA$ of length at most $\ell$ & \pageref{pg:cW_def}\\
    $U, V$ & Banach spaces for driving signal ($U$) and solution ($V$) & \pageref{pg:U,V}\\
    $T(V), T\ps{V}$ & tensor algebra and its completion & \pageref{pg:tensor_alg}\\
    $H\ps{V}$ & Hilbert space completion of tensor algebra & \pageref{pg:hilbert_alg}\\
    $\ssol$ & shorthand for $\ssol \coloneqq T^{(\leq q)}(V)$ & \pageref{pg:ssol}\\
    $B_\varepsilon (\theta)$  &  open ball of radius $\varepsilon$ centered at $\theta$ with respect to $\|.\|_\infty$ & \\
    $\GRP_p(V)$ & geometric $p$-rough paths on $V$ & \pageref{def:grp}\\
    $\GRP_p^{\tp}(\R \times V)$ & time parametrized geometric $p$-rough paths & \pageref{rem:time_parametrization} \\ 
    \midrule 
    & Notation for RDE/SDEs & \\
    \midrule
    $S$ & path signature for a (rough) path &  \pageref{pg:sig_smooth}, \pageref{pg:sig_rough}\\
    $\p{0}_U$ & the trivial rough path $\p{0}_U \coloneqq (1, 0, \ldots 0)$ in $\GRP_p(U)$ & \pageref{def: trivial rough path}\\
    $\bone$ & multiplicative identity in $\ssol$ & \pageref{eq:multiplicative identity}\\
    $\pi_U$ & projection map $\pi_U : T\ps{U \oplus V} \to T\ps{U}$ for tensor algebras & \pageref{pg:projection}\\
    $\tens, \tens_M$ & tensor product and its modification $\tens, \tens_M: \ssol \to \LL(\sol, \ssol)$  & \pageref{pg:tens},\pageref{def:modified_tensor_mult}\\
    $K_M$ & $M$-ball in truncated tensor algebra $K_M \subset \ssol$ & \pageref{def:modified_tensor_mult} \\
    $G_M$ & subset $G_M \subset \GRP_p(U)$ of driving noise such that the $M$-solution is bounded by $M$ & \pageref{thm: universal limit theorem 2} \\
    $I_{F,M}$ & lifted $M$-solution map $I_{F,M} : \GRP_p(U) \to \GRP_p(\ssol)$ for RDE with vector field $F$ & \pageref{def:I_FM}\\
    $J_{F,M}$ &  $M$-solution map $J_{F,M} : G_M \to \GRP_p(\sol)$ for RDE with vector field $F$ & \pageref{pg:J_FM}\\
    $A_\theta, B_\theta, F_\theta$ & parametrized vector field $F_\theta : \ssol \to \LL(\drive, \sol)$ with drift $A_\theta$ and diffusion $B_\theta$ & \pageref{pg:parametrized_vf} \\
    $\p{F}_{\theta, M}$ & lifted vector field $\p{F}_{\theta, M} : \ssol\to \LL(\drive, \ssol)$ & \pageref{eq:bounded_lifted_vf}  \\ 
    \midrule 
    & Notation for Parameter Estimation &\\
    \midrule
    $\p{\theta_0} =(\known\theta, \theta_0)$ & true parameter for the vector field, decomposed into known and unknown components & \pageref{pg:decomposition of the true parameter} \\
    $\known{\Theta}, \unknown{\Theta}$ & subspaces consisting of known and unknown components of the parameter space & \pageref{pg:known_unknown}\\
    $\Theta_\xi$ & subset of parameters $\Theta_\xi \subset \unknown{\Theta}$ allowable with respect to $\xi$ & \pageref{pg:allowable}\\
    $E_\xi$ & subset of samples $E_\xi \subset \samplespace$ allowable with respect to $\xi$ & \pageref{pg:allowable}\\
    $\chi_{E_\xi}$ & indicator function on the set $E_\xi$ & \\
    $P_r(\theta), P(\theta)$ & expectation of Picard iteration/solution as functions in $\theta$ & \pageref{eq:picard_expsig_polys},\pageref{eq:P_def}\\
    $Q(r, \ell)$ & maximum degree of polynomial for depth $r$ Picard iteration & \pageref{thm: polynomial equations}\\
    $\J_f(\theta)$ & Jacobian of $f$ at $\theta$ & \\
    \bottomrule
    
\end{longtable}
}

The following are some conventions for (rough) paths.
\begin{itemize}
    \item Paths are denoted using unbold capital letters $Y: [0,T] \to \sol$.
    \item Elements/paths valued in the (truncated/completed) tensor algebra are denoted using bold capital letters; for instance a rough path $\p{Y} \in \GRP_p(V)$.
    \item Paths valued in the (truncated/completed) tensor algebra of the tensor algebra are denoted using calligraphic capital letters; for instance a rough path of signatures $\pp{Y} \in \GRP_p(\ssol)$.
\end{itemize}

\section{Proofs of Lemmas and Propositions} \label{appendix: proofs}
\begin{proof}[Proof of \Cref{lem:product_lipgamma}]
    For all $k = 0,\ldots, \gamma_0$, to obtain $h^{(k)}(u) \in \LL(U^{\otimes k}, \LL(V_1,V_3))$ for $u\in U$, we apply the general Leibniz rule \cite[p. 318]{olver1993applications} and get
     \begin{align} \label{eq: lip(gamma) 1}
        h^{(k)}(u)(e_{I}) = \sum_{i=0}^k \sum_{
        \substack{J \subset [k] \\ |J| = i}}  g^{(i)}(u)(e_{I_J}) \cdot f^{(k-i)}(u)(e_{I_{[k]\setminus J}}) \in \LL(V_1, V_3)
    \end{align}
    for basis elements $e_{I}$ of $U^{\otimes k}$, as defined in~\eqref{eq: orthonormal basis}. Therefore, for some constant $K_1>0$,
    \begin{align} \label{eq: lip(gamma) 2}
        \|h^{(k)}(u)\|\leq  K_1 (k+1) 2^k  \|g\|_{\Lip(\gamma)} \|f\|_{\Lip(\gamma)}.
    \end{align}
     Then, using \eqref{eq: lip(gamma) 1} again, for $u_1,u_2\in U$, we have
    \begin{align} 
        \left\|\left(h^{(\gamma_0)}(u_1) - h^{(\gamma_0)}(u_2)\right)(e_I)\right\| &= \Bigg\|\sum_{i=0}^{\gamma_0} \sum_{\substack{J \subset [\gamma_0 ], \  |J|=i}} \bigg(g^{(i)}(u_1) (e_{I_J})\cdot f^{(\gamma_0-i)}(u_1)(e_{I_{[\gamma_0 ]\setminus J}}) \\ &
        \qquad -g^{(i)}(u_2)(e_{I_J}) \cdot f^{(\gamma_0-i)}(u_2) (e_{I_{[\gamma_0]\setminus J}}) \bigg)\Bigg\| \\
        &\le 2^{\gamma_0}\sum_{i=0}^{\gamma_0} \left\|g^{(i)}(u_1)-g^{(i)}(u_2)\right\|\left\|f^{(\gamma_0-i)}(u_1)\right\|  \\ & \qquad +2^{\gamma_0}\sum_{i=0}^{\gamma_0}\left\|f^{(\gamma_0-i)}(u_1)-f^{(\gamma_0-i)}(u_2)\right\|\left\|g^{(i)}(u_2)\right\|   \\ 
        &\le 2\cdot 2^{\gamma_0} \|f\|_{\Lip(\gamma)} \|g\|_{\Lip(\gamma)}\left(\gamma_02^{1 - (\gamma - \gamma_0)} +1\right)\|u_1 - u_2\|^{\gamma - \gamma_0},
    \end{align}
    where in the last line we use the fact that for all $i = 0,\ldots, \gamma_0-1$,
    \begin{align}
        \|f^{(i)}(u_1) - f^{(i)}(u_2)\| &= \|f^{(i)}(u_1) - f^{(i)}(u_2)\|^{1-(\gamma - \gamma_0)} \cdot \|f^{(i)}(u_1) - f^{(i)}(u_2)\|^{\gamma - \gamma_0} \\
        & \leq \left(2\|f\|_{\Lip(\gamma)}\right)^{1-(\gamma - \gamma_0)} \left(\|f\|_{\Lip(\gamma)} \|u_1 - u_2\|\right)^{\gamma - \gamma_0}. 
    \end{align}
   Therefore, for some constant $K_2>0$, we have 
    \begin{align} \label{eq: lip(gamma) 3}
        \left\|h^{(\gamma_0)}(u_1) - h^{(\gamma_0)}(u_2)\right\| \leq K_2 \|f\|_{\Lip(\gamma)} \|g\|_{\Lip(\gamma)}\|u_1 - u_2\|^{\gamma - \gamma_0}.
    \end{align}
    Equations~\eqref{eq: lip(gamma) 2} and \eqref{eq: lip(gamma) 3} conclude the proof. 
\end{proof}

\begin{proof}[Proof of \Cref{prop: signatures of the lift}]
First assume $X$ is 
    a bounded variation path, i.e. $p=1$. Let $\pp{Z}\in \GRP_1(\drive \oplus \ssol)$ be as in \Cref{def:M_solution}. By definition of the coupled $M$-solution, for all $(s,t) \in \Delta_T$, we have
    \begin{align} \label{eq: level 1 signature}
        \pp{Z}_{s,t}^1 = \int_s^t h_M(\pp{Z}_{0,u}^1) \ d\pp{Z}_{0,u}^1.
    \end{align}
    Therefore, 
    \begin{align}
        \pp{Y}_{s,t}^{(\emptyset)} =  \pp{Z}_{s,t}^{(\emptyset)} =  \int_{s}^t 0 = 0.
    \end{align}
    Now we prove the first equality in \eqref{eq: signatures of the lift} by induction on $k$. If $k=1$, this equality trivially holds. Assume it is also true for all $k\in [\ell]$ for some $1\leq \ell < q$. We will prove it is then true when $k=\ell+1$. 
    For $\pp{Y} = \pi_{\ssol}(\pp{Z}) \in \GRP_1(\ssol)$, we have
    \begin{align}
        \pp{Y}_{s,t}^{\left((i_1,\ldots,i_{\ell+1})\right)}
        &= \pp{Z}_{s,t}^{\left((i_1,\ldots,i_{\ell+1})\right)} &&\text{(Since $\pi_{\ssol}(\pp{Z}) = \pp{Y}$)}\\
        &= \int_s^t \pp{Z}_{0,u}^{\left((i_1,\ldots,i_{\ell})\right)} \ d\pp{Z}_{0,u}^{\left((i_{\ell+1})\right)} && \text{(\eqref{eq: level 1 signature}, and $\|\pi_{\tilde{V}}(\pp{Z})_{0,u}^1 + \bone \| < M \ \ \forall u \in [s,t]$)}\\
        &= \int_s^t \pp{Y}_{0,u}^{\left((i_1,\ldots,i_{\ell})\right)} \ d\pp{Y}_{0,u}^{\left((i_{\ell+1})\right)} && \text{(Since $\pi_{\ssol}(\pp{Z}) = \pp{Y}$)}\\
        &= \int_s^t \pp{Y}_{0,u}^{((i_1),\ldots,(i_{\ell}))} \ d\pp{Y}_{0,u}^{((i_{\ell+1}))} &&\text{(By the induction hypothesis)}\\
        &= \pp{Y}_{s,t}^{((i_1),\ldots,(i_{\ell+1}))} && \text{(Since $\pp{Y}$ is a bounded variation path)}.
    \end{align}
    This concludes the proof of the first equality in \eqref{eq: signatures of the lift} in the bounded 1-variation case.
    
    Now let $\p{X}\in \GRP_p(\drive)$. Then there exists a sequence $\{\p{X}(n)\}_{n\in\mathbb{N}} \subset \GRP_p(\drive)$ of extensions of bounded variation paths such that $d_p(\p{X}(n), \p{X}) \to 0$ as $n\to \infty$. Therefore, we get that $d_p(\pp{Y}(n), \pp{Y}) \to 0$ as $n\to \infty$, where $\pp{Y}(n) \coloneqq I_{F,M}(\p{X}(n))$. Since for all $t\in [0,T]$, we have $\|\p{Y}_{0,t}\| =\|\pp{Y}^1_{0,t}+ \bone\| <M$,  there exists $N>0$ such that for all $n\geq N$, we have $ \|\pp{Y}(n)_{0,t}^1 + \bone\| < M$ for all $t\in [0,T]$. Hence, for all $n\geq N$,
    \begin{align}
    \pp{Y}(n)_{s,t}^{(\emptyset)} = 0, \qquad
    \pp{Y}(n)_{s,t}^{\left((i_1),\ldots,(i_k) \right)} = \pp{Y}(n)_{s,t}^{\left((i_1,\ldots,i_k)\right)}.
    \end{align}
    Taking the limit as $n\to \infty$ proves
    \begin{align} \label{eq: the solution is a projection}
    \p{Y}_{s,t}^{\emptyset} = 1 + \pp{Y}_{s,t}^{(\emptyset)} = 1, \qquad
    \p{Y}_{s,t}^{(i_1,\ldots,i_k)}= \pp{Y}_{s,t}^{\left((i_1,\ldots,i_k)\right)} = \pp{Y}_{s,t}^{\left((i_1),\ldots,(i_k) \right)} 
    \end{align}
    for arbitrary $\p{X}\in \GRP_p(V)$, where we use the continuity of the extension in~\cite[Theorem 3.10]{lyons_differential_2007} when $\lfloor p \rfloor < k \leq q$. Equation \eqref{eq: the solution is a projection} proves that $\p{Y} = \pi_V(\pp{Y})$ as desired.
\end{proof}

\begin{proof}[Proof of \Cref{prop: N(theta) and M(theta)}]
Our first step is to define a control $\omega$ for each $\p{X} \in \GRP_p(\drive)$ which we will later bound. 
Recall that for $s <t$, we define $\Delta_{s,t} \coloneqq \{(u_1,u_2) : s\leq u_1 < u_2 \leq t\}$. 
We claim that
\begin{align}
    \omega : \Delta_{T} \to \mathbb{R}\quad \text{defined by} \quad \omega(s,t) = C\  d_p(\p{X}|_{\Delta_{s,t}},\mathbf{0})^p
\end{align}
is a control for the $p$-variation of $\p{X}\in \GRP_p(\drive)$. Indeed, let $0\leq s\leq u \leq t \leq T$. Then for any $i \in [\lfloor p \rfloor]$, any partition $s = a_0 < a_1 <\cdots < a_\ell = u$, and any partition $u = a_{\ell} < a_{\ell+1} < \cdots < a_{\ell+\ell'}=t$, we have 
\begin{align}
    \sum_{k=1}^\ell  \left\| \p{X}_{a_{k-1},a_k}^i \right\|^\frac{p}{i}+ \sum_{k=1}^{\ell'}  \left\| \p{X}_{a_{\ell+k-1},a_{\ell+k}}^i \right\|^\frac{p}{i} \leq \max_{1\leq i \leq \lfloor p \rfloor} \sup_{\substack{s = b_0 < b_1 < \cdots < b_{\ell''} = t,\\ \ell'' \in \mathbb{N}}} \sum_{k=1}^{\ell''}  \left\| \p{X}_{b_{k-1},b_k}^i \right\|^\frac{p}{i}  = d_p(\p{X}|_{\Delta_{s,t}},\mathbf{0})^p.
\end{align}
Therefore, $\omega$ is super-additive, i.e.
\begin{align}
    d_p(\p{X}|_{\Delta_{s,u}},\mathbf{0})^p + d_p(\p{X}|_{\Delta_{u,t}},\mathbf{0})^p \leq d_p(\p{X}|_{\Delta_{s,t}},\mathbf{0})^p.
\end{align}
Moreover, for all $(s,t) \in \Delta_T$, we have
\begin{align}
    \left\| \p{X}_{s,t}^i \right\| \leq \left(d_p(\p{X}|_{\Delta_{s,t}},\mathbf{0})^p\right)^{\frac{i}{p}} \leq \frac{\omega(s,t)^{\frac{i}{p}}}{\beta \left( \frac{i}{p}\right) !}.
\end{align}
This concludes the proof that $\omega$ is a control for the $p$-variation of $\p{X}$. \medskip

Next, our aim is to determine the conditions under which the bounds on the Picard iterations in part (4) of~\Cref{thm: universal limit theorem 2} hold for the entire interval $[0,T]$.
In order to do so, we must adapt intermediate steps of the proof of the Universal Limit Theorem in~\cite[Theorem 5.3]{lyons_differential_2007}. In particular, in~\cite[Section 5.5]{lyons_differential_2007}, we must consider for $i = 0,1,2$ three paths $\p{Z}_i \in \GRP_p(B_i)$ on Banach spaces $B_i$ and associated vector fields $H_{\theta, M}^i: B_i \to \LL(B_i, B_i)$. In our setting, the vector fields are determined by $h_{\theta, M}$ from~\eqref{eq:h_m_def}, where the $\theta$ denotes the additional dependence on $\theta$ of our parametrized vector fields in~\eqref{eq:bounded_lifted_vf}. In order to obtain $T_\rho$ in part (4) of~\Cref{thm: universal limit theorem 2}, one must consider the $p$-variation of
\begin{align}
    \int H^i_{\theta, M}(\p{Z}_i) d\p{Z}_i.
\end{align}
By \cite[Theorem 4.12]{lyons_differential_2007}, there exists a function $K: \mathbb{R}_{\geq 0} \times \mathbb{R}_{\geq 0} \times \mathbb{R}_{\geq 0} \to \mathbb{R}_{\geq 0}$ such that for $i=0,1,2$ and all $\p{Z}_i\in \GRP_p(B_i)$ with $p$-variation controlled by some control $\widehat\omega$ and $\widehat\omega(0,T)\leq 1$, the $p$-variation of $\int H_i^M(\p{Z}_i) d\p{Z}_i$ is controlled by 
\begin{align}
K\left(\left\|H^0_{\theta, M}\right\|_{\Lip},\left\|H^1_{\theta, M}\right\|_{\Lip},\left\|H^2_{\theta, M}\right\|_{\Lip} \right)\widehat\omega.
\end{align}
By the proof of \cite[Theorem 4.12]{lyons_differential_2007}, $K$ can be considered to be continuous and non-decreasing in all of the variables. 
Note that there exists a function $I: \mathbb{R}_{\geq 0} \times \mathbb{R}_{\geq 0} \to \mathbb{R}_{\geq 0}$ such that $I$ is continuous and non-decreasing in both variables, and   
\begin{align}
    \|h_{\theta, M}\|_{\Lip} \leq I(M,\|\theta\|).
\end{align}
Then, since the norms of the vector fields $H^i_{\theta, M}$ are bounded by continuous and non-decreasing functions of $\|h_{\theta, M}\|_{\Lip}$, we can define a continuous and non-decreasing (in both variables) function $J: \mathbb{R}_{\geq 0} \times \mathbb{R}_{\geq 0} \to \mathbb{R}_{\geq 0}$ such that
\begin{align}
    \max \left\{\left\|H_{\theta,M}^0\right\|_{\Lip}, \left\|H_{\theta,M}^1\right\|_{\Lip}, \left\|H_{\theta,M}^2\right\|_{\Lip}\right\} \leq J(M,\|\theta\|).
\end{align}
Therefore, for $i = 0,1,2$, the $p$-variation of $\int H_{M,\theta}^i(\p{Z}) d\p{Z}$ is also controlled by 
\begin{align}
    \widehat{K} (M, \|\theta\|) \widehat\omega \quad \text{where} \quad \widehat{K}(M, \|\theta\|) \coloneqq K \left( J(M, \|\theta\|), J(M, \|\theta\|),J(M, \|\theta\|) \right).
\end{align}
Now by the proof of the Universal Limit Theorem in~\cite[Page 89]{lyons_differential_2007}, if
\begin{align} \label{eq:full_interval_condition}
    \omega(0,T) \leq \frac{1}{\widehat{K}(M,\|\theta\|)^{\lfloor p \rfloor}},
\end{align}
then part (4) of~\Cref{thm:universal_limit} holds for $T_\rho = T$. In our setting, where we consider the level 1 component as in part (4) of~\Cref{thm: universal limit theorem 2}, we have in particular for all $(s,t)\in \Delta_T$ and all $r\in \mathbb{Z}_{\geq 0}$,
\begin{align} \label{eq:full_interval_picard1}
    \left\| \p{Y}_{\theta,M}(r)_{s,t} - \p{Y}_{\theta,M}(r+1)_{s,t} \right\| \leq 2 \rho^{-r} \frac{\omega(0,T)^{\frac{1}{p}}}{\beta \left(\frac{1}{p}\right)!} \leq 2 \rho^{-r} \frac{C^{1/p}  d_p(\p{X},\mathbf{0})}{\beta \left(\frac{1}{p}\right)!}.
\end{align}
Then, this implies that
\begin{align} \label{eq:full_interval_picard2}
    \left\|\p{Y}_{\theta,M}(r)_{s,t}\right\| \leq \left\|\p{Y}_{\theta,M}(0)_{s,t}\right\| + 2 \frac{C^{1/p}  d_p(\p{X},\mathbf{0})}{\beta \left(\frac{1}{p}\right)!} \sum_{i=0}^{r-1} \rho^{-i} < 1 + 2 \frac{C^{1/p}  d_p(\p{X},\mathbf{0})}{(1-\rho^{-1})\beta \left(\frac{1}{p}\right)!}.
\end{align}
Now by restricting to sufficiently bounded $\p{X}$, we wish to obtain uniform bounds on the values $\left\|\p{Y}_{\theta,M}(r)_{s,t}\right\|$ which are valid for all $(s,t)\in \Delta_T$. To do so, we define the function 
\begin{align}
    \cF : (N, \|\theta\|) \mapsto C N^p \widehat{K}\left(1 +\frac{ 4C^\frac{1}{p}}{\left(1-\rho^{-1}\right) \beta \left(\frac{1}{p}\right)!} N, \|\theta\|\right)^{\lfloor p \rfloor}.
\end{align}
For all $\theta \in \Theta$, since $\cF(0, \|\theta\|) = 0$, there exists $N>0$ with  $\cF(N,\|\theta\|)\leq 1$ by continuity. Fix $N_0>0$ and set
\begin{align}
    \cN(\mu) \coloneqq \sup \{N \in (0,N_0] : \cF(N, \mu ) \leq 1\} \andd
    \cM(\mu) \coloneqq 1 + \frac{4C^\frac{1}{p}}{\left(1-\rho^{-1}\right) \beta \left(\frac{1}{p}\right)!} \cN(\mu).
\end{align}
Now, consider $\p{X}$ such that $d_p(\p{X}, \mathbf{0})\leq \cN(\|\theta\|)$. Then 
\begin{align} \label{eq: T_rho = T}
    \omega(0,T) \ \widehat{K} (\cM(\|\theta\|), \|\theta\|)^{\lfloor p \rfloor} \leq \cF(\cN(\|\theta\|), \|\theta\|) \leq 1.
\end{align}
In particular, the condition in~\eqref{eq:full_interval_condition} holds. 
Then in this setting, by~\eqref{eq:full_interval_picard2}, for all $(s,t)\in \Delta_T$,
\begin{align}
    \left\|\p{Y}_{\theta, \cM(\|\theta\|)}(r)_{s,t}\right\| \leq 1 + 2 \frac{C^{1/p}  d_p(\p{X},\mathbf{0})}{(1-\rho^{-1})\beta \left(\frac{1}{p}\right)!} < \cM(\|\theta\|).
\end{align}
Furthermore, by part (3) of \Cref{thm: universal limit theorem 2}, this implies that
\begin{align}
    \left\|\left(\p{Y}_{\theta, \cM(\|\theta\|)}\right)_{s,t}\right\| \leq 1 + 2 \frac{C^{1/p}  d_p(\p{X},\mathbf{0})}{(1-\rho^{-1})\beta \left(\frac{1}{p}\right)!}  < \cM(\|\theta\|). 
\end{align}

To finish the proof, it only remains to show that $\cN$ and $\cM$ are non-increasing. 
Let $\mu_1,\mu_2\in 
\R_+$ with $\mu_1 \leq \mu_2$. Then $\cF(\cN(\mu_2),\mu_1) \leq \cF(\cN(\mu_2), \mu_2) \leq 1$, which means that $\cN(\mu_1) \geq \cN(\mu_2)$, and thus, $\cM(\mu_1)\geq \cM(\mu_2)$ as desired.
\end{proof}

\begin{proof}[Proof of \Cref{cor: uniform convergence}]
By \Cref{prop: N(theta) and M(theta)}, for all $\theta\in 
    \unknown\Theta$ and $\sample \in \samplespace$, if $d_p(\p{X}(\sample),\mathbf{0})\leq \cN(\|\p\theta\|)$, we have 
    \begin{align}
    \left\| \p{Y}_{\p{\theta}}(r)(\sample)^{I}_{0,T} - \p{Y}_{\p{\theta}}(r+1)(\sample)^{I}_{0,T} \right\| \leq 2 \rho^{-r} \frac{C^{\frac{1}{p}} d_p(\p{X}(\sample), \mathbf{0})}{\beta\left(\frac{1}{p}\right)!} \ 
            \text{ for all $r\in \mathbb{Z}_{\geq 0}$},
    \end{align}
    where constants $\rho$ and $C$ are as in \eqref{eq: C and rho}. So, by \Cref{thm: universal limit theorem 2},
    \begin{align}
    \left\| \p{Y}_{\p{\theta}}(r)(\sample)^{I}_{0,T} - \p{Y}_{\p{\theta}}(\sample)^{I}_{0,T} \right\| \leq  \frac{2 \rho^{-r}}{(1-\rho^{-1})\beta\left(\frac{1}{p}\right)!} C^{\frac{1}{p}} d_p(\p{X}(\sample), \mathbf{0}) \ 
            \text{ for all $r\in \mathbb{Z}_{\geq 0}$}.
    \end{align}
    For all $\theta \in \Theta_\xi$ and $\sample \in E_{\xi}$, we have $d_p(\p{X}(\sample), \bzero) < \xi \leq \cN(\|\p{\theta}\|)$. Hence, for all $\theta\in \Theta_\xi$ and $\sample \in \samplespace$, 
    \begin{align} \label{eq:pathwise_uniform_convergence_bound}
    \left\| \p{Y}_{\p{\theta}}(r)(\sample)^{I}_{0,T} \cdot \chi_{E_\xi}(\sample) - \p{Y}_{\p{\theta}}(\sample)^{I}_{0,T} \cdot \chi_{E_\xi}(\sample)\right\| \leq  \frac{2 \rho^{-r}}{(1-\rho^{-1})\beta\left(\frac{1}{p}\right)!} C^{\frac{1}{p}} \xi \ 
            \text{ for all $r\in \mathbb{Z}_{\geq 0}$}.
    \end{align}    
    Now taking expectations, we get that for all $\theta \in \Theta_\xi$ and $r\in \mathbb{Z}_{\geq 0}$,
    \begin{align}\label{eq: rate of uniform convergence}
        \left| P_{r}^I(\theta) - P^I(\theta) \right| \leq \mathbb{E} \left\| \p{Y}_{\p{\theta}}(r)^{I}_{0,T} \cdot \chi_{E_\xi} - \left(\p{Y}_{\p{\theta}}\right)^{I}_{0,T} \cdot \chi_{E_\xi}\right\| \leq  \frac{2 \rho^{-r}}{(1-\rho^{-1})\beta\left(\frac{1}{p}\right)!} C^{\frac{1}{p}} \xi.
    \end{align}
    Thus $\{P^I_r\}_{r\in \mathbb{Z}_{\geq 0}}$ converges uniformly  on $\Theta_\xi$ to $P^I$ as $r \to \infty$. But by \Cref{thm: polynomial equations}, the functions $P^I_{r}$ are polynomials, and therefore, continuous on $\Theta_\xi$. So $P^{I}$ is continuous as well.
\end{proof}

\begin{proof}[Proof of \Cref{lem:solution_given_miranda_domain}]
Without loss of generality, we can assume for all $k\in [d]$, 
    \begin{align}
        P^{I_k}(\theta_1) - P^{I_k}(\theta_0) >0 \text{ and } P^{I_k}(\theta_2) - P^{I_k}(\theta_0) <0 \text{ for all } \theta_1 \in D_k^+ \text{ and } \theta_2 \in D_k^-.
    \end{align}
    Note that all sets in $\mathcal{D}$ are compact by definition. For all $k\in [d]$, let 
    \begin{align}\label{eq: def of epsilon_k}
        \eta_k \coloneqq \min \left\{\min_{\theta_1 \in D_k^+} \frac{P^{I_k}(\theta_1) - P^{I_k}(\theta_0)}{2}, \min_{\theta_2\in D_k^-} \frac{P^{I_k}(\theta_0) - P^{I_k}(\theta_2)}{2} \right\} > 0.
    \end{align}
    By \Cref{cor: uniform convergence}, there exists $r_0 \in \mathbb{N}$ such that for all $r\geq r_0$ and all $k\in [d]$, 
    \[
        \sup_{\theta\in \Theta_\xi} \left|P_{r}^{I_k}(\theta) - P^{I_k}(\theta) \right| < \eta_k.
    \]
So, for all $\theta_1 \in D_k^+$ and all $\theta_2\in D_k^-$, we have 
\begin{align}
    &P^{I_k}_{r}(\theta_1)  - P^{I_k}(\theta_0) > P^{I_k} (\theta_1) - P^{I_k}(\theta_0) - \eta_k \geq \min_{\theta_1 \in D_k^+} \left(P^{I_k}(\theta_1) - P^{I_k}(\theta_0)\right) - \eta_k \geq \eta_k,\\
    &P^{I_k}(\theta_0)  - P^{I_k}_{r}(\theta_2) > P^{I_k} (\theta_0) - P^{I_k}(\theta_2) - \eta_k   \geq \min_{\theta_2 \in D_k^-} \left(P^{I_k}(\theta_0) - P^{I_k}(\theta_2)\right) - \eta_k \geq \eta_k.
\end{align}
On the other hand, by the Strong Law of Large Numbers, almost surely, there exists $N_0\in \mathbb{N}$ such that for all $N\geq N_0$ and all $k\in [d]$, we have
\begin{align}
    \left|\frac{1}{N}\sum_{i=1}^N \p{Y}_{\p{\theta_0}}(\sample_i)_{0,T}^{I_k} \cdot \chi_{E_\xi}(\sample_i) - P^{I_k}(\theta_0) \right| < \eta_k.
\end{align}
So, for all $r\geq r_0$, all $N\geq N_0$, all $k\in [d]$, all $\theta_1 \in D_k^+$, and all $\theta_2 \in D_k^-$, 
\begin{align}
    P_{r}^{I_k}(\theta_2) < P^{I_k}(\theta_0) - \eta_k < \frac{1}{N}\sum_{i=1}^N \p{Y}_{\p{\theta_0}}(\sample_i)_{0,T}^{I_k} \cdot \chi_{E_\xi}(\sample_i) < P^{I_k}(\theta_0) + \eta_k < P_{r}^{I_k}(\theta_1).
\end{align}
This means the continuous mapping $P_{r,N}: D \to \mathbb{R}^d$ defined by
\begin{align}
    P_{r,N}(\theta) \coloneqq \left(P^{I_1}_{r}(\theta) - \frac{1}{N}\sum_{i=1}^N \p{Y}_{\p{\theta_0}}(\sample_i)_{0,T}^{I_1} \cdot \chi_{E_\xi}(\sample_i), \ldots, P^{I_d}_{r}(\theta) - \frac{1}{N}\sum_{i=1}^N \p{Y}_{\p{\theta_0}}(\sample_i)_{0,T}^{I_d} \cdot \chi_{E_\xi}(\sample_i) \right)
\end{align}
satisfies the Miranda conditions on $(D, \mathcal{D})$. Therefore, by~\cite[Theorem 2.7]{miranda2002}, there exists $\theta_{r,N} \in D $ such that $P_{r,N}(\theta_{r,N}) = 0$, i.e. the system \eqref{eq: expected signature matching method} has a solution in $D$.
\end{proof}

\begin{proof}[Proof of \Cref{lem:existence_of_miranda}]
    This proof closely follows the idea of the proof of \cite[Theorem 3.1]{miranda2002}. Define 
    \begin{align}
        g : \mathbb{R}^d \to \mathbb{R}^d, \quad \theta \mapsto \left(\J_{P}(\theta_0)\right)^{-1} \theta + \theta_0.
    \end{align}
    Let $\tTheta_\xi \coloneqq g^{-1}(\Theta_\xi)$, and define
    \begin{align}
        h = (h^1, \ldots, h^d): \tTheta_\xi \xrightarrow{g} \Theta_\xi \xrightarrow{P} \R^d.
    \end{align}
    Moreover, define 
    \begin{align}
        R = (R^1, \ldots, R^d): \tTheta_\xi \to \mathbb{R}^d, \quad R(\theta) = h(\theta) - P(\theta_0) - \theta
    \end{align}
    Note that $g$ is differentiable, $g(0) = \theta_0$, and $P$ is differentiable at $\theta_0$. Thus, by the chain rule, $h$ is differentiable at $0$ and we get
    \begin{align}
        \J_{h}(0) = \J_{P}(g(0)) \J_g(0) = \J_{P}(\theta_0) \left(\J_{P}(\theta_0)\right)^{-1} = {\rm Id}_d,
    \end{align}
    where ${\rm Id}_d$ denotes the $d\times d$ identity matrix. This means that we have
    \begin{align}
     \frac{R(\theta)}{\|\theta\|_\infty} = \frac{h(\theta)-h(0) - \J_h(0) (\theta - 0)}{\|\theta\|_\infty} \andd \lim_{\theta \to 0} \frac{R(\theta)}{\|\theta\|_\infty} = 0
    \end{align}
    since $h$ is differentiable at $0$. Thus, there exists $\varepsilon_0>0$ such that $B_{\varepsilon_0}(\theta_0)\subset \Theta_\xi$, and for all $\theta = (\theta^1, \ldots, \theta^d)\in \tTheta_\xi$ with $\|\theta\|_\infty \leq \frac{\varepsilon_0}{2\left\| \left(\J_{P}(\theta_0)\right)^{-1} \right\|_\infty}$, we have $-\frac12 < \frac{R_k(\theta)}{\|\theta\|_\infty} < \frac12$ for all $k\in [d]$. This implies that
    \begin{align}
        -\frac12 \|\theta\|_\infty + \theta^k < h^k(\theta) - P^{I_k}(\theta_0) < \frac12 \|\theta\|_\infty + \theta^k \ \text{ for all } k\in [d].
    \end{align}
    Now let $\varepsilon \in (0, \varepsilon_0]$. Set 
    \begin{align} \label{eq:delta}
    \delta \coloneqq \frac{\varepsilon}{2\left\| \left(\J_{P}(\theta_0)\right)^{-1} \right\|_\infty}.  
    \end{align}
    Moreover, set $D \coloneqq g(\cube(\delta))$, $D_k^+ \coloneqq g(\cube_k(\delta)^+)$, and $D_k^- \coloneqq g(\cube_k(\delta)^-)$ for all $k\in [d]$. Then $D$ is a Miranda domain and $\mathcal{D} \coloneqq \{D_1^\pm, \ldots, D_d^\pm\}$ is a Miranda partition of $\partial D$. Note that for all $\theta\in \cube(\delta)$, we have
    \begin{align}
        \left\|g(\theta) - \theta_0 \right\|_\infty  = \left\| \left(\J_{P}(\theta_0)\right)^{-1} \theta\right\|_\infty \leq \left\| \left(\J_{P}(\theta_0)\right)^{-1} \right\|_\infty \delta < \varepsilon.
    \end{align}
    So, $D\subset B_\varepsilon (\theta_0)$. On the other hand, for all $k\in [d]$ and all $\theta_1\in D_k^+$, we have $\theta_1 = g(\theta_1')$ for some $\theta_1'\in \cube_k(\delta)^+$. So, 
    \begin{align} \label{eq: refined miranda 1}
        P^{I_k}(\theta_1) - P^{I_k}(\theta_0) = h^k(\theta_1') - P^{I_k}(\theta_0) > -\frac12 \|\theta_1'\|_\infty + (\theta_1')^k = -\frac{1}{2}\delta + \delta = \frac{1}{2} \delta. 
    \end{align}
    Similarly, for all $\theta_2\in D_k^-$, we have $\theta_2 = g(\theta_2')$ for some $\theta_2'\in I_k(\delta)^-$. So, 
    \begin{align} \label{eq: refined miranda 2}
        P^{I_k}(\theta_0) - P^{I_k}(\theta_2) = P^{I_k} (\theta_0) - h^k(\theta_2') > - \frac12 \|\theta_2'\|_\infty - (\theta_2')^k = -\frac12 \delta + \delta = \frac12 \delta.
    \end{align}
    This concludes the proof.
\end{proof}

\begin{proof}[Proof of \Cref{prop:P_diff_ae}]
Let $\hat\theta \in \Theta_{\xi}$. Consider the open neighborhood $U \coloneqq \{\theta \in \Theta_\xi: 2\|\hat\theta\| > \|\theta\| > \frac12 \|\hat\theta\| \}$ of $\hat\theta$.
     Recall that $P(\theta) = (P^{I_1}(\theta), \ldots, P^{I_d}(\theta))$, where $P^I(\theta) = \E[(\p{Y}_{\p{\theta}})_{0,T}^I \cdot \chi_{E_\xi}]$. Our aim is to show that $P^I$ is path-wise Lipschitz on $U$ with respect to $\theta$ by applying the locally Lipschitz property of RDE solutions in~\cite[Theorem 10.26]{friz_multidimensional_2010}. In particular, consider a driving signal $\bX$ in $E_\xi$, which is controlled by $\omega$. By the proof of \Cref{prop: N(theta) and M(theta)}, we can assume 
     \begin{align}
        \omega(0,T) = C d_p(\p{X}, \p{0})^p < C \xi^p. 
     \end{align} 
     Let $\theta_1,\theta_2\in U$. Then for $i=1,2$, we have $\cM(\|\theta_i\|) \leq \cM(\frac12 \|\hat\theta\|) = : M$, since by \Cref{prop: N(theta) and M(theta)}, the function $\cM$ is non-increasing. 
     Therefore, $\p{Y}_{\p{\theta_1}}$ and  $\p{Y}_{\p{\theta_2}}$ are the underlying paths in the solutions to the path-independent RDEs
     \begin{align}
         d\p{Y}_{\p{\theta_i}, t} = \p{F}_{\p{\theta_i},M} (\p{Y}_{
         \p{\theta_i},t
         }) d\p{X}_t \quad \p{Y}_{\p{\theta_i}, 0} = \bone,
     \end{align}
     for $i=1,2$ respectively, where
     \begin{align}
         \p{F}_{\p{{\theta_i}}, M}(\p{s}) = \tens_{M}(\p{s}) \circ F_{\p{\theta_i}, M}(\p{s}) \quad \text{for all $\p{s}\in \ssol$}.
     \end{align}
     The vector field $\p{F}_{\p{\theta_i},M}$ is $\Lip(\gamma)$ for any $\gamma>0$. So, without loss of generality, assume $\gamma>\max\{p,3\}$. Suppose $v ({\theta_1}, {\theta_2})> \|\p{F}_{\p{\theta_1}, M}\|_{\Lip(\gamma)}, \|\p{F}_{\p{\theta_2}, M}\|_{\Lip(\gamma)}$. Then, by~\cite[Theorem 10.26]{friz_multidimensional_2010}, we~have
    \begin{align} 
        \left\|(\p{Y}_{\p{\theta_1}})_{0,T} - (\p{Y}_\p{{\theta_2}})_{0,T} \right\| &\leq C' \|\p{F}_{\p{\theta_1}, M} - \p{F}_{\p{\theta_2}, M}\|_{\Lip(\gamma-1)} \omega(0,T)^{1/p} \exp(C' v ({\theta_1}, {\theta_2})^p \omega(0,T)) \\
        &\leq C' \|\p{F}_{\p{\theta_1}, M} - \p{F}_{\p{\theta_2}, M}\|_{\Lip(\gamma-1)} C^\frac{1}{p} \xi \exp(C' v ({\theta_1}, {\theta_2})^p C \xi^p),
    \label{eq:rde_locally_lipschitz}
    \end{align}
    where $C'> 0$ is a constant which depends only on $p$ and $\gamma$. Thus, it remains to show that $\|\p{F}_{\p{\theta}, M}\|_{\Lip(\gamma)}$ can be uniformly bounded over $\theta\in U$ and $\|\p{F}_{\p{\theta_1}, M} - \p{F}_{\p{\theta_2}, M}\|_{\Lip(\gamma-1)}$ can be bounded by a multiple of $\|\theta_1 - \theta_2\|$ on $U$. \medskip

    For all $\p{s}\in \ssol$, we have
    \begin{align} 
        \p{F}_{\p{\theta_1},M} (\p{s})- \p{F}_{\p{\theta_2},M} (\p{s}) = \tens_M (\p{s}) \circ \left(F_{\p{\theta_1},M} (\p{s}) - F_{\p{\theta_2},M}(\p{s}) \right).
    \end{align}
    Now by \Cref{lem:product_lipgamma},  $\|\p{F}_{\p{\theta_1},M} -\p{F}_{\p{\theta_2},M} \|_{\Lip(\gamma-1)} \leq K\ \|\tens_M\|_{\Lip(\gamma-1)} \|F_{\p{\theta_1},M} - F_{\p{\theta_2},M} \|_{\Lip(\gamma-1)}$ for some constant $K>0$ depending only on $\gamma$, $m$, and $q$. Therefore, it suffices to show that $\|F_{\p{\theta_1},M} - F_{\p{\theta_2},M} \|_{\Lip(\gamma-1)}$ can be bounded by some multiple of $\|\theta_1-\theta_2\|$. Since the extension operator in \Cref{thm:modify_lipgamma} is linear~\cite[p. 176]{singular_integrals}, we can consider  $F_{\p{\theta_1},M} - F_{\p{\theta_2},M}$ as the extension of the restriction $(F_{\p{\theta_1}} - F_{\p{\theta_2}})|_{K_M}$ to $K_M \coloneqq \{\p{s}\in \ssol: \|s\|\leq M\}$. So, by \Cref{thm:modify_lipgamma}, we only need to bound $\|(F_{\p{\theta_1}} - F_{\p{\theta_2}})|_{K_M}\|_{\Lip(\gamma-1)}$.\medskip

   Note that $F_{\theta_1, \theta_2} \coloneqq F_{\p{\theta_1}} - F_{\p{\theta_2}} \in \LL(\ssol, \LL(\drive, \ssol))$ is a linear function, and for all $\p{s},\p{s_1},\p{s_2}\in K_M$,
      \begin{align}
        \left\|F_{\theta_1,\theta_2}(\bs)\right\| \leq M \|\theta_1 - \theta_2\| \andd \left\| F_{\theta_1,\theta_2}(\bs_1) - F_{\theta_1,\theta_2}(\bs_2)\right\| \leq \|\theta_1 - \theta_2\| \|\bs_1 - \bs_2\|.
        \end{align}
    Moreover, the 2nd and higher derivatives of $F_{\theta_1,\theta_2}$ are identically 0. Therefore, as desired,
    \begin{align}
        \|(F_{\p{\theta_1}} - F_{\p{\theta_2}})|_{K_M}\|_{\Lip(\gamma-1)}\leq \max\{1,M\} \cdot \|\theta_1-\theta_2\|.
    \end{align}

    By similar arguments as above, we have a uniform bound for $\|\p{F}_{\p{\theta}, M}\|_{\Lip(\gamma)}$ over $U$. Therefore, by applying these bounds to~\eqref{eq:rde_locally_lipschitz}, $P$ must be locally Lipschitz, and by Rademacher's theorem, $P$ must be differentiable almost everywhere. 
     \end{proof}

     \begin{proof}[Proof of \Cref{prop: non-identifiability}]
    Let $\p{Y}$ be the $\cM(\|\theta_1\|)$-solution, i.e. the solution (in the sense of \Cref{thm: universal limit theorem 2}), to~\eqref{eq: identifiability 1}. Then for all $t\in [0,T]$ we have 
\begin{align}
    \p{Y}_{t}^{(m)} = \int_{0}^t \sum_{I\in \cW([m-1]_0,q')} \lambda^I \p{Y}_{s}^{I^-} d\p{Y}_{s}^{I^f}  = \langle \lambda, \p{Y}_{t}\rangle = H(\p{Y}_t),
\end{align} 
and therefore, $\p{Y}$ will also be the $\cM(\|\theta_1\|)$-solution, and thus, the solution to~\eqref{eq: identifiability 2}. But given the bound on $\p{X}$ and by \Cref{prop: N(theta) and M(theta)}, the $\cM(\|\theta_2\|)$-solution and the solution to~\eqref{eq: identifiability 2} are the same. So, $\p{Y}$ is also the $\cM(\|\theta_2\|)$-solution to~\eqref{eq: identifiability 2}.
\end{proof}

\clearpage
\section{Figures for \Cref{sec:experiments}} \label{appendix: figures for experiments}
This section contains the figures related to \Cref{sec:experiments}.
\begin{figure}[H]
  \centering

  \begin{subfigure}{0.96\textwidth}
  \centering
  \begin{tikzpicture}[every node/.style={align=center}]
    \definecolor{color1}{RGB}{75,16,48}
    \definecolor{color2}{RGB}{234, 66, 54}
    \definecolor{color3}{RGB}{252,188,0}
    \definecolor{color4}{RGB}{0,255,0}
    \definecolor{color5}{RGB}{187,187,187}
    \definecolor{color6}{RGB}{68,190,197}

    \tikzset{
    dot/.style={circle, draw=black, minimum size=10pt, inner sep=0pt}
      }

    \node[dot, fill=color1, label=right:{$\param^1$}] at (0,0) {};
    \node[dot, fill=color2, label=right:{$\param^2$}] at (1.5,0) {};
    \node[dot, fill=color3, label=right:{$\param^3$}] at (3,0) {};
    \end{tikzpicture}  
  \end{subfigure}

  \begin{subfigure}{0.96\textwidth}
    \centering
    \includegraphics[width=0.95\textwidth]{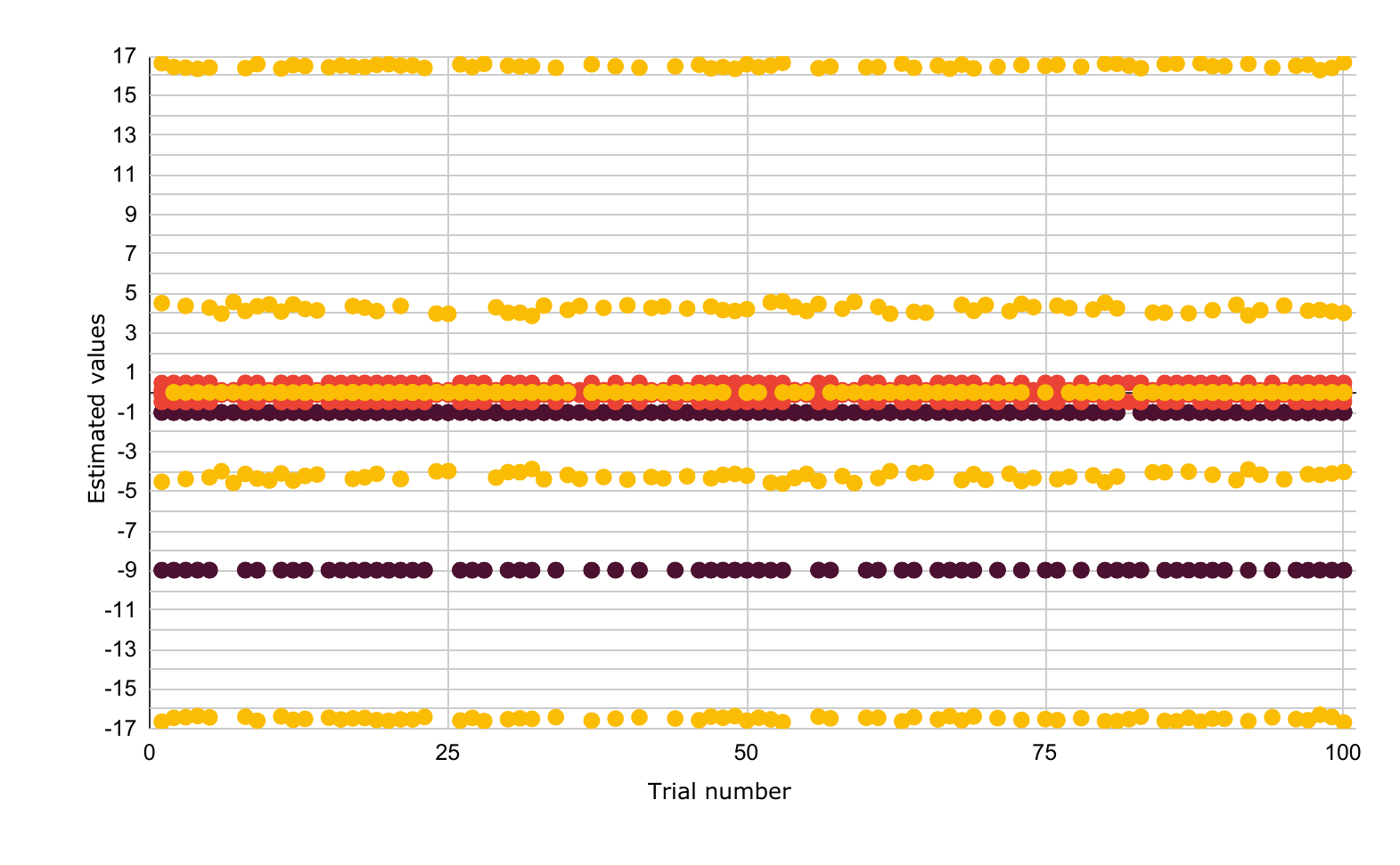}
    \vspace{-5mm}
    \caption{}
  \end{subfigure}
  ~
  
  \begin{subfigure}{0.96\textwidth}
    \centering
    \includegraphics[width=0.95\textwidth]{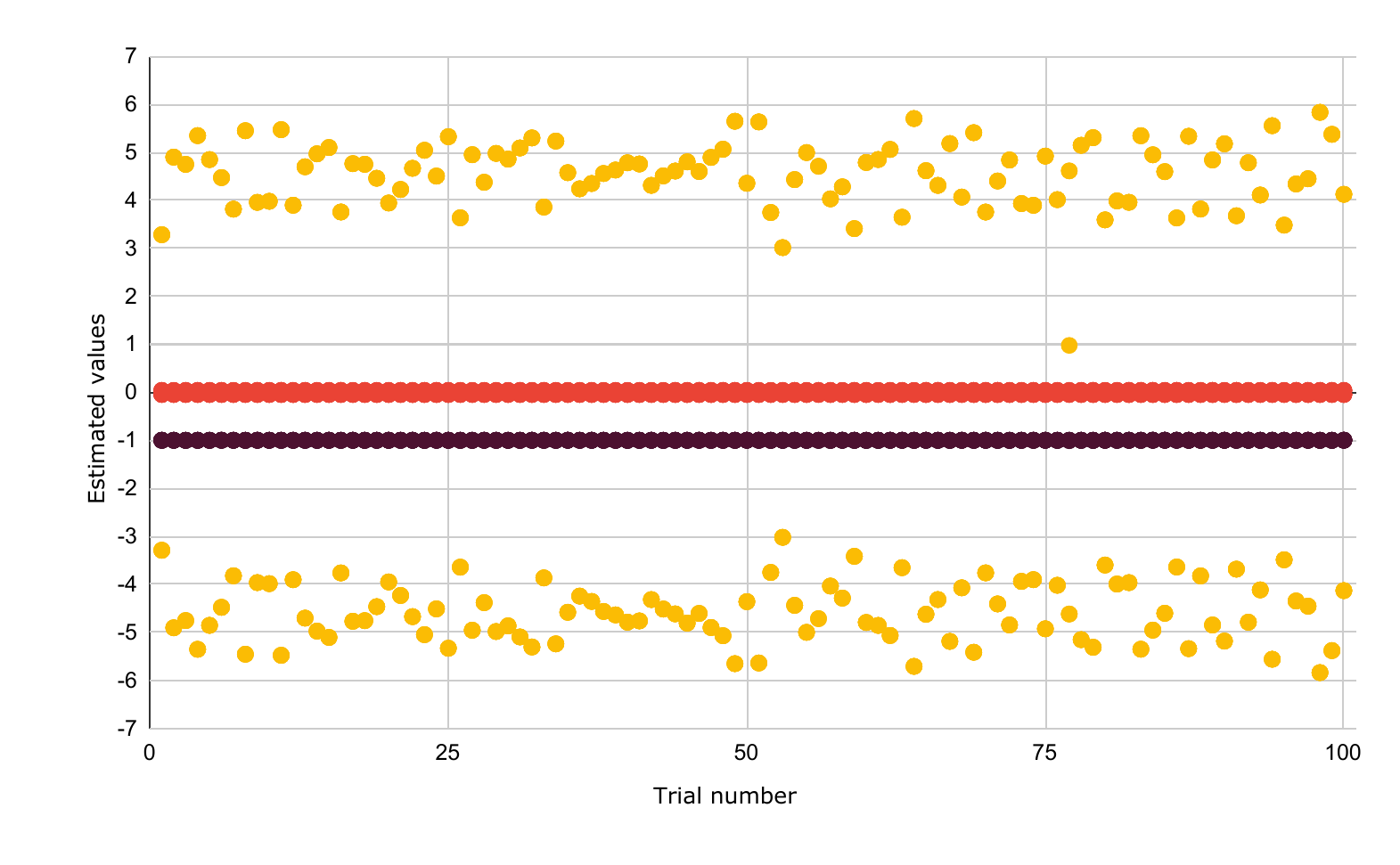}
    \vspace{-5mm}
    \caption{}
  \end{subfigure}
  ~

    \caption{The components of all the real solutions to the polynomial system~\eqref{eq: expected signature mathcing method - modified} over 100 trials in \Cref{experiment: 1 dimensional} corresponding to the sets of words (a) $\cW_1$ and (b) $\cW_2$, defined in~\eqref{eq: set of words 1}.}
    \label{fig: exp 1-all}
\end{figure}

\begin{figure}[H]
  \centering

  \begin{subfigure}{0.96\textwidth}
  \centering
  \begin{tikzpicture}[every node/.style={align=center}]
    \definecolor{color1}{RGB}{75,16,48}
    \definecolor{color2}{RGB}{234, 66, 54}
    \definecolor{color3}{RGB}{252,188,0}
    \definecolor{color4}{RGB}{0,255,0}
    \definecolor{color5}{RGB}{187,187,187}
    \definecolor{color6}{RGB}{68,190,197}

    \tikzset{
    dot/.style={circle, draw=black, minimum size=10pt, inner sep=0pt}
      }

    \node[dot, fill=color1, label=right:{$\param^1$}] at (0,0) {};
    \node[dot, fill=color2, label=right:{$\param^2$}] at (1.5,0) {};
    \node[dot, fill=color3, label=right:{$\param^3$}] at (3,0) {};
    \node[dot, fill=color4, label=right:{$\param^4$}] at (4.5,0) {};
    \node[dot, fill=color5, label=right:{$\param^5$}] at (6,0) {};
    \end{tikzpicture}  
  \end{subfigure}

  \begin{subfigure}{0.96\textwidth}
    \centering
    \includegraphics[width=0.95\textwidth]{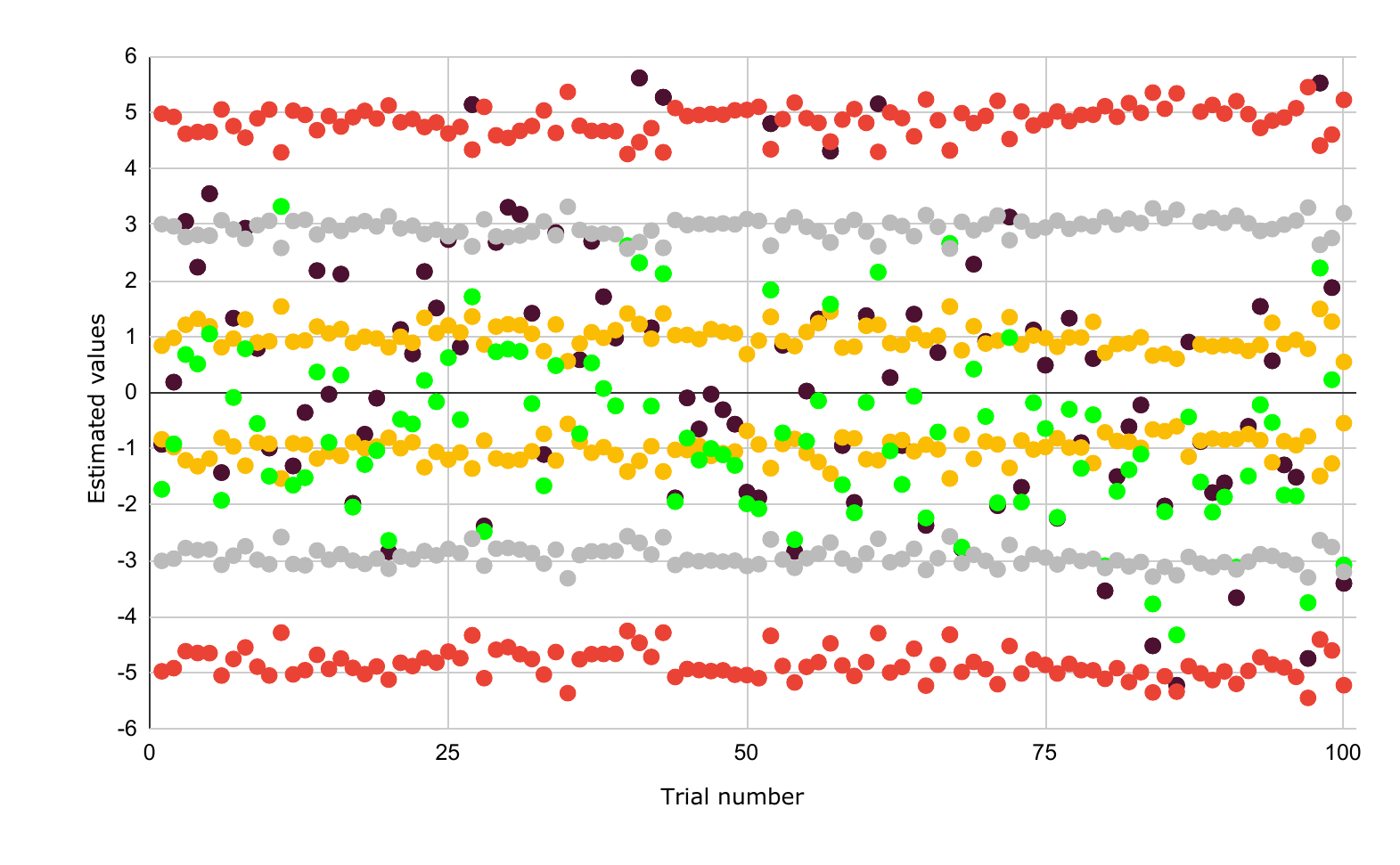}
    \vspace{-5mm}
    \caption{}
  \end{subfigure}
  ~
  
  \begin{subfigure}{0.96\textwidth}
    \centering
    \includegraphics[width=0.95\textwidth]{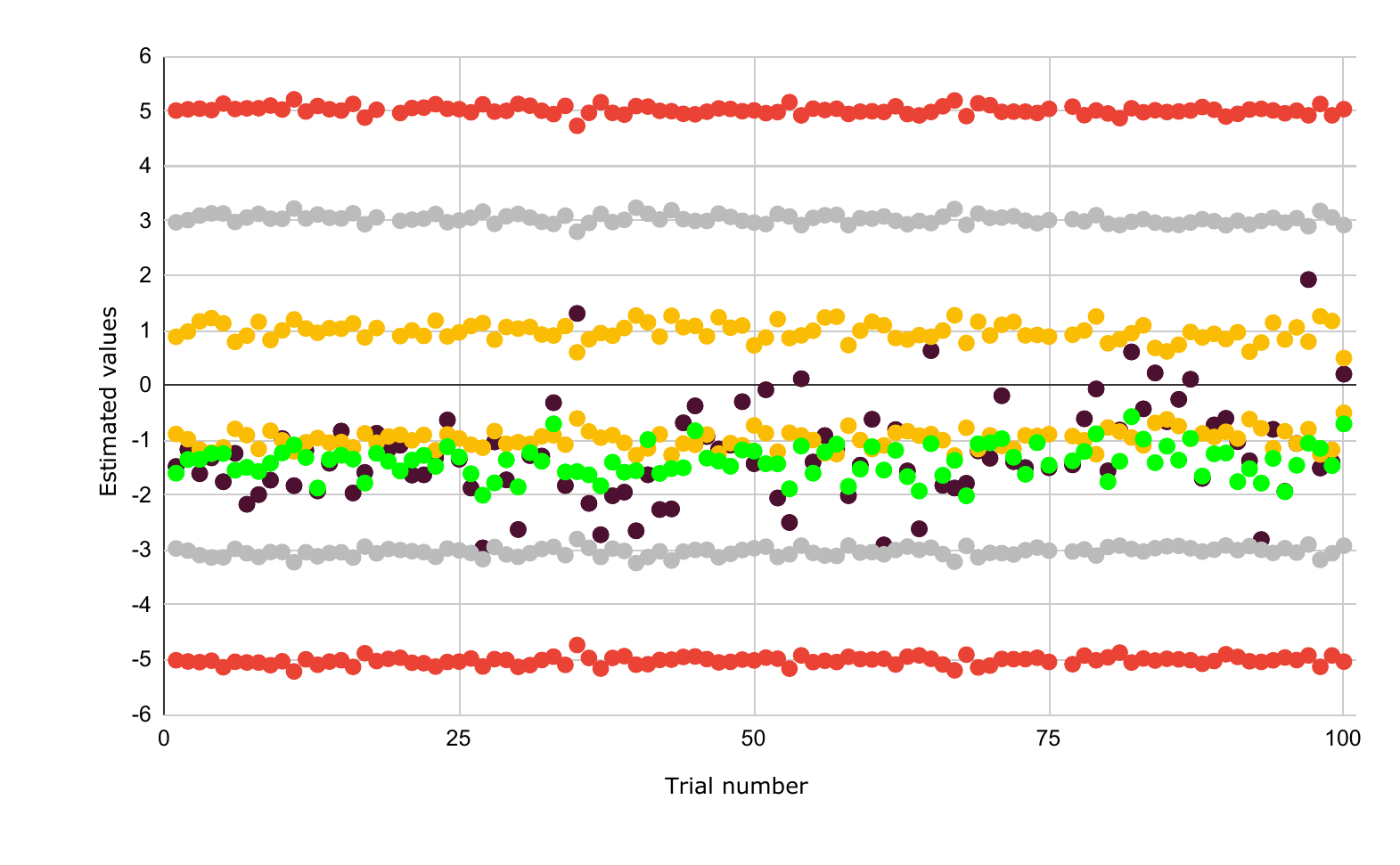}
    \vspace{-5mm}
    \caption{}
  \end{subfigure}
  ~

    \caption{The components of all the real solutions to the polynomial system~\eqref{eq: expected signature mathcing method - modified} over 100 trials in \Cref{experiment: 2 dimensional} corresponding to the sets of words (a) $\cW_1$ and (b) $\cW_2$, defined in~\eqref{eq: set of words 2}.}
    \label{fig: exp 2-all}
\end{figure}

\begin{figure}[H]
  \centering

  \begin{subfigure}{0.96\textwidth}
  \centering
  \begin{tikzpicture}[every node/.style={align=center}]
    \definecolor{color1}{RGB}{75,16,48}
    \definecolor{color2}{RGB}{234, 66, 54}
    \definecolor{color3}{RGB}{252,188,0}
    \definecolor{color4}{RGB}{0,255,0}
    \definecolor{color5}{RGB}{187,187,187}
    \definecolor{color6}{RGB}{68,190,197}

    \tikzset{
    dot/.style={circle, draw=black, minimum size=10pt, inner sep=0pt}
      }

    \node[dot, fill=color1, label=right:{$\param^1$}] at (0,0) {};
    \node[dot, fill=color2, label=right:{$\param^2$}] at (1.5,0) {};
    \node[dot, fill=color3, label=right:{$\param^3$}] at (3,0) {};
    \node[dot, fill=color4, label=right:{$\param^4$}] at (4.5,0) {};
    \node[dot, fill=color5, label=right:{$\param^5$}] at (6,0) {};
    \node[dot, fill=color6, label=right:{$\param^6$}] at (7.5,0) {};
    \end{tikzpicture}  
  \end{subfigure}

  \begin{subfigure}{0.96\textwidth}
    \centering
    \includegraphics[width=0.95\textwidth]{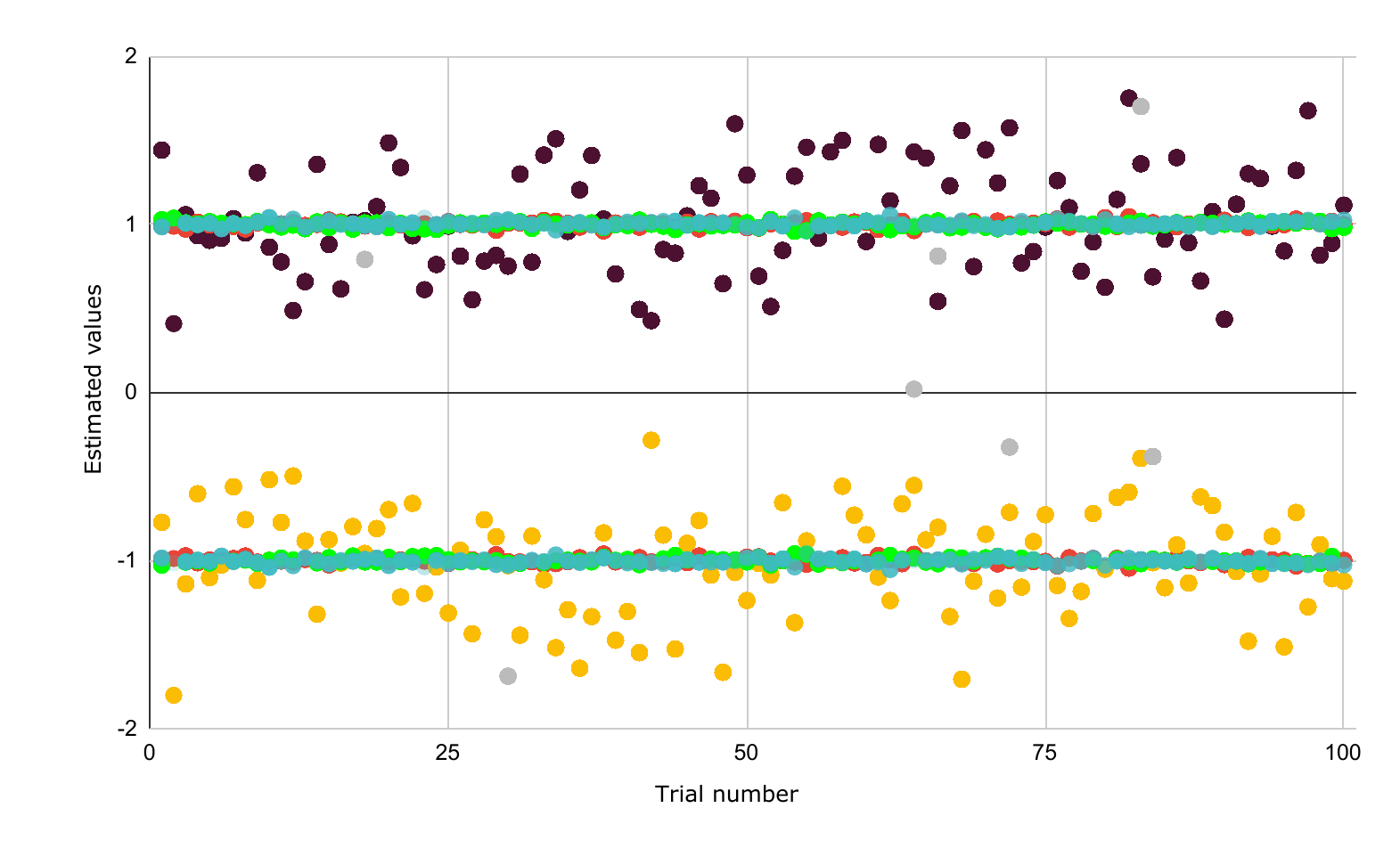}
    \vspace{-5mm}
    \caption{}
  \end{subfigure}
  ~
  
  \begin{subfigure}{0.96\textwidth}
    \centering
    \includegraphics[width=0.95\textwidth]{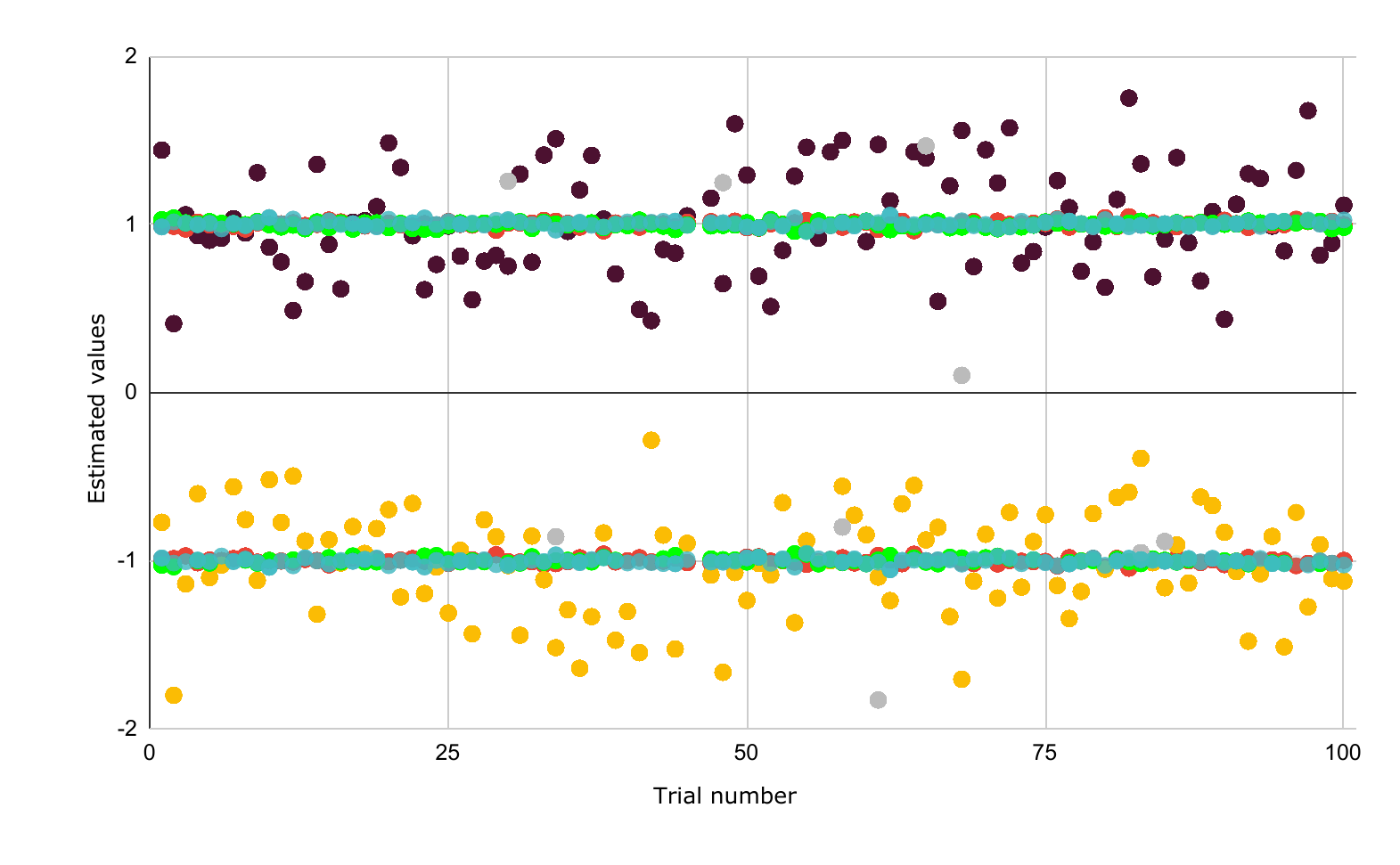}
    \vspace{-5mm}
    \caption{}
  \end{subfigure}
  ~

    \caption{The components of all the real solutions to the polynomial system~\eqref{eq: expected signature mathcing method - modified} over 100 trials in \Cref{experiment: 3 dimensional} corresponding to the sets of words (a) $\cW_5$ and (b) $\cW_6$, defined in~\eqref{eq: set of words 3}. Only a few of the estimated values for $\param^5$ fall within the interval $[-2,2]$, and thus, the rest are not visible in these plots. In each trial, many of the estimated values for $\param^2$, $\param^4$, and $\param^6$ nearly coincide.} 
    \label{fig: exp 3-all}
\end{figure}

\bibliographystyle{plain}
\bibliography{biblio}

\end{document}